\newcommand\rp{'}
\newcommand\dlp{``}
\newcommand\drp{''}
\newcommand\umlaut{\"}
\theoremstyle{plain}
\newtheorem{theorem}{Theorem}[section]
\newtheorem{corollary}[theorem]{Corollary}
\newtheorem{lemma}[theorem]{Lemma}
\newtheorem{proposition}[theorem]{Proposition}
\theoremstyle{definition}
\newtheorem{definition}[theorem]{Definition}
\theoremstyle{remark}
\newtheorem{remark}[theorem]{Remark}
\newtheorem{example}[theorem]{Example}
\newcommand{\larray}{\left(\begin{array}{cc}\right.}
\newcommand{\rarray}{\right.\end{array}\right)}
\DeclareMathOperator{\Hom}{Hom}
\DeclareMathOperator{\HC}{HC}
\DeclareMathOperator{\Hd}{HH}
\DeclareMathOperator{\HP}{HP}
\newcommand\conj[1]{\langle #1\rangle}
\newcommand\Spec{\operatorname{Spec}}
\newcommand\Prim{\operatorname{Prim}}
\newcommand\Tt{a_0 \otimes a_1 \otimes \ldots \otimes a_n}
\newcommand{\bdsplit}{\begin{displaymath}
\begin{split}}
\newcommand{\edsplit}{\end{split}
\end{displaymath}}
\newcommand{\beqn}{\begin{equation}}
\newcommand{\eeqn}{\end{equation}}
\newcommand{\bsplit}{\begin{split}}
\newcommand{\esplit}{\end{split}}
\newcommand{\calb}{\mathcal{B}}
\newcommand{\calc}{\mathcal{C}}
\newcommand{\calo}{\mathcal{O}}
\newcommand\CI{\mathcal{C}\sp{\infty}}
\newcommand\Tor{\operatorname{Tor}}
\newcommand\Hc{\operatorname{HC}}
\newcommand\Hp{\operatorname{HP}}
\newcommand\tHp{\operatorname{HP}^{\topo}}
\newcommand\tHc{\operatorname{HC}^{\topo}}
\newcommand\tHd{\operatorname{HH}^{\topo}}
\newcommand\topo{\operatorname{top}}
\newcommand\opp{\operatorname{op}}
\newcommand\mfk{\mathfrak}
\newcommand\pa{\partial}
\newcommand\supp{\operatorname{supp}}
\newcommand\cohom{\operatorname{H}}
\newcommand\Aut{\operatorname{Aut}}
\newcommand\CC{\mathbb C}
\newcommand\KK{\mathbb K}
\newcommand\kk{\mathbf k}
\newcommand\TT{\mathbb T}
\newcommand\ZZ{\mathbb Z}
\begin{document}
\title[Crossed product algebras]%
{The periodic cyclic homology of crossed products of
finite type algebras}
\author[J. Brodzki]{Jacek Brodzki}
\address{Deparment of Mathematical Sciences 
University of Southampton 
Southampton SO17 1BJ U.K.}
\email{j.brodzki@soton.ac.uk}

\author[S. Dave]{Shantanu Dave} \address{Wolfgang Pauli Institute,
  Vienna, 1090 Austria} \email{shantanu.dave@univie.ac.at}

\author[V. Nistor]{Victor Nistor}
\address{ D\'epartement de Math\'ematiques, Universit\'{e} de
  Lorraine, 57045 METZ, France
%
%
and Inst. Math. Romanian Acad.  PO BOX 1-764, 014700 Bucharest
Romania} 
\email{victor.nistor@univ-lorraine.fr}

\thanks{J.~B.~was partially supported by an EPSRC grant EP/I016945/1. S. D. was supported by grant P 24420-N25 of the
  Austrian Science Fund (FWF). V. N. was supported in part by ANR-14-CE25-0012-01
  (SINGSTAR)\\ {\em Key words:} cyclic cohomology, cross products, noncommutative geometry, 
  homological algebra, Koszul complex, affine variety, finite type algebra, group}

\begin{abstract}
We study the periodic cyclic homology groups of the cross-product of a
finite type algebra $A$ by a discrete group $\Gamma$. In case $A$ is
commutative and $\Gamma$ is finite, our results are complete and given
in terms of the singular cohomology of the sets of fixed
points. These groups identify our cyclic homology groups with the 
``orbifold cohomology'' of the underlying (algebraic) orbifold. The
proof is based on a careful study of localization at fixed points and
of the resulting Koszul complexes. We provide examples of Azumaya
algebras for which this identification is, however, no longer
valid. As an example, we discuss some affine Weyl groups.
\end{abstract}
\maketitle

\date{\today}
\tableofcontents

\section*{Introduction}

Let $A$ be an algebra and let $\Gamma$ be a (discrete) group acting on
it by a morphism $\alpha : \Gamma \to \Aut(A)$, where $\Aut(A)$
denotes the group of automorphisms of $A$. Then, to this action of
$\Gamma$ on $A$, we can associate the crossed product algebra $A
\rtimes \Gamma$ consisting of finite sums of elements of the form $a
\gamma$, $a \in A, ~\gamma\in \Gamma$, subject to the relations
\begin{equation}
  \gamma a \, = \, \alpha_{\gamma} (a) \gamma \,.
\end{equation}
Crossed products appear often in algebra and analysis and can be used
to model several geometric structures. They play a fundamental role in
non-commutative geometry \cite{ConnesBook}. For instance,
cross-products can be used to recover equivariant $K$-theory
\cite{BaumConnes2, BaumConnes1, ConnesSkandalis, 
FeiginTsygan, Julg}.

Cyclic homology is a homological theory for algebras that can be used
to recover the de-Rham cohomology of a smooth, compact manifold $M$ as
the periodic cyclic homology of the algebra $\CI(M)$ of smooth
functions on $M$ \cite{ConnesIHES} (see also \cite{ConnesBook,
  Karoubi,  LodayBook, LodayQuillen, ManinNCG, Tsygan}). In this paper,
we are interested in the algebraic counterpart of this result
\cite{FeiginTsygan} (see also \cite{BurgheleaComm, EmmanouilCR,
  Emmanouil, Farinati, Lorenz}), in view of its connections with the representation
theory of reductive $p$-adic groups (see Section \ref{sec.weyl} for
references and further comments), and we provide some more general
results as well.

Let us assume that our algebra $A$ is an algebra over a ring $\kk$. So
$1 \in \kk$, but $A$ is not required to have a unit. Nevertheless, for
simplicity, in this paper, we shall restrict to the case when $A$ has
a unit.  Let us assume that the group $\Gamma$ acts in a compatible
way on both the algebra $A$ and the ring $\kk$. Using $\alpha$ to
denote the action on $\kk$ as well, this means that
\begin{equation*}
   \alpha_\gamma(fa) \, = \, \alpha_\gamma(f) \alpha_\gamma(a)
   \,, \quad \mbox{for all }\ f \in \kk \ \mbox{ and }\ a \in A \,.
\end{equation*}

In this paper we study the Hochschild, cyclic, and periodic cyclic
homology groups of $A \rtimes \Gamma$ using the additional information
provided by the action of $\Gamma$ on $\kk$. We begin with some
general results and then particularize, first, to the case when $A$ is
a finite type algebra over $\kk$ and then, further, to the case when
$A = \kk$. (Recall \cite{KazhdanNistorSchneider} that $A$ is a finite
type algebra over $\kk$ if $A$ is a $\kk$-algebra, $\kk$ is a quotient
of a polynomial ring, and $A$ is finitely generated as a
$\kk$-module.)

For $\gamma \in \Gamma$, let us denote by $\conj{\gamma}$ the
conjugacy class of $\gamma \in \Gamma$ and by $\conj{\Gamma}$ the set
of conjugacy classes of of $\Gamma$. Our first step is to use the
decomposition
\begin{equation}\label{eq.decomp}
 \Hd_q(A \rtimes \Gamma) \, \simeq \, \bigoplus_{\conj{\gamma} \in
   \conj{\Gamma}} \Hd_q(A \rtimes \Gamma)_{\gamma}
\end{equation}
of the Hochschild homology groups of $A \rtimes \Gamma$
\cite{BaumConnes2, BaumConnes1, Brylinski, FeiginTsygan, Karoubi, nistorInvent90}.  
Let $\Prim(\kk)$ be the maximal ideal spectrum of
$\kk$ and denote by $\supp(A) \subset \Prim(\kk)$ the support of
$A$. A first observation is that, if $\gamma \in \Gamma$ is such
$\gamma$ has no fixed points on $\supp(A)$, then the component
corresponding to $\gamma$ in the direct sum decomposition, Equation
\ref{eq.decomp}, vanishes, that is $\Hd_q(A \rtimes \Gamma)_{\gamma} =
0$ for all $q$. Consequently, we also have $\HC_q(A \rtimes
\Gamma)_{\gamma} = \HP_q(A \rtimes \Gamma)_{\gamma} = 0$ for all
$q$. More generally, this allows us to show that the groups $\Hd_q(A
\rtimes \Gamma)_{\gamma} $, $\HC_q(A \rtimes \Gamma)_{\gamma}$, and
$\HP_q(A \rtimes \Gamma)_{\gamma}$ are supported at the ideals
fixed by $\gamma$.

We are especially interested in the case $A = \calo[V]$, the algebra
of regular functions on an affine algebraic variety $V$, in which we
obtain complete results, identifying the periodic cyclic homology
groups with the corresponding orbifold homology groups \cite{Ruan}.
More precisely, we obtain the following result.

\begin{theorem}\label{cor.cross2}\
Let $A$ be a quotient of the ring of polynomials $\CC[X_1, X_2, \ldots
  , X_n]$ and let $\Gamma$ be a finite group acting on $A$ by
automorphisms. Let $\{\gamma_1 , \dots , \gamma_\ell\}$ be a list of
representatives of conjugacy classes of $\Gamma$, let $C_j$ be the
centralizer of $\gamma_j$ in $\Gamma$, and let $V_j \subset V$ be the set of fixed
points of $\gamma_j$ acting on the subset $V \subset \CC\sp{n}$
corresponding to set of maximal ideals of $A$.  Then
\begin{equation*}
  \Hp_q(A \rtimes \Gamma) \ \cong \ \bigoplus_{j=1}^\ell \,
  \bigoplus_{k \in \ZZ} \, H^{q-2k}(V_{j}; \CC)^{C_{j}} \, .
\end{equation*}
\end{theorem}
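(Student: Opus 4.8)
The plan is to reduce everything to the commutative case by means of the conjugacy-class decomposition of Equation \eqref{eq.decomp}, and then to analyze each summand by localizing at its fixed-point locus through a Koszul complex. First I would record that the decomposition \eqref{eq.decomp} is compatible with the Connes $B$-operator and with the full cyclic bicomplex, so that it passes to cyclic and to periodic cyclic homology. This yields
\[
  \Hp_q(A \rtimes \Gamma) \ \cong \ \bigoplus_{\conj{\gamma} \in \conj{\Gamma}} \Hp_q(A \rtimes \Gamma)_{\gamma},
\]
and it reduces the theorem to understanding a single conjugacy class at a time, followed by reassembly over the representatives $\gamma_1, \dots, \gamma_\ell$.

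Next I would rewrite each individual summand. Each $\gamma$-component carries a natural mixed-complex structure whose underlying Hochschild homology is the centralizer-(co)invariants of the twisted homology $\Hd_*(A, A_\gamma)$, where $A_\gamma$ denotes $A$ equipped with the bimodule structure $a \cdot m \cdot b = a\, m\, \alpha_\gamma(b)$. Because $\Gamma$ is finite and we work over $\CC$, an averaging (Maschke) argument identifies coinvariants with invariants, so it suffices to compute the twisted groups $\Hd_*(A, A_\gamma)$ together with their $C_\gamma$-action and then take $C_\gamma$-invariants. Here the support statement anticipated in the introduction enters: since $\gamma$ has no fixed points on $\supp(A)$ away from $V_\gamma$, the twisted homology is supported on the fixed-point subvariety $V_\gamma$.

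To make this precise I would compute $\Hd_*(A, A_\gamma)$ by a Koszul resolution. Choosing generators $X_1, \dots, X_n$, the standard Koszul resolution of $A$ over $A \otimes A$, tensored with $A_\gamma$, produces a complex on $\bigwedge^* A^n$ whose differentials are controlled by the operators $X_i - \alpha_\gamma(X_i)$, which cut out exactly $V_\gamma$. Since $\gamma$ has finite order it is diagonalizable over $\CC$, and in the directions normal to $V_\gamma$ the operator $1-\gamma$ is invertible; the corresponding Koszul differentials are therefore isomorphisms and those directions are acyclic. The surviving complex is the localized mixed complex of $\calo[V_\gamma]$, so that passing to periodic cyclic homology gives $\Hp_q(A \rtimes \Gamma)_\gamma \cong \bigl(\Hp_q(\calo[V_\gamma])\bigr)^{C_\gamma}$. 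Applying the Feigin--Tsygan computation of periodic cyclic homology for a commutative finite-type $\CC$-algebra, together with Grothendieck's comparison of algebraic de Rham and singular cohomology, converts this into $\bigoplus_{k} H^{q-2k}(V_\gamma; \CC)^{C_\gamma}$; summing over the representatives $\gamma_j$ produces the claimed formula.

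The hard part will be the Koszul analysis when $A$ is a proper (possibly singular) quotient of the polynomial ring. In that case the sequence $(X_i - \alpha_\gamma(X_i))$ need no longer be regular, $V_\gamma$ may be singular so that the Hochschild--Kostant--Rosenberg identification with differential forms is unavailable, and the naive resolution collapses. I would instead work from a $\Gamma$-equivariant presentation, localize at the maximal ideals fixed by $\gamma$, and control the homology of the resulting Koszul complexes there, using the invertibility of $1-\gamma$ in the normal directions to isolate the contribution of $\calo[V_\gamma]$. The second delicate point is to carry the \emph{cyclic} structure, and not merely the Hochschild homology, through this localization, so that the identification genuinely survives to $\Hp$ and is compatible with taking $C_\gamma$-invariants; this is what ultimately licenses the direct appeal to the Feigin--Tsygan theorem for $\calo[V_\gamma]$.
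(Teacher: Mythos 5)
There is a genuine gap at the heart of your argument: the claim that, after localizing at the fixed points, ``the surviving complex is the localized mixed complex of $\calo[V_\gamma]$.'' This is false at the Hochschild level when $V$ is singular, and your proposal contains no mechanism to repair it at the level of $\Hp$. Concretely, take $A = \CC[x,y]/(xy)$ and let $\gamma$ swap $x$ and $y$. Then $V^\gamma$ is the origin, so $\Hd_0(\calo[V^\gamma]) = \CC$; but a direct computation with the twisted differential $b_\gamma(a_0\otimes a_1) = a_0\gamma(a_1) - a_1 a_0$ gives $\Hd_0(A,\gamma) \cong \CC[x]/(x^2)$, which is two-dimensional. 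So the twisted complex of a singular algebra is \emph{not} quasi-isomorphic to the complex of its fixed-point algebra, even after localization; support on $V_\gamma$ (which you correctly deduce, as in Proposition \ref{prop.support}) only constrains where the homology lives, not what it is. The surplus (here the nilpotent class of $x$) dies only in periodic cyclic homology, and killing it requires a nilpotent-invariance argument of Goodwillie type, not a Koszul computation. Your own Koszul step also cannot get started for singular $A$, since the Koszul complex on generators is then not a resolution of $A$ over $A\otimes A$ --- you flag this as ``the hard part'' but the proposed fix (localize and use invertibility of $1-\gamma$ in normal directions) runs exactly into the counterexample above.

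The paper's proof circumvents all of this by never computing twisted Hochschild homology of a singular algebra. First, Goodwillie's theorem (Theorem \ref{Goodwillie}) removes the nilradical of $A$, reducing to $A = \calo[V]$ reduced. Then one chooses a $\Gamma$-equivariant closed embedding $V \subset E$ into a \emph{smooth} affine space with defining ideal $I$, and applies Seibt's theorem (Theorem \ref{Seibt}) to identify $\Hp_*(\calo[V]\rtimes\Gamma)$ with the topological periodic cyclic homology of the $I$-adic completion of $\calo[E]\rtimes\Gamma$. The Koszul/HKR analysis (your second paragraph) is then carried out entirely on the smooth ambient space: the completed twisted HKR isomorphism (Theorem \ref{thm.deRham}) identifies the $\gamma$-component with completed differential forms $\widehat\Omega^*(E^\gamma)$ along $V_\gamma$, and the singular cohomology $H^*(V_\gamma;\CC)$ finally appears as the cohomology of this \emph{formal} de Rham complex (the Feigin--Tsygan/Hartshorne comparison), not as $\Hp_*(\calo[V_\gamma])$ computed directly. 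The completion step is also what makes the passage from Hochschild to periodic cyclic homology legitimate (via Lemma \ref{lemma.qi.complete} and the filtered mixed-complex structure), which is the second delicate point you name but leave unresolved. Without the embedding-plus-completion (or some equivalent invariance argument), your reduction to the Feigin--Tsygan theorem for $\calo[V_\gamma]$ does not follow.
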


We refer to \cite{Baranovsky, DolgushevAND, Maszczyk, Posthuma2011} for the
definition of orbifold cohomology groups in the category of smooth
manifolds and to its connections with cyclic homology. We are
interested in the case $A = \calo[V]$ due to its connections with
affine Weyl groups.

For simplicity, we shall consider from now on only complex algebras.
Thus $\kk$ and $A$ are complex vector spaces in a compatible way.  The
paper is organized as follows. In the first section we recall the
definitions of Hochschild and cyclic homologies and of their twisted
versions with respect to the action of an endomorphism. Then we
specialize these groups to cross products and discuss the connection
between the Hochschild and cyclic homologies of a crossed product and
their twisted versions. Our calculations are based on completions with
respect to ideals, so we discuss them in the last subsection of the
first section. The main results are contained in the second
section. They are based on a thorough study of the twisted Hochschild
homology of a finitely generated commutative complex algebra using
Koszul complexes. As an example, in the last section, we recover the 
cyclic homology of certain group algebras of certain affine Weyl groups.

\subsection*{Acknowledgements} 
We thank Mircea Musta\c{t}a, Tomasz Maszczyk, Roger Plymen, and 
Hessel Posthuma for useful discussions.

\section{Basic definitions}

We review in this section several constructions and results needed in
what follows. Thus we introduce the operators $b$ and $B$ needed to
define our various version of the Hochschild and cyclic homologies
that we will use.  We also recall results on the topological (or
$I$-adic) versions of these groups, which are less common and reduce
to the usual definitions when $I = 0$.

\subsection{Hochschild and cyclic homology\label{Sec2}}

Let $A$ be a complex algebra with unit and denote by $A^{\opp}$ the
algebra $A$ with the opposite multiplication and $A^e := A \otimes
A^{\opp}$, so that $A$ becomes a left $A^e$--module with $(a_0 \otimes
a_1) \cdot a := a_0 a a_1$.

Let $g$ be an endomorphism of $A$ and define for $x = a_0
\otimes a_1 \otimes \ldots \otimes a_n \in A\sp{\otimes n+1}$
\begin{equation}\label{eq.def.bg}
\begin{gathered}
    b_g\rp(x) :\, = \, a_0 g(a_1) \otimes a_2 \otimes \ldots \otimes
    a_n + \, \sum_{i=1}^{n-1} \, (-1)^i a_0 \otimes \ldots \otimes a_i
    a_{i+1} \otimes \ldots \otimes a_n \\ \mbox{and } \ \ b_g(x) \, :=
    \, b_g\rp (x) + \, (-1)\sp{n} a_n a_0 \otimes a_1 \otimes \ldots
    \otimes a_{n-1}\,.
\end{gathered}
\end{equation}
Let us denote by $\calb_q\rp(A) := A \otimes (A/\CC 1)\sp{\otimes q}
\otimes A$ and $\calb_q(A) := A \otimes (A/\CC 1)\sp{\otimes q}$, $q
\ge 0$.  Then $b_g\rp$ and $b_g$ descend to maps $b_g\rp :
\calb_q\rp(A) \to \calb_{q-1}\rp(A)$ and $b_g : \calb_q(A) \to
\calb_{q-1}(A)$.  As usual, when $g$ is the identity, we write simply
$b$ instead of $b_g$ and $b\rp$ instead of $b_g\rp$.  We then define

\begin{definition}\label{def.H.twisted}
The {\em $g$-twisted Hochschild homology groups} $\Hd_*(A, g)$ of $A$
are the homology groups of the complex $(\calb_*(A), b_g) =
(\calb_q(A), b_g)_{q \ge 0}$.
\end{definition}

Thus, when $g$ is the identity, and hence $b_g = b$, we obtain that
$\Hd_*(A, g)$ are simply the usual Hochshild homology $\Hd_*(A)$
groups of $A$.

Let $A_g$ be the $A\sp{e}$ {\em right-module} $a(a_1 \otimes a_2) :=
a_2 a g(a_1)$. By tensoring the complex $(\calb_q\rp (A), b\rp)_{q \ge
  1}$ with $A_g$ over $A\sp{e}$ we obtain the complex $(\calb_q(A),
b_g)_{q \ge 0}$.  Since the complex $(\calb_q \rp (A), b')$ is
acyclic--and thus provides a projective resolution of $A$ with $A^e :=
A \otimes A^{\opp}$ modules--we obtain \cite{Brylinski, FeiginTsygan,
  LodayBook, nistorInvent90, Stefan}

\begin{proposition}\label{prop.isom.gt} 
For every $q \ge 0$, we have a natural isomorphism
\begin{equation}
  \Hd_q(A, g) \, \simeq \, \Tor_q^{A^e}(A_g,A) \;.
\end{equation}
\end{proposition}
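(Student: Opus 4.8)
The plan is to exhibit $\Hd_*(A,g)$ as the homology of the complex obtained by applying $A_g \otimes_{A^e} (-)$ to the normalized bar resolution, and then to invoke the defining description of $\Tor$ through projective resolutions.

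First I would record that $(\calb_*\rp(A), b\rp)$ is a projective resolution of $A$ as a left $A^e$-module. Each term $\calb_q\rp(A) = A \otimes (A/\CC 1)^{\otimes q} \otimes A$ is isomorphic to $A^e \otimes (A/\CC 1)^{\otimes q}$ as a left $A^e$-module and is therefore free; together with the augmentation given by multiplication $\calb_0\rp(A) = A \otimes A \to A$ and the acyclicity asserted above, this exhibits $\calb_*\rp(A) \to A$ as a free resolution. Consequently, by the very definition of $\Tor$ through a projective resolution of $A$ as a left $A^e$-module, $\Tor_q^{A^e}(A_g, A)$ is the $q$-th homology group of the complex $A_g \otimes_{A^e} \calb_*\rp(A)$.

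The core step is to identify this complex with $(\calb_*(A), b_g)$. On the level of modules, using the right $A^e$-action $\xi \cdot (a_1 \otimes a_2) = a_2\,\xi\,g(a_1)$ on $A_g$ together with the freeness above, one pushes the two outer tensor factors of $\calb_q\rp(A)$ into the $A_g$ slot: the assignment
\[
  \xi \otimes (a_0 \otimes a_1 \otimes \cdots \otimes a_q \otimes a_{q+1}) \ \longmapsto \ a_{q+1}\,\xi\,g(a_0) \otimes (a_1 \otimes \cdots \otimes a_q)
\]
is a well-defined isomorphism $A_g \otimes_{A^e} \calb_q\rp(A) \xrightarrow{\ \sim\ } A \otimes (A/\CC 1)^{\otimes q} = \calb_q(A)$, with inverse $\eta \otimes (a_1 \otimes \cdots \otimes a_q) \mapsto \eta \otimes (1 \otimes a_1 \otimes \cdots \otimes a_q \otimes 1)$. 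I would then transport the differential $\operatorname{id}_{A_g} \otimes b\rp$ through this isomorphism and check term by term that it equals $b_g$: the interior face maps reproduce the middle summands of $b_g$ verbatim (using $g(1)=1$), the leading face produces the term $a_0 g(a_1) \otimes a_2 \otimes \cdots \otimes a_q$ — this is precisely where the endomorphism $g$ enters, through the right action on $A_g$ — and the last face, after pushing the trailing factor across the balanced tensor product, yields the cyclic term $(-1)^q a_q a_0 \otimes a_1 \otimes \cdots \otimes a_{q-1}$. This matches Equation (\ref{eq.def.bg}) exactly.

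I expect the only real work to lie in this last verification: keeping the signs and the index shifts straight, and confirming that the twist by $g$ attaches to the correct tensor slot while the wrap-around face contributes $(-1)^q a_q a_0 \otimes \cdots$ rather than a term carrying a spurious copy of $g$. Naturality of the resulting isomorphism in the pair $(A,g)$ is then automatic, since both the bar resolution $\calb_*\rp(A)$ and the functor $A_g \otimes_{A^e}(-)$ are functorial with respect to $g$-equivariant algebra homomorphisms, and $\Tor$ is functorial by construction.
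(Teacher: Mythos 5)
Your proposal is correct and follows essentially the same route as the paper: both identify $A_g \otimes_{A^e} (\calb_*\rp(A), b\rp)$ with $(\calb_*(A), b_g)$ and then use the fact that the normalized bar complex is a projective (indeed free) resolution of $A$ over $A^e$ to conclude via the definition of $\Tor$. The only difference is one of detail, not of method: you write out explicitly the isomorphism $A_g \otimes_{A^e} \calb_q\rp(A) \cong \calb_q(A)$ and the term-by-term verification that the induced differential is $b_g$, which the paper leaves as an implicit computation.
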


See \cite{BohmStefan, gabriel13} for some generalizations.
Let us assume now that $A$ is an algebra over a (commutative) ring
$\kk$.  Then we see by a direct calculation that the differential
$b_g$ is $\kk$--linear, and hence each $\Hd_q(A, g)$ is naturally a
$\kk$--module.  Moreover, both $A$ and $A_g$ are $\kk \otimes
\kk$-modules:
\begin{equation}\label{eq.def.tms}
 (z_1 \otimes z_2) a \, := \, g(z_1) z_2 a \, , \ \mbox{ for } z_1,
  z_2 \in \kk \mbox{ and } a \in A_g \,.
\end{equation}
Consequently, $\Tor_q^{A^e}(A_g,A)$ is naturally a $\kk \otimes
\kk$-module, with the action of $z_1 \otimes z_2 \in \kk \otimes \kk$
on either component of the $\Tor$-group being the same.  Examining the
proof of Proposition \ref{prop.isom.gt}, we see that the isomorphism
$\Hd_q(A, g) \, \simeq \, \Tor_q^{A^e}(A_g,A)$ of that proposition is
compatible with these module structures. More related information is
contained in the following results. We first need to recall some basic
definitions.

Let $R$ be a ring (all our rings have units). We assume $\CC \subset
R$, for simplicity (that\rp s the only case we need anyway). We denote
by $\widehat R$ the set of maximal ideals $\mfk m$ of $R$ with the
Jacobson topology.  We assume that $R/\mfk m \simeq \CC$ as
$\CC$-algebras for any maximal ideal ${\mfk m}$.  Recall that if $M$
is a module over a ring $R$ (all our rings have units), then the {\em
  support} of $M$ is the set of maximal ideals $\mfk m \in \widehat R$
such that the localization $M_{\mfk m} := A_{\mfk m} M$ is
nonzero. Thus we have that $M_{\mfk m} = 0$ if, and only if, for every
$m \in M$, there exists $x \notin \mfk m$ such that $x m = 0$.

Returning to our setting, let $\mfk m, \mfk n \in \Spec(\kk)$ be
maximal ideals and $\chi_{\mfk m} : \kk/\mfk m \to \CC$ and
$\chi_{\mfk n} : \kk/{\mfk n} \to \CC$ be the canonical
isomorphisms. Then $({\mfk m}, {\mfk n}) \in \Spec(\kk) \times
\Spec(\kk)$ corresponds to the morphism $\chi_{\mfk m} \otimes
\chi_{\mfk n} \to \kk \otimes \kk \to \CC$. This identifies the
maximal ideal spectrum of $\kk \otimes \kk$ with $\Spec(\kk) \times
\Spec(\kk)$.

\begin{lemma}\label{lemma.Ag} 
The support of $A_g$ in $\Spec(\kk) \times \Spec(\kk)$ is contained in
the set \begin{equation*}\{ \, (g^{-1}(\mfk m), \ \mfk m), \, \mfk m
  \in \widehat \kk \, \} \,.\end{equation*}
\end{lemma}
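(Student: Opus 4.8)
The plan is to exhibit an explicit ideal of $\kk \otimes \kk$ that annihilates $A_g$ and then to compute its vanishing locus, using the identification of the maximal spectrum of $\kk \otimes \kk$ with $\Spec(\kk) \times \Spec(\kk)$ recalled just above the statement. First I would read off from the module structure of Equation \ref{eq.def.tms} the basic relation that, for every $z \in \kk$ and every $a \in A_g$, one has $(z \otimes 1) a = g(z) a$ and $(1 \otimes g(z)) a = g(z) a$, both sides being the standard $\kk$-action of the element $g(z) \in \kk$ on $a$. Hence $(z \otimes 1 - 1 \otimes g(z)) a = 0$ for all $z$ and all $a$, so $A_g$ is annihilated by the ideal $J \subset \kk \otimes \kk$ generated by the elements $z \otimes 1 - 1 \otimes g(z)$, $z \in \kk$.

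Since a module annihilated by an ideal $J$ has support contained in the set of maximal ideals containing $J$, it then suffices to determine which maximal ideals of $\kk \otimes \kk$ contain $J$. A maximal ideal corresponding to a pair $(\mfk m_1, \mfk m_2) \in \Spec(\kk) \times \Spec(\kk)$ is the kernel of the character $\chi_{\mfk m_1} \otimes \chi_{\mfk m_2}$, and it contains $J$ precisely when this character kills each generator, that is, when $\chi_{\mfk m_1}(z) = \chi_{\mfk m_2}(g(z))$ for all $z \in \kk$.

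The last step is to rewrite this condition geometrically. The two maps $z \mapsto \chi_{\mfk m_1}(z)$ and $z \mapsto \chi_{\mfk m_2}(g(z))$ are both $\CC$-algebra homomorphisms $\kk \to \CC$, the second because $g$ is a $\CC$-algebra endomorphism of $\kk$ and $\chi_{\mfk m_2}$ is a character; and two such characters coincide if and only if they have the same kernel, since each factors as the identity map through its residue field $\CC$. The kernel of the first is $\mfk m_1$ and that of the second is $g^{-1}(\mfk m_2)$, so the condition $\chi_{\mfk m_1} = \chi_{\mfk m_2} \circ g$ is equivalent to $\mfk m_1 = g^{-1}(\mfk m_2)$. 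This yields $\supp(A_g) \subset \{\, (g^{-1}(\mfk m), \mfk m) : \mfk m \in \widehat\kk \,\}$, as claimed.

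The entire argument is short, and the one point deserving care is the very first observation: one must use that $g$ is compatible with the $\kk$-structure, so that $g(z) \in \kk$ is meaningful and the identities $(z \otimes 1)a = g(z)a = (1 \otimes g(z))a$ genuinely hold. Once the annihilating ideal $J$ is identified, the remaining two steps are purely formal facts about supports of modules and about $\CC$-characters of $\kk$, so I do not expect any substantial obstacle there.
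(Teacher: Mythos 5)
Your proof is correct and is essentially the paper's own argument: both rest on the observation that the elements $z \otimes 1 - 1 \otimes g(z)$ annihilate $A_g$ (via the module structure of Equation \eqref{eq.def.tms}) and that $\chi_{\mfk m} = \chi_{\mfk n} \circ g$ precisely when $\mfk m = g^{-1}(\mfk n)$. The only difference is organizational: you compute the full annihilator ideal and its vanishing locus, whereas the paper argues contrapositively, exhibiting for each point $(\mfk m, \mfk n)$ off the graph a single element $w = z \otimes 1 - 1 \otimes g(z)$ lying outside the corresponding maximal ideal yet killing $A_g$, so that the localization there vanishes.
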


\begin{proof}
Assume that $({\mfk m}, {\mfk n}) \in \Spec(\kk) \times \Spec(\kk)$ is
such that $g^{-1}({\mfk n}) \neq \mfk m$.  Since $g\sp{-1}({\mfk n})
\neq \mfk m$, we have that $\chi_{\mfk m} \neq \chi_{\mfk n} \circ
g$. Then there exists $z \in \kk$ such that $\chi_{\mfk m}(z) \neq
\chi_{\mfk n}(g(z))$. Hence $w := z \otimes 1 - 1 \otimes g(z)$
satisfies $\chi_{\mfk m} \otimes \chi_{\mfk n}(w) = \chi_{\mfk m}(z) -
\chi_{\mfk n}(g(z)) \neq 0$ is not in $(\mfk m, {\mfk n})$.  However,
if $a \in A_g$, then $wa = g(z)a - g(z)a = 0$. By definition, this
means that the localization of $A_g$ at $({\mfk m}, {\mfk n})$ is
zero, and hence $({\mfk m}, {\mfk n})$ is not in the support of $A_g$.
\end{proof}

Recall \cite{BrylinskiLoc,KazhdanNistorSchneider} that if $S \subset
\kk \otimes \kk$ is a multiplicative subset and $M$ and $N$ are two
bimodules (or $A\sp{e}$ left-modules), then
\begin{equation}\label{eq.localization}
 S\sp{-1}\Tor_q\sp{A\sp{e}}(M, N) \, \simeq\, 
 \Tor_q\sp{A\sp{e}}(S\sp{-1}M, S\sp{-1}N) \, \simeq\, 
 \Tor_q\sp{S\sp{-1}A\sp{e}}(S\sp{-1}M, S\sp{-1}N) \,.
\end{equation}
This shows that the support of $\Tor_q\sp{A\sp{e}}(M, N)$ is contained
in the intersection of the supports of $M$ and $N$.

Let us assume for the rest of this section that the given endomorphism
$g$ of $A$ has a counterpart in an endomorphism of $\kk$--also denoted
$g$--such that $g(z a) = g(z) g(a)$, for all $z \in \kk$ and $a \in
A$.

\begin{corollary}\label{cor.support}
Let $S \subset \kk$ be a $g$-invariant multiplicative subset.  We have
\begin{equation*}
 \Hd_q(S\sp{-1}A, g) \simeq S\sp{-1}\Hd_q(A, g)\,.
\end{equation*}
Hence the support of $\Hd_*(A, g)$ is contained in the set
 \begin{equation*}\
   \{\, ({\mfk m}, {\mfk m}),\ {\mfk m} \in \Spec(\kk), \,
   g\sp{-1}({\mfk m}) = {\mfk m} \, \} \,.
 \end{equation*}
\end{corollary}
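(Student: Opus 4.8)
The plan is to reduce both assertions to the interpretation $\Hd_q(A,g)\simeq \Tor_q^{A^e}(A_g,A)$ of Proposition \ref{prop.isom.gt}, together with the localization isomorphism \eqref{eq.localization} and Lemma \ref{lemma.Ag}. For the first isomorphism I would begin by observing that, under Proposition \ref{prop.isom.gt}, the $\kk$-module structure on $\Hd_q(A,g)$ used in forming $S^{-1}\Hd_q(A,g)$ is induced from the $\kk\otimes\kk$-module structure on $\Tor_q^{A^e}(A_g,A)$. Because the actions induced by the two variables $A_g$ and $A$ coincide on the $\Tor$-group, the elements $s\otimes 1$, $1\otimes s$ and $s_1\otimes s_2$ all act by one and the same operator (the one attached to the product $s_1 s_2\in\kk$). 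Consequently, localizing the $\kk$-module $\Hd_q(A,g)$ at $S$ agrees with localizing $\Tor_q^{A^e}(A_g,A)$ at the multiplicative set $S\otimes S:=\{s_1\otimes s_2 : s_i\in S\}\subset\kk\otimes\kk$.

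Applying \eqref{eq.localization} with this multiplicative set then gives
\[
(S\otimes S)^{-1}\Tor_q^{A^e}(A_g,A)\ \simeq\ \Tor_q^{(S\otimes S)^{-1}A^e}\big((S\otimes S)^{-1}A_g,\,(S\otimes S)^{-1}A\big),
\]
and the remaining work is to identify the three localized objects on the right. Here $(S\otimes S)^{-1}A^e=(S^{-1}A)^e$ and $(S\otimes S)^{-1}A=S^{-1}A$ are routine. The one identification that genuinely uses the hypothesis is $(S\otimes S)^{-1}A_g=(S^{-1}A)_g$: since $s_1\otimes s_2$ acts on $A_g$ by $g(s_1)s_2$ and $S$ is $g$-invariant (so $g(S)\subseteq S$, and $1\in S$), the set $\{g(s_1)s_2\}$ inverts exactly $S$, carrying $S^{-1}A$ with its $g$-twisted $(S^{-1}A)^e$-structure; note that $g$ extends to $S^{-1}A$ precisely because $g(S)\subseteq S$. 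Feeding these back and applying Proposition \ref{prop.isom.gt} to $S^{-1}A$ yields $S^{-1}\Hd_q(A,g)\simeq \Hd_q(S^{-1}A,g)$.

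For the support statement I would argue directly on the $\Tor$-group rather than through the localization isomorphism, since $\kk\setminus\mfk m$ need not be $g$-invariant. By the remark following \eqref{eq.localization}, the support of $\Tor_q^{A^e}(A_g,A)$ in $\Spec(\kk)\times\Spec(\kk)$ lies in the intersection of the supports of $A_g$ and of $A$. Lemma \ref{lemma.Ag} bounds the support of $A_g$ by $\{(g^{-1}(\mfk m),\mfk m)\}$, while the same computation as in the proof of Lemma \ref{lemma.Ag}, carried out with $g$ the identity (using $w=z\otimes 1-1\otimes z$), shows that the ordinary bimodule $A$ is supported on the diagonal $\{(\mfk m,\mfk m)\}$. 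Intersecting the two loci forces $\mfk m=\mfk n$ together with $g^{-1}(\mfk m)=\mfk m$, which is exactly the asserted set.

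The main obstacle is bookkeeping rather than a deep difficulty: one must keep straight the three a priori different actions on $\Hd_q(A,g)\simeq\Tor_q^{A^e}(A_g,A)$ — the $\kk$-module structure appearing in $S^{-1}\Hd_q(A,g)$, and the two $\kk\otimes\kk$-structures induced from $A_g$ and from $A$ — and check that they are compatible enough that localizing at $S$ and at $S\otimes S$ coincide. The single place the hypothesis on $S$ is truly needed is the identification $(S\otimes S)^{-1}A_g=(S^{-1}A)_g$, where $g$-invariance of $S$ guarantees that inverting $g(s_1)s_2$ is the same as inverting $S$.
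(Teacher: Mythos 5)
Your proposal is correct and follows essentially the same route as the paper's own proof: the first isomorphism via Equation \eqref{eq.localization} applied to the multiplicative set $S\otimes S\subset \kk\otimes\kk$ (your identifications of $(S\otimes S)^{-1}A^e$, $(S\otimes S)^{-1}A$, and $(S\otimes S)^{-1}A_g$ just fill in details the paper leaves implicit), and the support statement by intersecting the support of $A$ (the diagonal) with that of $A_g$ (Lemma \ref{lemma.Ag}) inside $\Spec(\kk)\times\Spec(\kk)$, using that the support of a $\Tor$-group lies in the intersection of the supports of its arguments.
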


\begin{proof}
The first part of the proof follows from Equation
\eqref{eq.localization} for the multiplicative subset $S \otimes S
\subset \kk \otimes \kk$.  (A direct proof can be obtained as in
\cite{KazhdanNistorSchneider} for example).  The support of $A$ is
contained in the diagonal $\{({\mfk m}, {\mfk m}),\, {\mfk m} \in
\Spec(\kk)\}$. The support of $A_g$ is contained in the set
$\{(g^{-1}({\mfk m}), {\mfk m}),\, x \in \Spec(\kk)\}$. Since
$\Hd_q(A, g) \, \simeq \, \Tor_q^{A^e}(A,A_g)$ and
\begin{equation*}
   \{({\mfk m}, {\mfk m})\} \cap \{(g^{-1}({\mfk m}), {\mfk m})\} \, =
   \, \{({\mfk m}, {\mfk m}),\, g\sp{-1}({\mfk m}) = {\mfk m} \} \,
   \subset \, \Spec(\kk) \times \Spec(\kk)\,,
\end{equation*}
the result follows.
\end{proof}

Of the two $\kk$-module structures on $A$ and $A_g$, we shall now
retain just the one given by multiplication to the left, that is, the
usual $\kk$-module structure. We also have the following.

\begin{proposition}\label{prop.support}
The support of $\Hd_q(A, g)$ as a $\kk$-module is
\begin{equation*}
  \{ \, \mfk m \in \Spec(\kk), \ g\sp{-1}(\mfk m) = \mfk m \,
  \}.
\end{equation*}
\end{proposition}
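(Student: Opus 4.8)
The plan is to prove the two inclusions separately. The inclusion $\supp \Hd_q(A,g) \subseteq \{\mfk m : g^{-1}(\mfk m) = \mfk m\}$ is already essentially contained in Corollary \ref{cor.support}: retaining only the left $\kk$-module structure amounts to viewing $\Hd_q(A,g)$ through the map $\kk \to \kk \otimes \kk$, $z \mapsto z \otimes 1$, so that $\supp_\kk \Hd_q(A,g)$ is the image of $\supp_{\kk \otimes \kk} \Hd_q(A,g)$ under the first projection $\Spec(\kk \otimes \kk) \to \Spec(\kk)$. Corollary \ref{cor.support} places that support on the diagonal fixed-point set $\{(\mfk m, \mfk m) : g^{-1}(\mfk m) = \mfk m\}$, whose projection is exactly the asserted set. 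So the whole content of the proposition is the reverse inclusion: every $g$-fixed maximal ideal $\mfk m$ (in the support of $A$) actually lies in the support.

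For the reverse inclusion I would fix $\mfk m$ with $g^{-1}(\mfk m) = \mfk m$ and localize there. Because $g^{-1}(\mfk m) = \mfk m$, the multiplicative set $S = \kk \setminus \mfk m$ is $g$-invariant, so Corollary \ref{cor.support} applies and gives $\Hd_q(A,g)_{\mfk m} \simeq \Hd_q(A_{\mfk m}, g)$. Thus it suffices to show that $\Hd_q(A_{\mfk m}, g) \neq 0$ for some $q$, and the most economical target is $q = 0$. Reading off $b_g$ on $\calb_1 \to \calb_0$ gives $\Hd_0(A, g) = A / N_g$, where $N_g$ is the span of the elements $a_0 g(a_1) - a_1 a_0$.

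The next step is to reduce modulo $\mfk m$. The elements $a_0 g(a_1) - a_1 a_0$ map under $A \to A/\mfk m A$ to the corresponding elements for the induced endomorphism $\bar g$ of the fibre $A_0 := A/\mfk m A$, and a short check shows $\Hd_0(A_{\mfk m}, g)/\mfk m\,\Hd_0(A_{\mfk m}, g) \simeq \Hd_0(A_0, \bar g)$. Since $A$ is of finite type over $\kk$, the module $\Hd_0(A_{\mfk m}, g)$ is finitely generated over the local ring $\kk_{\mfk m}$, so by Nakayama it is nonzero as soon as $\Hd_0(A_0, \bar g) \neq 0$. It therefore suffices to produce a nonzero $\bar g$-twisted trace on the fibre, that is, a nonzero functional $\phi$ with $\phi(a \bar g(b)) = \phi(b a)$, which descends to a nonzero map $\Hd_0(A_0, \bar g) \to \CC$. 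In the commutative case of interest this is transparent: there $N_g$ is the ideal $I_g = (g(a) - a : a \in A)$, so $\Hd_0(A, g) = A/I_g$ is the coordinate ring of the fixed-point scheme, whose support is exactly the fixed locus, and at a point $\mfk m$ of that locus the fibre is $\CC$ with $\bar g = \mathrm{id}$, giving $\Hd_0(A_0, \bar g) = \CC \neq 0$.

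I expect the lower bound to be the only real obstacle: the upper bound is formal from Corollary \ref{cor.support}, whereas the reverse inclusion must genuinely use the hypothesis $g^{-1}(\mfk m) = \mfk m$. That hypothesis enters precisely in guaranteeing that the character of $\kk/\mfk m \simeq \CC$ is $g$-invariant, so that a twisted trace surviving to the fibre exists; the finite type assumption is what lets Nakayama propagate nonvanishing from the fibre back to $\Hd_0(A_{\mfk m}, g)$. The delicate point to watch is that a fixed ideal of $\kk$ does not by itself force $\bar g$ to act trivially on the fibre algebra, so one must keep enough of the $\kk$-structure (faithfulness of $\kk$ on $A$, or the commutativity of the main application where $\kk$ is the coordinate ring) to ensure the twisted trace is actually nonzero; without such control the reverse inclusion can fail for components of the fibre that $\bar g$ moves.
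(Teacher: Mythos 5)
Your first paragraph is, in substance, the paper's entire proof of this proposition: the paper combines Corollary \ref{cor.support} with exactly the projection fact you state (the support of a $\kk\otimes\kk$-module, viewed as a $\kk$-module through $z\mapsto z\otimes 1$, is the projection of its $\kk\otimes\kk$-support onto the first factor). Note, however, that since Corollary \ref{cor.support} is only a containment, this argument establishes only the inclusion $\supp_{\kk}\Hd_q(A,g)\subseteq\{\mfk m : g^{-1}(\mfk m)=\mfk m\}$, and that containment is all the paper ever proves or uses (Corollary \ref{cor.zero} and everything downstream need only the vanishing of $\Hd_q(A,g)_{\mfk n}$ at non-fixed $\mfk n$). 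So the ``is'' in the statement should be read as ``is contained in''; you are right to flag that an honest equality would require a reverse inclusion, but wrong to treat that reverse inclusion as the real content to be proved.

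The reverse inclusion, which occupies most of your proposal, is in fact false under the paper's standing hypotheses, so the caveats you list at the end are not repairable gaps but a fatal obstruction. Concretely, take $A=\calo[\CC^*]=\CC[x,x^{-1}]$ with the automorphism $g(x)=-x$, and $\kk=A^g=\CC[x^2,x^{-2}]$. Then $A$ is a finite type $\kk$-algebra, free of rank two over $\kk$ (in particular faithful, with $\supp(A)=\Spec(\kk)$), and $g$ restricts to the identity on $\kk$, so \emph{every} maximal ideal of $\kk$ satisfies $g^{-1}(\mfk m)=\mfk m$. Yet $g$ has no fixed point on $\Spec(A)=\CC^*$, so $\Hd_0(A,g)=A/\langle g(a)-a : a\in A\rangle = A/(x)=0$, and indeed $\Hd_q(A,g)=0$ for all $q$ (apply Corollary \ref{cor.zero} with $A$ as its own base ring, or Proposition \ref{prop.brylinski} with $X^g=\emptyset$): the support is empty while the fixed-ideal set is all of $\Spec(\kk)$. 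The precise point where your sketch breaks is the sentence identifying a $g$-fixed $\mfk m\in\Spec(\kk)$ with ``a point of the fixed locus'': the fixed-point scheme $\Spec(A/I_g)$ lives in $\Spec(A)$, and a $g$-fixed maximal ideal of $\kk$ need not have any $g$-fixed point of $\Spec(A)$ above it; $\bar g$ can permute the points of the fibre $A/\mfk m A$ freely, in which case $\Hd_0(A/\mfk m A,\bar g)=0$ and Nakayama yields nothing. (Your argument is correct in the special case $A=\kk$, where $g^{-1}(\mfk m)=\mfk m$ forces $\bar g=\mathrm{id}$ on $\kk/\mfk m$ and hence $I_g\subseteq\mfk m$; but that is not the generality of the proposition.)
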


\begin{proof}
Let $\kk_1$ and $\kk_2$ be finitely generated commutative complex
algebras with unit and let $M$ be a $\kk_1 \otimes \kk_2$-module with
support $K \subset \Spec(\kk_1 \otimes \kk_2) = \Spec(\kk_1) \times
\Spec(\kk_2)$.  Then the support of $M$ as a $\kk_1$ module is the
projection of $K$ onto the first component. Combining this fact with
Corollary \ref{cor.support} yields the desired result.
\end{proof}

This proposition then gives the following standard consequences

\begin{corollary}\label{cor.zero}
If $\mfk n \in \Spec(\kk)$ is such that $g\sp{-1}(\mfk n) \neq \mfk
n$, then $\Hd_q(A, g)_{\mfk n} = 0$.
In particular, if $g\sp{-1}(\mfk m) \neq \mfk m$ for all maximal
ideals $\mfk m$ of $\kk$, then $\Hd_q(A, g) = 0$.
\end{corollary}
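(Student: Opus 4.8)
The plan is to derive this Corollary as a direct consequence of Proposition \ref{prop.support}, which already identifies the support of $\Hd_q(A, g)$ as a $\kk$-module with the fixed-point set $\{\mfk m \in \Spec(\kk) : g^{-1}(\mfk m) = \mfk m\}$. Recall that the support of a module $M$ over $\kk$ is precisely the set of maximal ideals $\mfk m$ at which the localization $M_{\mfk m}$ is nonzero. Thus, by the very definition of support, a maximal ideal that does \emph{not} belong to the support has vanishing localization.

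First I would observe that if $\mfk n \in \Spec(\kk)$ satisfies $g^{-1}(\mfk n) \neq \mfk n$, then by Proposition \ref{prop.support} the ideal $\mfk n$ does not lie in the support of $\Hd_q(A, g)$. By the definition of support recalled just before Lemma \ref{lemma.Ag}, this means exactly that the localization $\Hd_q(A, g)_{\mfk n} = 0$, which is the first assertion.

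For the second assertion, I would note that if $g^{-1}(\mfk m) \neq \mfk m$ holds for \emph{every} maximal ideal $\mfk m$ of $\kk$, then the support of $\Hd_q(A, g)$ is empty. A finitely generated module over a Noetherian ring (and more generally any module) whose support is empty must itself be zero: if every localization at a maximal ideal vanishes, then the module vanishes, since a nonzero element $m$ would generate a nonzero cyclic submodule whose annihilator is a proper ideal contained in some maximal ideal $\mfk m$, forcing $m$ to survive in $\Hd_q(A,g)_{\mfk m}$, a contradiction. Hence $\Hd_q(A, g) = 0$.

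Since the heavy lifting is carried out in Proposition \ref{prop.support} (which in turn rests on Corollary \ref{cor.support} and the localization formula \eqref{eq.localization}), the proof of this Corollary is essentially formal. The only point requiring a word of care is the standard fact that a module with empty support is zero; this is where one implicitly uses that $A$ is finitely generated as a $\kk$-module so that $\Hd_q(A,g)$ is a reasonable (e.g.\ finitely generated) $\kk$-module, guaranteeing that emptiness of the support forces vanishing. I do not anticipate any genuine obstacle beyond invoking this standard commutative-algebra fact.
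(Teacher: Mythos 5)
Your proof is correct and takes essentially the same route as the paper: both parts are deduced from Proposition \ref{prop.support} together with the definition of support, with the second assertion following from the standard fact that a $\kk$-module with empty support is zero. One small remark: your closing hedge that finite generation of $\Hd_q(A,g)$ is implicitly needed is unnecessary (and slightly at odds with your own argument), since the Zorn's-lemma argument you give in the second paragraph shows that empty support forces vanishing for an \emph{arbitrary} module over a commutative ring with unit.
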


\begin{proof}
Since the support of $\Hd_q(A, g)$ is the set $\{\mfk m \in
\Spec(\kk), g\sp{-1}(\mfk m) = \mfk m\}$, we obtain that $\mfk n$ is
not in the support of $\Hd_q(A, g)$, that is, $\Hd_q(A, g)_{\mfk n} =
0$.  This proves the first part. The second part follows from the fact
that a $\kk$-module with empty support is zero.
\end{proof}

In order to introduce the closely related cyclic homology groups, let
us first recall the following standard operators acting on
$A\sp{\otimes n}$, $n \ge 0$, \cite{ConnesIHES}
\begin{equation}\label{equation.B}
\begin{split}
 s(\Tt) \, & := \, 1\otimes \Tt \, ,  \\
 t_g(\Tt) \, & := \, (-1)^n g(a_n) \otimes a_0\otimes\ldots\otimes
 a_{n-1} \, , \mbox{ and } \\
  B_g(\Tt) \, & := \ s \, \sum_{k=0}^{n} t_g^k(\Tt) \, .
\end{split}
\end{equation}

Let $\gamma$ act diagonally on $\calb_q(A)$:\ $g(\Tt) = g(a_0) \otimes
g(a_1) \otimes \ldots \otimes g(a_n)$.  Then $B_g$ descends to
differential
\begin{equation*}
 B_g : X_q(A) := \calb_q(A)/(g - 1) \calb_q(A) \to X_{q+1}(A)
\end{equation*}
satisfying $B_g\sp{2} = 0$ and $B_g b_g + b_g B_g = 0$.  Therefore $(
X_q(A) , b_g, B_g)$ is a mixed complex \cite{JonesKassel89}, where
$b_g$ is the $g$-twisted Hochschild homology boundary map defined in
Equation \eqref{eq.def.bg}.  Recall how its cyclic homology are
computed. Let
\begin{equation*}
  {\mathcal C}_n(A) \ := \ \bigoplus_{q\geq 0} \, X_{n-2q}(A) \,.
\end{equation*}
Then the {\em $g$-twisted cyclic homology} groups of the unital
algebra $A,$ denoted $\Hc_n(A,g),$ are the homology groups of the
cyclic complex $({\mathcal C}_*(A), b_g+B_g)$ \cite{ConnesIHES}. The
homology groups of the 2-periodic complex
\begin{equation*}
 b_g + B_g \, : \, \prod_{q \in \ZZ} X_{i
  +2q} (A)\to \prod_{q \in \ZZ} X_{i +1 +2q}(A)\,, \quad i \in \ZZ/2\ZZ\,,
\end{equation*}
are the {\em $g$-twisted periodic cyclic homology} groups of the
unital algebra $A$ and are denoted $\HP_n(A,g)$.

\subsection{Crossed-products}

Let in this subsection $A$ be an arbitrary complex algebra and
$\Gamma$ be a group acting on $A$ by automorphisms:\ $\alpha : \Gamma
\to \Aut(A)$. We do not assume any topology on $\Gamma$, that is,
$\Gamma$ is {\em discrete}. Then $A \rtimes \Gamma$ is generated by $a
\gamma$, $a \in A$, $\gamma \in \Gamma$, subject to the relation $a
\gamma b \gamma^\prime = a \alpha_{\gamma}(b) \gamma \gamma^\prime$.

We want to study the cyclic homology of $A \rtimes \Gamma$.  Both the
cyclic and Hochschild complexes of $A \rtimes \Gamma$ decompose in
direct sums of complexes indexed by the conjugacy classes
$\conj{\gamma}$ of $\Gamma$. Explicitly, to $\conj{\gamma} := \{ g
\gamma g\sp{-1}, g \in \Gamma \}$ there is associated the subcomplex
of the Hochschild complex $(\calb_q(A \rtimes \Gamma), b)$ generated
linearly by tensors of the form $a_0\gamma_0 \otimes a_1 \gamma_1
\otimes \ldots \otimes a_k \gamma_k$ with $\gamma_0 \gamma_1 \ldots
\gamma_k \in \conj{\gamma}$. The homology groups of this subcomplex of
the Hochschild complex $(\calb_q(A \rtimes \Gamma), b)$ associated to
$\conj{\gamma}$ will be denoted by $\Hd_*(A \rtimes
\Gamma)_{\gamma}$. We similarly define $\HC_*(A \rtimes \Gamma)_{\gamma}$ and
$\HP_*(A \rtimes \Gamma)_{\gamma}$, see \cite{FeiginTsygan,
  Karoubi, nistorInvent90} for instance. This yields the decomposition of Equation
\eqref{eq.decomp}. Similarly,
\begin{equation}
 \HC_q(A \rtimes \Gamma) \, \simeq \, \oplus_{\conj{\gamma} \in
   \conj{\Gamma}} \HC_q(A \rtimes \Gamma)_{\gamma}\,.
\end{equation}
If $\Gamma$ has finitely many conjugacy classes, a similar relation
holds also for periodic cyclic homology. Such decompositions hold 
more generally for group-graded algebras \cite{Lorenz}.

Let us assume now that $\Gamma$ is finite. Let, for any $g \in
\Gamma$, $C_g$ denote the centralizer of $g$ in $\Gamma$, that is,
$C_g = \{ \gamma \in \Gamma, g \gamma = \gamma g \}$. Then we have the
following result \cite{BlockGetzler, Brylinski, BrylinskiNistor, 
Farinati, FeiginTsygan, nistorInvent90, Lorenz, posthuma06, SW}.

\begin{proposition}\label{prop.decomp}
Let $\Gamma$ be a finite group acting on $A$ and $\gamma \in \Gamma$.
Then we have natural isomorphisms $ \Hd_q(A \rtimes \Gamma)_{\gamma}
\simeq \Hd_q(A, \gamma)^{C_\gamma}$, $\Hc_q(A \rtimes \Gamma)_{\gamma}
\simeq \Hc_q(A, \gamma)^{C_\gamma}$, and $\HP_q(A \rtimes
\Gamma)_{\gamma} \, \simeq \, \HP_q(A, \gamma)^{C_\gamma}$.
\end{proposition}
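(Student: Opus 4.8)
\section*{Proof proposal}

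The plan is to prove the Hochschild statement at the level of mixed complexes and then deduce the cyclic and periodic versions formally. First I would record that the conjugacy-class decomposition is a decomposition of \emph{mixed} complexes: the operator $B_g$ of Equation \eqref{equation.B} and the relations $b_g B_g + B_g b_g = 0$ respect the grading of $\calb_*(A \rtimes \Gamma)$ by the product $g_0 g_1 \cdots g_n$ taken modulo conjugacy, and they commute with the action of the centralizer. Consequently, any natural $C_\gamma$-equivariant quasi-isomorphism of mixed complexes realizing the Hochschild isomorphism will automatically induce the $\Hc$ and $\Hp$ isomorphisms; for $\Hp$ one uses in addition that a finite group has finitely many conjugacy classes, so that the product over $\ZZ$ in the definition of $\Hp$ splits as a finite direct sum over $\conj{\Gamma}$ of products. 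This reduces everything to comparing the twisted Hochschild mixed complex $(\calb_*(A), b_\gamma, B_\gamma)$ with the $\gamma$-component $(\calb_*(A \rtimes \Gamma)_\gamma, b, B)$.

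Next I would exhibit the comparison map. Define $\iota_\gamma \colon \calb_n(A) \to \calb_n(A \rtimes \Gamma)$ by $\iota_\gamma(a_0 \otimes a_1 \otimes \cdots \otimes a_n) = a_0 \gamma \otimes a_1 \otimes \cdots \otimes a_n$, placing the unit of $\Gamma$ in the slots $1, \ldots, n$. A direct computation of the Hochschild boundary of $A \rtimes \Gamma$ --- the first face produces $a_0 \alpha_\gamma(a_1)\gamma$, the inner faces multiply adjacent algebra elements, and the last face brings $a_n a_0 \gamma$ to the front with sign $(-1)^n$ --- matches exactly the three terms of $b_\gamma$ in Equation \eqref{eq.def.bg}, so $\iota_\gamma$ intertwines $b_\gamma$ with $b$ and lands in the $\gamma$-component. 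A parallel check shows it intertwines $B_\gamma$ with $B$, so $\iota_\gamma$ is a map of mixed complexes. Since every $c \in C_\gamma$ satisfies $c\gamma = \gamma c$, the automorphism $\alpha_c$ preserves the twisted differential and so defines the $C_\gamma$-action on $\Hd_*(A, \gamma)$, for which $\iota_\gamma$ is equivariant against the conjugation action of $C_\gamma$ on the target.

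It then remains to show $\iota_\gamma$ induces an isomorphism onto $\Hd_*(A, \gamma)^{C_\gamma}$. I would grade the $\gamma$-component by the exact product value $g = g_0 g_1 \cdots g_n \in \conj{\gamma}$: the inner faces preserve $g$ while the last face replaces $g$ by $g_n g g_n^{-1}$, so conjugation by $\Gamma$ identifies every graded piece with the product-exactly-$\gamma$ subcomplex and presents the $\gamma$-component as $\conj{\gamma} \cong \Gamma/C_\gamma$ copies glued along the $C_\gamma$-action. Averaging over the finite group $C_\gamma$, which is legitimate in characteristic $0$, identifies the homology of the $\gamma$-component with the $C_\gamma$-invariant part of the homology of the product-exactly-$\gamma$ subcomplex. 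The final ingredient is a retraction of that subcomplex onto the image of $\iota_\gamma$: the redundant group variables $g_1, \ldots, g_n$, constrained only by $g_0 \cdots g_n = \gamma$, are absorbed into the algebra entries and removed by an explicit simplicial homotopy, making $\iota_\gamma$ a quasi-isomorphism onto this piece. Combining these steps gives $\Hd_q(A \rtimes \Gamma)_\gamma \cong \Hd_q(A, \gamma)^{C_\gamma}$, and the reduction of the first paragraph upgrades this to the $\Hc$ and $\Hp$ statements. \textbf{The main obstacle} is precisely this last contraction: building the homotopy that collapses the group-element bookkeeping $C_\gamma$-equivariantly, and verifying that it is compatible with both $b$ and $B$ so that the argument survives passage to the cyclic and periodic theories.
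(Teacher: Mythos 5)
Your reduction of the cyclic and periodic statements to the Hochschild one rests on the claim that $\iota_\gamma$ is a map of mixed complexes, and that claim is false; this is a genuine gap, and it occurs one step \emph{before} the ``main obstacle'' you flag at the end. There are two distinct problems. First, $(\calb_*(A), b_\gamma, B_\gamma)$ is not a mixed complex: as the paper makes explicit right after Equation \eqref{equation.B}, the relations $B_\gamma^2 = 0$ and $b_\gamma B_\gamma + B_\gamma b_\gamma = 0$ hold only after passing to the coinvariants $X_*(A) := \calb_*(A)/(\gamma - 1)\calb_*(A)$ (on $\calb_*(A)$ itself the anticommutator equals $1$ minus the diagonal action of $\gamma$, the standard paracyclic defect), and the groups $\Hc_*(A,\gamma)$ and $\Hp_*(A,\gamma)$ are by definition computed from $(X_*(A), b_\gamma, B_\gamma)$; your argument never passes to $X_*(A)$. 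Second, even setting this aside, the asserted intertwining $B \circ \iota_\gamma = \iota_\gamma \circ B_\gamma$ fails at the chain level: already for $n = 0$ one has $B(\iota_\gamma(a_0)) = 1 \otimes a_0\gamma$ while $\iota_\gamma(B_\gamma(a_0)) = \gamma \otimes a_0$, and these are different elements of $\calb_1(A \rtimes \Gamma)$. The structural reason is that the image of $\iota_\gamma$, with the group element confined to slot $0$, is not stable under the cyclic rotation $t$ of $\calb_*(A \rtimes \Gamma)$, which moves $\gamma$ into slot $1$. So $\iota_\gamma$ does intertwine $b_\gamma$ with $b$ (that computation of yours is correct), but it is only a map of $b$-complexes, and the formal upgrade from $\Hd$ to $\Hc$ and $\Hp$ in your first paragraph collapses.

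The paper's own proof (given in Proposition \ref{prop.decomp2} in the filtered setting, and stated to apply verbatim here) goes in the opposite direction precisely so as to avoid this issue: it uses the $C_\gamma$-averaging chain map
\begin{equation*}
  a_0g_0 \otimes a_1g_1 \otimes \cdots \otimes a_ng_n \ \mapsto \ \frac{1}{|C_\gamma|}
  \sum_{h \in C_\gamma} \gamma h g_0^{-1}(a_0) \otimes h(a_1) \otimes \cdots \otimes
  h g_1 g_2 \cdots g_{n-1}(a_n) \,,
\end{equation*}
defined on the whole $\gamma$-component of the crossed-product complex and landing in the $C_\gamma$-\emph{invariant} subcomplex $\calb_*(A, b_\gamma)^{C_\gamma}$. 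Since $\gamma \in C_\gamma$, the paracyclic defect $1 - \gamma$ vanishes on that invariant subcomplex, so $B_\gamma$ is defined there and the mixed-complex relations hold; consequently this single map is compatible with the full cyclic structure and yields the $\Hd$, $\Hc$, and $\Hp$ isomorphisms simultaneously (taking $C_\gamma$-invariants commutes with homology because $C_\gamma$ is finite and we are in characteristic zero). To salvage your direction of comparison you would have to work with cyclic or paracyclic objects and the coinvariants $X_*(A)$, as in \cite{nistorInvent90}. Note also that even your Hochschild-only step needs care: the ``product exactly $\gamma$'' piece is not a subcomplex (as you yourself observe, the last face conjugates the product by $g_n$), so the claimed presentation of the $\gamma$-component as $\Gamma/C_\gamma$ copies glued along $C_\gamma$ must be formulated as an induced-module isomorphism rather than as a decomposition into subcomplexes.
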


An approach to this result (as well as to the spectral sequence for
the case when $\Gamma$ is not necessarily finite), using cyclic objects
and their generalizations, is contained in \cite{nistorInvent90}.
This proposition explains why we are interested in the twisted
Hochschild homology groups. The same proof will apply in the case of
algebras endowed with filtrations.  A proof of this proposition in the
more general case of filtered algebras is provided in Proposition
\ref{prop.decomp2}. Algebras with these properties are called {\em
  topological algebras} in \cite{Hubl1}, but this terminology
conflicts with the terminology used in other papers in the field.

\subsection{Finite type algebras and completions}
We continue to assume that $A$ is a $\kk$--algebra with unit, for some
commutative ring $\kk$, $\CC \subset \kk$.  We shall need the
following definition from \cite{KazhdanNistorSchneider}

\begin{definition}
A $\kk$-algebra $A$ is called a {\em finite type $\kk$-algebra} if
$\kk$ is a finitely generated ring and $A$ is a finitely generated
$\kk$-module.
\end{definition}

We shall need to consider completions of our algebras and complexes
with respect to the topology defined by the powers of an ideal. Let us
consider then a vector space $V$ endowed with a decreasing filtration
\begin{equation*}
  V \, = \, F_0V \supset F_1V\supset \ldots \supset F_nV \supset
  \ldots\; .
\end{equation*}
Then its {\em completion} is defined by
\begin{equation}
  \widehat{V} \, = \, \displaystyle{\lim_{\longleftarrow}}\, V/F_j V
  \;.
\end{equation}
If the natural map $V \to \widehat{V}$ is an isomorphism, we shall say
that $V$ is {\em complete}.  A linear map $\phi:V \to W$ between two
filtered vector spaces is called {\em continuous} if for every integer
$n \ge 0$ there is an integer $k \ge 0$ such that $\phi(F_kV) \subset
F_{n}W$. It is called {\em bounded} if there is an integer $k$ such
that $\phi(F_nV) \subset F_{n-k}W$ for all $n$. Clearly, every bounded
map is continuous.  A bounded map $\phi : V \to W$ of filtered vector
spaces extends to a linear map $\widehat \phi :\widehat{V} \to
\widehat{W}$ between their completions.  If $V$ and $W$ are filtered
vector spaces, the completed tensor product is defined as
\begin{equation}
 V \widehat \otimes W \, := \, \displaystyle{\lim_{\longleftarrow}}\;
 V/F_j V \otimes W/F_j W \;.
\end{equation}

We shall need the following standard lemma

\begin{lemma}\label{lemma.qi.complete}
 Let $f_* : (V_*, d) \to (W_*, d)$, $* \ge 0$, be a morphism of
 filtered complexes with $f_q(F_n V_q) \subset F_n W_q$ for all $n, q
 \ge 0$. We assume that all groups $V_q$ and $W_q$, $q \ge 0$, are
 complete and that $f_*$ induces isomorphisms $H_q(F_nV/F_{n+1}V) \to
 H_q(F_nW/F_{n+1}W)$ for all $n \ge 0$ and $q \ge 0$. Then $f$ is a
 quasi-isomrphism.
\end{lemma}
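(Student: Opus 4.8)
The plan is to reduce the statement to the acyclicity of a single complete filtered complex by passing to the mapping cone, and then to prove that acyclicity by a successive-approximation argument that uses completeness in an essential way.

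First I would form the mapping cone $C_* = \operatorname{cone}(f_*)$, so that $C_q = V_{q-1} \oplus W_q$ with the usual cone differential built from $d$ and $f$. Recall that $f_*$ is a quasi-isomorphism if and only if $C_*$ is acyclic, by the long exact homology sequence of the cone. I give $C_*$ the filtration $F_n C_q := F_n V_{q-1} \oplus F_n W_q$; since $f$ and $d$ preserve filtration degrees, this is a filtration of complexes. Two observations: (i) each $C_q$ is complete, because it is a finite direct sum of the complete spaces $V_{q-1}$ and $W_q$ and completion commutes with finite direct sums; and (ii) the associated graded complex in filtration degree $n$ is exactly $\operatorname{cone}(\operatorname{gr}_n f)$, where $\operatorname{gr}_n f \colon F_n V/F_{n+1}V \to F_n W/F_{n+1} W$ is the induced map. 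By hypothesis $\operatorname{gr}_n f$ induces isomorphisms on homology in every degree, hence is a quasi-isomorphism, and therefore its cone $\operatorname{gr}_n C$ is acyclic for every $n$.

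This reduces the lemma to the following assertion: a complete filtered complex $C_*$ (with $C = F_0 C \supset F_1 C \supset \cdots$) whose associated graded pieces $\operatorname{gr}_n C = F_n C/F_{n+1} C$ are all acyclic is itself acyclic. I would prove this directly. Fix a cycle $z \in C_q$, $dz = 0$. Starting from $z_0 := z \in F_0 C_q$, I construct inductively elements $w_n \in F_n C_{q+1}$ so that $z_{n+1} := z_n - d w_n$ lies in $F_{n+1} C_q$: each $z_n$ is again a cycle, its class in $\operatorname{gr}_n C_q$ is a cycle, hence a boundary by acyclicity of $\operatorname{gr}_n C$, and any lift $w_n \in F_n C_{q+1}$ of a bounding chain does the job. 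Because $w_n \in F_n C_{q+1}$, the partial sums of $\sum_{n\ge0} w_n$ are Cauchy, so by completeness of $C_{q+1}$ the series converges to some $w$. Since $d$ preserves the filtration it is continuous, so $dw = \sum_n d w_n$ and $z - dw = \lim_n z_{n+1}$. The residual term $z_{n+1}$ lies in $F_{n+1} C_q$, which tends to $0$ because completeness forces $\bigcap_n F_n C_q = 0$; hence $z = dw$ is a boundary. Applying this to $C = \operatorname{cone}(f)$ yields the result.

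The step I expect to require the most care is the convergence argument: I must check that $d$ commutes with the infinite sum (which follows from $d(F_n C) \subset F_n C$, giving continuity) and that the Hausdorff property $\bigcap_n F_n C_q = 0$, implicit in the hypothesis that $C_q$ is complete, is what lets me conclude $z - dw = 0$ rather than merely $z - dw \in \bigcap_n F_n C_q$. An alternative route is to run the spectral sequence of the filtered complex, where the hypothesis says $f$ induces an isomorphism on the $E^1$-page and hence on every later page; but then one must separately justify strong convergence of both spectral sequences, and for this the completeness and Hausdorffness bookkeeping is exactly the same, so the direct cone argument seems cleaner.
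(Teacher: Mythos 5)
Your proof is correct, and it takes a genuinely different route from the paper's. The paper works with the quotient complexes rather than with a cone on the associated graded: first, an induction on $n$ via the five lemma (the ``spectral sequence argument,'' applied to $0 \to F_nV/F_{n+1}V \to V/F_{n+1}V \to V/F_nV \to 0$ and its analogue for $W$) shows that $f$ induces quasi-isomorphisms $V/F_nV \to W/F_nW$ for all $n$; then, since completeness means $V \cong \varprojlim V/F_nV$ and the towers of quotient complexes have surjective structure maps, the Milnor $\varprojlim\sp{1}$-exact sequence for the homology of an inverse limit of complexes transfers these isomorphisms to $H_q(V) \to H_q(W)$. Your mapping-cone reduction plus successive approximation replaces precisely the $\varprojlim\sp{1}$ step by a hands-on convergence argument: your version is more self-contained (no $\varprojlim\sp{1}$ machinery, and the exact role of completeness --- convergence of $\sum_n w_n$ and Hausdorffness $\bigcap_n F_nC_q = 0$ --- is explicit), while the paper's is shorter granted the standard machinery. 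One step you should spell out in your write-up, since it is the point of care you yourself flagged: to conclude $z - dw \in F_{N+1}C_q$ for every $N$, write $z - dw = z_{N+1} - d(w - s_N)$ with $s_N = \sum_{n \le N} w_n$, and note that the tail $w - s_N$ lies in $F_{N+1}C_{q+1}$ because each $F_{N+1}C_{q+1}$ is closed in the filtration topology (it is a subgroup containing the basic neighborhoods $F_mC_{q+1}$ for $m \ge N+1$); then both terms lie in $F_{N+1}C_q$, and Hausdorffness finishes the argument. With that sentence added, the proof is complete.
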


\begin{proof}
 A spectral sequence argument (really, just the \dlp Five Lemma\drp)
 shows that $f_*$ defines a quasi-isomorphism $(V_*/F_nV_*, d) \to
 (W_*/F_n W_*, d)$, for all $n \ge 0$.  The assumption that the
 modules $V_q$ and $W_q$ are complete and an application of the
 $\displaystyle{\lim_{\leftarrow}}\sp{1}$-exact sequence for the
 homology of a projective limit of complexes then give the result.
\end{proof}

Typically, the filtered vector spaces $V$ that we will consider will
come from $A$-modules endowed with the $I$-adic filtration
corresponding to a two-sided ideal $I$ of $A$. More precisely, $F_n V
:= I ^nV$, for some fixed two-sided ideal $I$ of $A$.  Most of the
time, the ideal $I$ will be of the form $I = I_0 A$, where $I_0
\subset \kk$ is an ideal. Consequently, if $M$ and $N$ are a right,
respectively a left, $A$--module endowed with the $I$--adic
filtration, then we define
\begin{equation}
  M \widehat \otimes_A N \, := \, \displaystyle{\lim_{\longleftarrow}}\; 
  M/I^nM \otimes_A N/I^nN \;.
\end{equation}

Basic results of homological algebra extend to $A$--modules endowed
with filtrations if one is careful to use only {\em admissible}
resolutions, that is resolutions that have {\em bounded} $\CC$--linear
contractions, in the spirit of relative homological algebra. The
completion of every admissible resolution by finitely generated free
modules is admissible, and this is essential for our argument.

Let $A$ be a filtered algebra by $F_q A \subset A$, where $F_q A F_p A
\subset F_{p + q} A$. We then define $A^{\widehat\otimes q} :=
\displaystyle{\lim_{\longleftarrow}}\, (A/ F_nA)^{\otimes q}$ be the
topological tensor product in the category of complete modules.  In
the case of the $I$-adic filtration, we have $F_p A := I\sp{p}A$ and
$A^{\widehat\otimes q} := \displaystyle{\lim_{\longleftarrow}}\, (A/
I^nA)^{\otimes q}$.  Let $g$ be an endomorphism $g : A \to A$ as
before, and assume also that $g$ preserves the filtration and that it
induces an endomorphism of $\kk$ as well. (So $g$ is an endomorphism
of $A$ as a $\CC$-algebra, not as a $\kk$-algebra.)  Then the {\em
  topological, twisted Hochschild complex} \cite{Seibt1} of $A$ is
denoted $(\widehat{\calb}_*(\widehat A), b_g)$ and is the completion
of the usual Hochschild complex $(\calb_*(A), b_g)$ with repect to the
filtration topology topology. See also \cite{Hubl1}. Explicitly,
\begin{equation}
  \widehat A \stackrel{b_g}{\longleftarrow} A\widehat \otimes A
  \stackrel{b_g}{\longleftarrow} A^{\widehat\otimes 3}
  \stackrel{b_g}{\longleftarrow} \ldots \stackrel{b_g}{\longleftarrow}
  A^{\widehat\otimes n+1} \stackrel{b_g}{\longleftarrow} \ldots
\end{equation}
We shall denote by $\tHd_*(\widehat A, g)$ the homology of the
topological, twisted Hochschild complex. Note that $\Hd_q(\widehat
A,g)$ is naturally a $\kk$-module since $b_g$ is $\kk$-linear. When
$I=0$, we recover, of course, the usual Hochschild homology groups
$\Hd_*(A,g)$ and we have natural, $\kk$-linear maps $\Hd_q(A, g) \to
\tHd_*(\widehat A, g)$.  The topological, $b'$--complex
$(\widehat{\calb}_*(\widehat A), b')$ is defined analogously. We then
see that $s$ is a bounded contraction for $(\widehat{\calb}_*(\widehat
A), b')$, where, we recall $s(\Tt)=1\otimes \Tt$. In particular, the
complex so $(\widehat{\calb}_*(\widehat A), b')$ is still acyclic.

From now on, we shall fix an ideal $I$ of $A$ and consider only the
$I$-adic filtration given by $F_p A = I\sp{p} A$.  We will see than
that both the Hochschild homology and the periodic cyclic homology
groups behave very well under completions for the $I$-adic topologies.
We need however to introduce some notation and auxiliary material for
the following theorem.

We shall need the following notation.  Let $I = I_0 A$, where $I_0$ is
an ideal of $\kk$. We denote by $\widehat{\kk}$ the completion of $\kk$
with respect to the topology defined by the powers of $I_0$, that is,
$\widehat{\kk} := \displaystyle{\lim_{\longleftarrow}} \, \kk/I_0^n$.
Then $\widehat A \simeq A \otimes_{\kk} \widehat{\kk}$, by standard
homological algebra \cite{AtiyahMacDonald, BourbakiAlgComm}.
Similarly, we shall denote by $\widehat A^{e} := \widehat A \widehat
\otimes \widehat A^{\opp}$, the completion of $A^{e}$ with respect to
the topology defined by the ideal $I_2 := I_0 \otimes \kk + \kk
\otimes I_0$. Let $g$ be a $\kk$-algebra endomorphism of $A$ and let
$A_g$ be the $A\sp{e}$ right-module with the action $a (a'\otimes a'')
= a''a g(a')$.

The following result was proved for $g = 1$ (identity automorphism)
in \cite{KazhdanNistorSchneider}.

\begin{theorem}\label{theorem.completion} \
Let $A$ be a finite-type $\kk$-algebra.  Using the notation just
introduced, we have that $\Hd_q(A, g)$ is a finitely generated
$\kk$-module for each $q$. If we endow $A$ with the filtration defined
by the powers of $I$, then the natural map $\Hd_*(A, g) \to
\tHd_*(\widehat A, g)$ and the $\kk$--module structure on $\Hd_*(A)$
define a $\kk$-module isomorphism
\begin{equation*} 
\Hd_*(A, g) \otimes_\kk \widehat{\kk} \simeq \tHd_*(\widehat A, g).
\end{equation*} 
\end{theorem}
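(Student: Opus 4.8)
The plan is to reduce everything to the description $\Hd_q(A,g)\simeq \Tor_q^{A^e}(A_g,A)$ of Proposition \ref{prop.isom.gt} and to exploit the finiteness coming from the finite type hypothesis. First I would record that $\kk$ is Noetherian, being a finitely generated commutative $\CC$-algebra, and that the central subalgebra $\kk\otimes\kk\subset A^e$ is therefore Noetherian as well. Since $A$ is a finitely generated $\kk$-module, $A^e=A\otimes A^{\opp}$ is a finitely generated $\kk\otimes\kk$-module, and one checks that $\kk\otimes\kk$ is central in $A^e$; hence $A^e$ is a left and right Noetherian ring, being module-finite over a central Noetherian subring. As $A$ is cyclic as an $A^e$-module (generated by $1$), it admits a resolution $P_\bullet\to A$ by finitely generated free $A^e$-modules. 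Computing $\Tor_q^{A^e}(A_g,A)=H_q(A_g\otimes_{A^e}P_\bullet)$ with this resolution, each term $A_g\otimes_{A^e}P_i$ is a finitely generated $\kk$-module (a finite direct sum of copies of $A_g$), so $\Hd_q(A,g)$ is a subquotient of a finitely generated module over the Noetherian ring $\kk$, which proves the first assertion.

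For the completion statement I would use that the $I_0$-adic completion $\widehat\kk$ is flat over the Noetherian ring $\kk$, and that for a finitely generated $\kk$-module $M$ one has $\widehat M\cong M\otimes_\kk\widehat\kk$. The key is to compute $\tHd_*(\widehat A,g)$ using the completed resolution $\widehat P_\bullet$ rather than the completed bar complex. Since $P_\bullet\to A$ is an admissible resolution by finitely generated free $A^e$-modules, its completion $\widehat P_\bullet\to\widehat A$ is again admissible — exactly the fact highlighted before the theorem — and hence computes the topological twisted Hochschild homology:
\[
\tHd_*(\widehat A,g)\;\cong\;H_*\!\big(\widehat{A_g}\,\widehat\otimes_{\widehat{A^e}}\widehat P_\bullet\big).
\]
Because each $P_i$ is finitely generated and free over $A^e$, the completed tensor product identifies termwise as $\widehat{A_g}\,\widehat\otimes_{\widehat{A^e}}\widehat P_i\cong (A_g\otimes_{A^e}P_i)\otimes_\kk\widehat\kk$, so the whole complex is $(A_g\otimes_{A^e}P_\bullet)\otimes_\kk\widehat\kk$. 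Flatness of $\widehat\kk$ then lets homology pass through the base change, yielding $H_*(A_g\otimes_{A^e}P_\bullet)\otimes_\kk\widehat\kk=\Hd_*(A,g)\otimes_\kk\widehat\kk$, and one checks this isomorphism is the one induced by the natural map $\Hd_*(A,g)\to\tHd_*(\widehat A,g)$ together with the $\kk$-module structure.

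The main obstacle is the comparison between the completed bar complex, which by definition computes $\tHd_*(\widehat A,g)$, and the completed finite free resolution $\widehat P_\bullet$. This rests on the relative-homological-algebra framework the paper sets up: both $\widehat{\calb}'_*(\widehat A)$ (acyclic, since $s$ is a bounded contraction) and $\widehat P_\bullet$ are admissible resolutions of $\widehat A$, so the comparison theorem produces a canonical isomorphism of the resulting topological $\Tor$ groups; Lemma \ref{lemma.qi.complete} is the technical device that controls the passage to the limit and rules out $\lim^1$-obstructions, using that the terms involved are complete and built from finitely generated modules. A secondary bookkeeping point, since $g$ need not be $\kk$-linear but only semilinear over the induced endomorphism of $\kk$, is to keep track of which of the two $\kk$-actions on $A_g$ is being completed; retaining the left action fixed earlier makes $b_g$ genuinely $\kk$-linear and renders all the identifications above $\kk$-linear, so the final isomorphism is indeed one of $\kk$-modules.
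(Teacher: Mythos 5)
Your proposal is correct and follows essentially the same route as the paper's own proof: a resolution of $A$ by finitely generated free $A^e$-modules (using that $A^e$ is Noetherian) gives finite generation of $\Hd_q(A,g)\simeq\Tor_q^{A^e}(A_g,A)$, its $I$-adic completion is again admissible and identifies termwise with $(A_g\otimes_{A^e}P_\bullet)\otimes_\kk\widehat\kk$, and the comparison theorem for admissible resolutions (against the completed bar complex, contracted by $s$) yields $\tHd_*(\widehat A,g)\simeq\Hd_*(A,g)\otimes_\kk\widehat\kk$. Your phrasing via flatness of $\widehat\kk$ is equivalent to the paper's appeal to exactness of completion on finitely generated modules over a Noetherian ring, and your extra remarks (Noetherianity of $A^e$, semilinearity bookkeeping) only supply details the paper leaves implicit.
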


\begin{proof} 
We consider a resolution of $A$ by finitely-generated, free, left
$A^e$ modules say $(A^e\otimes V_i,d_i)$, which always exist since
$A^e$ is Noetherian. By tensoring this resolution with $A_g$ over
$A\sp{e}$, we obtain that the homology groups $\Hd_q(A, g) \simeq
\Tor_q^{A^e}(A_g, A)$ are all finitely generated $\kk$-modules. Since
completion over Noetherian rings is exact in the category of finitely
generated $\kk$-modules, the above complex completed by $I_2 := I_0
\otimes \kk + \kk \otimes I_0$ provides an admissible resolution of
$\widehat{A}$ namely $(\widehat{A}^e\otimes
V_i,\widehat{d}_i)$. (Recall that by the admissibility of a complex we
mean the existence of a bounded contraction, this property in this
case is provided by \cite{KazhdanNistorSchneider}.)  Now $
\mathrm{Tor}^{\widehat{A}^e}_*(\widehat{A}_g,\widehat{A})$ is the
homology of the complex,
\begin{eqnarray*}
  \widehat{A}_g \otimes _{\widehat{A}^e} (\widehat{A}^e\otimes V_i,\widehat{d}_i)
  =(\widehat{A}_g\otimes V_i, 1\otimes \widehat{d}_i).
\end{eqnarray*}
The right-hand side is the completion  with respect to the  ideal $I$
of the complex $(A_g\otimes V_i,1\otimes d_i)$, a complex   
whose homology is $\Hd_*(A,g)$. Hence 
\begin{equation*}
  \Tor^{\widehat{A}^e}_*(\widehat{A}_g,\widehat{A}) \, = \,
  \Hd_*(A,g)\otimes_{\kk} \widehat{\kk} \,,
\end{equation*} 
Since $\Hd_*(A,g)$ is a finitely generated $\kk$-module.  On the
hand, since the $I$-adic bar-complex
  $(\widehat{\calb}_*(\widehat A), b')$ has an obvious contraction
  that makes it admissible, there exist morphisms
\begin{equation*}(\widehat{A^e}\otimes V_i,\widehat{d}_i) \overset{\phi}{\rightarrow} 
(\widehat{\calb}_*(\widehat A), b')
\overset{\psi}{\rightarrow}(\widehat{A^e}\otimes V_i,\widehat{d}_i),\end{equation*}
with $\phi\circ \psi$, respectively $\psi\circ \phi$, homotopic to
identity on $(\widehat{\calb}_*(\widehat A), b')$, respectively
$(\widehat{A^e}\otimes V_i,\widehat{d}_i)$, thus after tensoring with
$\widehat{A}_g$ over $\widehat{A}^e$, we obtain that
$\Tor^{\widehat{A}^e}_*(\widehat{A}_g,\widehat{A}) =
\Hd_*^{top}(\widehat{A},g)$.
\end{proof}

Using the same method, one can prove also that 
\begin{equation}\label{eq.Hubl}
  \tHd_*(\widehat A, g) \simeq \Tor_q^{\widehat A^e}(\widehat A_g,
  \widehat A) \,,
\end{equation}
where the right hand side denotes the derived functors of the tensor
product of $\widehat A^e$ filtered modules. This result that was
proved by H\umlaut{u}bl in the commutative case, \cite{Hubl1}.  We now
recall some properties of periodic cyclic homology with repect to
completions. We have the following result of Goodwille
\cite{Goodwillie}.

\begin{theorem}[Goodwillie] \label{Goodwillie} \
If $J \subset B$ is a nilpotent ideal of an algebra $B$, then the
quotient morphism $B \to B/J$ induces an isomorphism $\Hp_*(B) \to
\Hp_*(B/J)$.
\end{theorem}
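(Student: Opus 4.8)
The plan is to deduce the isomorphism from the vanishing of the \emph{relative} periodic cyclic homology. Writing $\bar B := B/J$, the quotient map induces a surjection of mixed complexes $(\calb_*(B), b, B) \to (\calb_*(\bar B), b, B)$ which is split in each degree, so its kernel $K_*$ is again a mixed complex and assembles into a $\ZZ/2$-graded periodic complex fitting into a short exact sequence $0 \to K \to C^{\per}_*(B) \to C^{\per}_*(\bar B) \to 0$. This produces a six-term exact sequence in periodic cyclic homology, so it suffices to prove that the homology of the relative periodic complex $K$ vanishes.

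First I would reduce to a square-zero ideal. If $J^{N+1}=0$, then along the tower of quotients
\[
  B \twoheadrightarrow B/J^N \twoheadrightarrow \cdots \twoheadrightarrow B/J^2 \twoheadrightarrow B/J = \bar B
\]
each successive surjection $B/J^{k+1}\to B/J^{k}$ has kernel $J^{k}/J^{k+1}$ with $(J^{k}/J^{k+1})^2 \subseteq J^{2k}/J^{k+1}=0$ for $k\ge 1$. By the six-term sequence and induction it is therefore enough to treat an extension $0\to J\to B\to \bar B\to 0$ with $J^2=0$, so that $J$ is a $\bar B$-bimodule.

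The core of the argument is the acyclicity of the relative periodic complex of a square-zero extension. Choosing a vector-space complement $B = J\oplus W$ (with $1\in W$, legitimate since $J$ is nilpotent, hence proper), I would filter the cyclic bicomplex by the number of tensor factors lying in $J$. Because $J$ is an ideal with $J^2=0$, the Hochschild differential $b$ can never merge two $J$-factors — such a term lands in $J^2=0$ — and the insertion of $1\in W$ by the Connes operator $B$ does not lower the $J$-count, so both $b$ and $B$ preserve the filtration; the associated graded is the cyclic complex of $\bar B$ with coefficients in tensor powers of the conormal bimodule $J$. On the associated graded the essential point is that in the $2$-periodic direction the operator $B$ furnishes a contracting homotopy for the relative part, equivalently that the periodicity operator $S$ acts \emph{nilpotently} on the relative cyclic homology. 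This is precisely where characteristic zero enters, the homotopy being built by inverting the integer weights attached to the grading; it forces both $\varprojlim_S$ and $\varprojlim^1_S$ of the relative cyclic groups to vanish.

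I expect the main obstacle to be convergence, since $\Hp_*$ is computed from the \emph{product} total complex $\prod_q X_{i+2q}$ rather than a direct sum, so a $\varprojlim^1$ term must be controlled before termwise acyclicity can be promoted to acyclicity of the full relative periodic complex. The saving feature is that, after the reduction to a square-zero ideal and for tensors of fixed length, the $J$-adic filtration is finite, so the spectral sequence is bounded in each total degree; the completeness hypotheses of Lemma \ref{lemma.qi.complete} then apply and yield the vanishing of $\Hp_*(K)$, completing the proof.
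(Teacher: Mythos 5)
First, note that the paper does not prove this statement at all: Theorem \ref{Goodwillie} is quoted from Goodwillie's paper \cite{Goodwillie} and is used as a black box (e.g.\ in the proofs of Theorem \ref{theorem.cross2} and Theorem \ref{cor.cross2}). So your proposal cannot be compared with an internal argument; it must be judged as a proof of Goodwillie's theorem itself. Your outline is in fact the standard one (essentially Goodwillie's own, reproduced in \cite{LodayBook}): pass to the relative periodic complex, reduce along the tower $B/J^k$ to a square-zero extension, filter by the number of tensor factors lying in $J$, identify the associated graded with the weight-graded periodic complex of the trivial extension $\bar B \ltimes J$, and conclude on each positive-weight piece using characteristic zero, with completeness of the product total complex (your appeal to Lemma \ref{lemma.qi.complete}) converting graded acyclicity into acyclicity. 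The reduction to $J^2=0$, the verification that $b$ and $B$ preserve the filtration, the identification of the associated graded, and the convergence discussion are all correct.

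The one genuine gap sits at the heart of the matter: the claim that each graded piece of positive weight is acyclic. As literally stated --- ``in the $2$-periodic direction the operator $B$ furnishes a contracting homotopy'' --- this is not right: $B$ is part of the differential $b+B$, not a homotopy, and no rescaling of $B$ contracts the complex. What the argument actually needs is the Cartan homotopy formula (Rinehart, Goodwillie): for a derivation $D$ of an algebra $R$ there is an operator $h_D$ with bounded degree shift on the periodic complex such that $L_D = [b+B,\, h_D]$; applying this to the Euler derivation of the graded algebra $\bar B \ltimes J$, which acts as multiplication by $p$ on the weight-$p$ part, the operator $h_D/p$ is a contracting homotopy for every $p \geq 1$ --- this is where the integer weights are inverted, and it is exactly the assertion that derivations act trivially on $\Hp$. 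You correctly name the mechanism, but this lemma, its explicit homotopy, and its compatibility with the product total complex constitute the entire substance of Goodwillie's theorem; asserting it amounts to reducing the theorem to itself. Likewise, your alternative formulation --- that $S$ acts nilpotently on the relative $\Hc$ groups in each degree --- is precisely what Goodwillie proves, not something that follows from the steps you wrote. So the proposal is a correct and well-organized reduction of the theorem to its essential core, but the core itself is left unproved.
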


This gives the following result of \cite{Seibt1} (Application 1.15(2)).

\begin{theorem}[Seibt] 
\label{Seibt} Let $J$ be a two--sided ideal of an algebra $B$. 
Then the quotient morphism $B \to B/J$ induces an isomorphism
$\tHp_*(\widehat B) \to \Hp_*(B/J)$, where the completion is with
respect to the powers of $J$.
\end{theorem}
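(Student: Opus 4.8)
The plan is to compute $\tHp_*(\widehat{B})$ as the homology of the inverse limit of the ordinary periodic cyclic complexes of the nilpotent quotients $B/J^n$, and then to invoke Goodwillie's theorem (Theorem \ref{Goodwillie}) to collapse this inverse system onto $\Hp_*(B/J)$. First I would observe that, for the $J$-adic filtration, the completed periodic cyclic complex of $\widehat{B}$ is by construction the inverse limit $\lim_{\longleftarrow} C^\per_*(B/J^n)$ of the periodic cyclic complexes of the finite quotients, since $\widehat{B}^{\widehat\otimes q} = \lim_{\longleftarrow} (B/J^n)^{\otimes q}$ and the operators $b$ and $B$ are bounded and hence pass to the limit. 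The projection $\widehat{B} \to B/J$ (the image of $B \to B/J$ after completion) induces the map in the statement, realized as the composite $\tHp_*(\widehat{B}) \to \lim_{\longleftarrow} \Hp_*(B/J^n) \to \Hp_*(B/J)$.

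Next I would analyze the inverse system $\{\Hp_*(B/J^n)\}_n$. For each $n$ the ideal $J/J^n \subset B/J^n$ satisfies $(J/J^n)^n = 0$, so it is nilpotent; Goodwillie's theorem therefore gives a canonical isomorphism $g_n : \Hp_*(B/J^n) \xrightarrow{\ \sim\ } \Hp_*(B/J)$ induced by the quotient $B/J^n \to B/J$. Because the transition map $B/J^{n+1} \to B/J^n$ fits into the commuting triangle with the two projections onto $B/J$, the maps $g_n$ are compatible with the transition maps, that is, $g_n$ composed with the transition map equals $g_{n+1}$. Consequently each transition map $\Hp_*(B/J^{n+1}) \to \Hp_*(B/J^n)$ equals $g_n^{-1} \circ g_{n+1}$ and is an isomorphism, so the inverse system is essentially constant.

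Finally I would feed this into the Milnor $\lim^1$ exact sequence for the homology of a projective limit of complexes (the same tool used in the proof of Lemma \ref{lemma.qi.complete}):
\begin{equation*}
0 \to \lim_{\longleftarrow}{}^1\, \Hp_{*+1}(B/J^n) \to \tHp_*(\widehat{B}) \to \lim_{\longleftarrow} \Hp_*(B/J^n) \to 0 \,.
\end{equation*}
Since every transition map is an isomorphism, the system satisfies the Mittag--Leffler condition, the $\lim^1$ term vanishes, and $\lim_{\longleftarrow} \Hp_*(B/J^n) \cong \Hp_*(B/J)$ via the compatible isomorphisms $g_n$. This exhibits the map $\tHp_*(\widehat{B}) \to \Hp_*(B/J)$ as an isomorphism and completes the argument.

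I expect the main obstacle to be the first step rather than the last: one must justify carefully that the topological periodic cyclic complex of $\widehat{B}$ is genuinely the termwise inverse limit of the complexes $C^\per_*(B/J^n)$, compatibly with the $b + B$ differential and with the infinite-product structure defining the periodic (as opposed to the finite cyclic) complex. Once the completed complex is identified with this inverse limit, the Milnor sequence applies verbatim and the Goodwillie input makes the inverse system collapse; the remaining verifications---boundedness of $b$ and $B$ for the $J$-adic filtration and the commutativity of the relevant triangles of quotient maps---are routine.
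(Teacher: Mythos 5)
Your proof is correct, and it follows exactly the route the paper intends: the paper gives no proof of its own (it cites Seibt's Application 1.15(2)) but explicitly presents the theorem as a consequence of Goodwillie's theorem, which is precisely your derivation. Your elaboration---identifying the completed periodic complex with the termwise inverse limit $\lim_{\longleftarrow} C^{\per}_*(B/J^n)$, noting the transition maps are surjective so the Milnor $\lim^1$ sequence applies, and using Goodwillie to make the tower of homology groups essentially constant---is the standard way to make that deduction precise, and all the steps (nilpotence of $J/J^n$, compatibility of the Goodwillie isomorphisms with the transition maps, commutation of products with inverse limits) check out.
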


Let us assume that $I \subset A$ is a two-sided ideal invariant for
the action of the finite group $\Gamma$, so that $\widehat A \rtimes
\Gamma$ is defined. Then the Hochschild, cyclic, and periodic cyclic
complexes of the completion of $\widehat A \rtimes \Gamma$ still
decompose according to the conjugacy classes of $\Gamma$.  Recall that
$C_\gamma$ denotes the centralizer of $\gamma \in \Gamma$ in $\Gamma$
(that is, the set of elements in $\Gamma$ commuting with $\gamma$) and
denote $J := I\rtimes \Gamma = I\otimes \CC[\Gamma]$.

\begin{proposition}\label{prop.decomp2}  
Let $\Gamma$ be a finite group, then $\widehat{A}\rtimes \Gamma$ is
naturally isomorphic to the $J := I\rtimes \Gamma$-adic completion of
$A\rtimes \Gamma$ and we have natural isomorphisms $\tHd_q(\widehat A
\rtimes \Gamma)_{\gamma} \simeq \tHd_q(\widehat A,
\gamma)^{C_\gamma}$, $\tHc_q(\widehat A \rtimes \Gamma)_{\gamma}
\simeq \tHc_q(\widehat A, \gamma)^{C_\gamma}$, and, similarly,
$\tHp_q(\widehat A \rtimes \Gamma)_{\gamma} \simeq \tHp_q(\widehat A,
\gamma)^{C_\gamma}$.
\end{proposition}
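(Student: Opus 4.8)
The plan is to prove Proposition \ref{prop.decomp2} as a completed, filtered analogue of Proposition \ref{prop.decomp}, using the machinery assembled in the preceding subsection. The first task is to justify the claim that $\widehat{A} \rtimes \Gamma$ is the $J$-adic completion of $A \rtimes \Gamma$, where $J = I \rtimes \Gamma = I \otimes \CC[\Gamma]$. Since $\Gamma$ is finite, $A \rtimes \Gamma \cong A \otimes \CC[\Gamma]$ as a $\CC$-vector space, and one computes directly that $J^n = I^n \otimes \CC[\Gamma]$ because $\CC[\Gamma]$ is spanned by finitely many group elements and $I$ is $\Gamma$-invariant. Hence $(A \rtimes \Gamma)/J^n \cong (A/I^n) \otimes \CC[\Gamma]$, and passing to the inverse limit gives $\widehat{A \rtimes \Gamma} \cong \widehat{A} \otimes \CC[\Gamma] \cong \widehat{A} \rtimes \Gamma$, the last isomorphism being an algebra isomorphism because the $\Gamma$-action extends continuously to $\widehat A$ (it preserves the $I$-adic filtration by $\Gamma$-invariance of $I$). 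This identification is what makes the statement meaningful and is where I would be most careful, since the whole proposition rests on the completion of the crossed product agreeing with the crossed product of the completion.

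Next I would establish the conjugacy-class decomposition at the level of completed complexes. The point is that the decomposition of the Hochschild, cyclic, and periodic cyclic complexes of $A \rtimes \Gamma$ indexed by $\conj{\gamma}$ (recalled in the Crossed-products subsection) is a decomposition by $\CC$-subspaces that is preserved by the $b$, $b'$, and $B$ operators, and crucially it is compatible with the $J$-adic filtration: the filtration $F_n = J^n(\cdot)$ respects each $\conj{\gamma}$-component because the $\Gamma$-grading of a tensor $a_0\gamma_0 \otimes \cdots \otimes a_k\gamma_k$ (namely the class of $\gamma_0\cdots\gamma_k$) is unchanged by multiplication by elements of $I \subset A$. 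Therefore completing commutes with taking the direct sum over conjugacy classes (a finite sum, since $\Gamma$ is finite), and we obtain $\tHd_q(\widehat A \rtimes \Gamma) \cong \bigoplus_{\conj{\gamma}} \tHd_q(\widehat A \rtimes \Gamma)_{\gamma}$, and similarly for $\tHc$ and $\tHp$.

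The heart of the proof is then to show, for a fixed $\gamma$, that $\tHd_q(\widehat A \rtimes \Gamma)_{\gamma} \cong \tHd_q(\widehat A, \gamma)^{C_\gamma}$, and the corresponding statements for $\tHc$ and $\tHp$. Here the plan is to run the same argument that proves Proposition \ref{prop.decomp} but on the completed, filtered complexes, checking at each step that all the maps involved are \emph{bounded} (hence extend to completions) and that all resolutions used are \emph{admissible} in the sense of the subsection, \ie\ possess bounded $\CC$-linear contractions. Concretely, the $\gamma$-component of the completed complex of $\widehat A \rtimes \Gamma$ is identified, via the standard averaging/shuffle maps of \cite{FeiginTsygan, nistorInvent90}, with the $C_\gamma$-invariants of the $\gamma$-twisted completed complex of $\widehat A$; since $C_\gamma \subset \Gamma$ is finite and we work over $\CC$, the projection onto invariants is the averaging idempotent $\frac{1}{|C_\gamma|}\sum_{c \in C_\gamma} c$, which is bounded (indeed preserves the filtration, as each $c$ acts by a $\CC$-algebra automorphism preserving $I$) and exact, so it commutes with completion and with passing to homology. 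This reduces the identity to the ungraded statement $\tHd_q(\widehat A, \gamma) \cong$ homology of the completed $\gamma$-twisted complex, which is exactly the content secured by Theorem \ref{theorem.completion} together with Equation \eqref{eq.Hubl}.

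The main obstacle I anticipate is the \textbf{boundedness and admissibility bookkeeping} in this last step: one must verify that the quasi-isomorphisms implementing Proposition \ref{prop.decomp} (the reduction of the crossed-product complex to the twisted complex and the extraction of $C_\gamma$-invariants) are realized by filtration-preserving or at least bounded maps, so that Lemma \ref{lemma.qi.complete} applies and the homology isomorphisms survive completion. For Hochschild homology this is the cleanest, because Theorem \ref{theorem.completion} directly gives $\tHd_*(\widehat A, g) \cong \Hd_*(A,g) \otimes_\kk \widehat\kk$ as a $\kk$-module isomorphism and finite-dimensional averaging is automatically bounded. For periodic cyclic homology the argument is routed instead through Seibt's Theorem \ref{Seibt}, which identifies $\tHp_*$ of the $J$-adic completion with $\Hp_*(B/J)$; here one invokes the $\Gamma$-equivariance of the Goodwillie/Seibt isomorphisms and the finiteness of $\Gamma$ to commute the idempotent projection past the isomorphism, so that invariants and completion interchange. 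I expect no genuinely new difficulty beyond confirming that every map in the classical (uncompleted) proof of Proposition \ref{prop.decomp} is bounded with respect to the $I$-adic filtrations, which is forced by the $\Gamma$-invariance of $I$ and the linearity of the cyclic operators.
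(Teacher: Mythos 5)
Your proposal is correct and follows essentially the same route as the paper: identify $\widehat{A}\rtimes\Gamma$ with the $J$-adic completion of $A\rtimes\Gamma$ directly from the definitions (via $J^n = I^n\rtimes\Gamma$, using $\Gamma$-invariance of $I$), decompose the completed complexes over the finitely many conjugacy classes (completion commutes with this finite direct sum), and transport the averaging quasi-isomorphism of Proposition \ref{prop.decomp} to the completed setting, which works because that chain map is filtration-preserving. The one place you diverge is periodic cyclic homology: the paper does not route $\tHp$ through Seibt's theorem at all. The single averaging chain map between $\widehat{\calb}_*(\widehat{A\rtimes\Gamma})_\gamma$ and $\widehat{\calb}_*(\widehat{A},b_\gamma)^{C_\gamma}$ is compatible with the cyclic operators, so it yields the $\tHd$, $\tHc$, and $\tHp$ isomorphisms simultaneously. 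Your Seibt detour is not only unnecessary but also introduces a dependency the paper never establishes: to conclude $\tHp_q(\widehat{A}\rtimes\Gamma)_\gamma \simeq \tHp_q(\widehat{A},\gamma)^{C_\gamma}$ from Seibt's identification $\tHp_*(\widehat{B})\simeq \Hp_*(B/J)$ you would still need a \emph{twisted} ($\gamma$-equivariant) analogue of Theorem \ref{Seibt} relating $\tHp_q(\widehat{A},\gamma)$ to $\Hp_q(A/I,\gamma)$, which is not in the paper. Your paragraph-three plan (run the uncompleted argument on the completed complexes with boundedness checks) already covers $\tHp$ and is exactly what the paper does; it is the cleaner of your two suggestions.
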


\begin{proof} 
The first statement is a direct consequence of the definitions.
Observe that $\widehat{\calb}_n(\widehat{A\rtimes \Gamma}) =(A\rtimes
\Gamma)^{\widehat{\otimes}n+1}$ is the same as
$A^{\widehat{\otimes}n+1}\rtimes\Gamma^{n+1}$ and this space admits a
known decomposition into conjugacy classes with respect to the action
of $\Gamma$, namely,
\begin{equation*}
\widehat{\calb}_q(\widehat{A\rtimes \Gamma}) \, = \,
\bigoplus_{\conj{\gamma} \in \conj{\Gamma}}
\widehat{\calb}_q(\widehat{A\rtimes \Gamma})_\gamma,\end{equation*} where
$(a_0,a_1,\ldots,a_n)(g_0,g_1,\ldots,g_n) \in
\widehat{\calb}_*(\widehat{A\rtimes \Gamma})_\gamma$ exactly when $g_0
g_1\ldots g_n\in \langle\gamma\rangle$.  As in Proposition
\ref{prop.decomp}, the chain map,
\begin{equation*}
  (a_0,a_1,\ldots,a_n)(g_0,g_1,\ldots,g_n) \, \mapsto \,
\frac{1}{|C_\gamma|} \sum_{h\in C_\gamma}(\gamma
hg_0^{-1}(a_0),h(a_1), \ldots,hg_1g_2 \cdots g_{n-1}(a_n))
\end{equation*}
now provides an quasi-isomorphism between
$\widehat{\calb}_*(\widehat{A\rtimes \Gamma})_\gamma$ and
$\widehat{\calb}_*(\widehat A,b_{\gamma})^{C_\gamma}$.
\end{proof}

We have the following corollary.

\begin{corollary}\label{cor.comp.loc}
In the framework of Proposition \ref{prop.decomp2}, let us assume
that $\gamma \in \Gamma$ acts freely on $\Spec(\kk)$. Then
\begin{equation*}
  \tHd_q(\widehat A \rtimes \Gamma)_{\gamma} \, = \, \tHd_q(\widehat A
  \rtimes \Gamma)_{\gamma} \, = \, \tHd_q(\widehat A \rtimes
  \Gamma)_{\gamma} \, = \, 0\,.
\end{equation*}
\end{corollary}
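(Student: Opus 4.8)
The plan is to reduce the assertion to the vanishing of the twisted homology groups of $\widehat A$ alone, to kill the Hochschild group by combining the completion theorem with the support computation, and then to propagate this vanishing upward through the mixed complex to the cyclic and periodic cyclic groups; these three are the components displayed in the statement, in the sense of Proposition \ref{prop.decomp2}.

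First I would invoke Proposition \ref{prop.decomp2}, which supplies natural isomorphisms $\tHd_q(\widehat A \rtimes \Gamma)_\gamma \simeq \tHd_q(\widehat A, \gamma)^{C_\gamma}$, $\tHc_q(\widehat A \rtimes \Gamma)_\gamma \simeq \tHc_q(\widehat A, \gamma)^{C_\gamma}$, and $\tHp_q(\widehat A \rtimes \Gamma)_\gamma \simeq \tHp_q(\widehat A, \gamma)^{C_\gamma}$. Since the functor of $C_\gamma$-invariants sends the zero module to zero, it suffices to prove $\tHd_q(\widehat A, \gamma) = \tHc_q(\widehat A, \gamma) = \tHp_q(\widehat A, \gamma) = 0$ for every $q$.

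Next, for the Hochschild groups I would apply Theorem \ref{theorem.completion} with $g = \gamma$ (legitimate, as $A$ is of finite type), giving the $\kk$-module isomorphism $\tHd_q(\widehat A, \gamma) \simeq \Hd_q(A, \gamma)\otimes_\kk \widehat\kk$. The hypothesis that $\gamma$ acts freely on $\Spec(\kk)$ says precisely that $\gamma^{-1}(\mfk m) \neq \mfk m$ for every maximal ideal $\mfk m$ of $\kk$, which is exactly the hypothesis of the second half of Corollary \ref{cor.zero}; hence $\Hd_q(A, \gamma) = 0$ for all $q$, and tensoring with $\widehat\kk$ yields $\tHd_q(\widehat A, \gamma) = 0$.

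It remains to pass from Hochschild to cyclic and periodic cyclic homology. These are the homology of the mixed complex $(\widehat X_*, b_\gamma, B_\gamma)$, where $\widehat X_* = \widehat{\calb}_*(\widehat A)/(\gamma-1)\widehat{\calb}_*(\widehat A)$ is the $\conj{\gamma}$-coinvariant complex; since over $\CC$ coinvariants by the finite cyclic group generated by $\gamma$ is exact, the $b_\gamma$-homology of $\widehat X_*$ is the corresponding coinvariant quotient of $\tHd_*(\widehat A, \gamma)$ and is therefore also zero. With the Hochschild homology of the mixed complex identically zero, I would finish in either of two ways. The quick way is the Connes $SBI$ sequence: the periodicity map $S$ is then an isomorphism in every degree, and as the cyclic complex is a bounded-below direct sum, descending induction forces $\tHc_n(\widehat A, \gamma) = 0$, after which the periodic groups vanish because they sit between the $\lim$ and $\lim^1$ of the (now zero) cyclic groups along $S$. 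The main obstacle is precisely this last point: $\tHp$ is computed from the \emph{product} total complex, so one must be sure the $\lim^1$ term cannot resurrect a nonzero class. I expect the cleanest remedy is to build an explicit contraction of $b_\gamma + B_\gamma$ out of a $\CC$-linear contraction $h$ of the acyclic complex $(\widehat X_*, b_\gamma)$ via the perturbation series $\sum_{n\ge 0}(-1)^n h (B_\gamma h)^n$; this converges termwise on the product complex because $B_\gamma h$ raises the Hochschild degree by two while $X_j = 0$ for $j < 0$, so only finitely many terms contribute in each degree, and verifying the homotopy identity $(b_\gamma+B_\gamma)H + H(b_\gamma+B_\gamma) = 1$ is the single genuinely computational step.
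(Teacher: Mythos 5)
Your proof is correct and follows the same strategy as the paper's own proof: kill the twisted Hochschild homology using freeness of the action (Corollary \ref{cor.zero}), transfer through the conjugacy-class decomposition, and climb to cyclic and periodic cyclic homology via the SBI sequence. In fact, you are more careful than the paper on the two points where its proof is terse. First, the paper's proof works entirely with the uncompleted groups — it cites Proposition \ref{prop.decomp} and deduces $\Hd_q(A\rtimes\Gamma)_\gamma = \Hc_q(A\rtimes\Gamma)_\gamma = 0$ — leaving implicit the passage to the completed groups $\tHd$, $\tHc$, $\tHp$ that the statement actually concerns; you make this passage explicit by combining Theorem \ref{theorem.completion} (which gives $\tHd_q(\widehat A,\gamma) \simeq \Hd_q(A,\gamma)\otimes_\kk\widehat\kk = 0$) with Proposition \ref{prop.decomp2}, which is the right way to do it. Second, the paper simply asserts that the vanishing of cyclic homology ``in turn implies'' the vanishing of periodic cyclic homology; you rightly flag that $\tHp$ is computed from a product total complex and address the $\lim^1$ issue. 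Note, though, that your explicit perturbation contraction $H = \sum_{n\ge 0}(-1)^n h(B_\gamma h)^n$ is more than is strictly needed: once all the cyclic groups vanish, the Milnor sequence
\begin{equation*}
 0 \to {\displaystyle\lim_{\longleftarrow}}\!{}^{1}\, \tHc_{*+2k+1}(\widehat A,\gamma)
 \to \tHp_*(\widehat A,\gamma) \to
 {\displaystyle\lim_{\longleftarrow}}\, \tHc_{*+2k}(\widehat A,\gamma) \to 0
\end{equation*}
(inverse system along $S$) already gives $\tHp_* = 0$, since $\lim^1$ of a system of zero modules is zero. Your contraction argument is nonetheless valid: the series is finite in each output degree because $B_\gamma h$ raises degree by two and the complex vanishes in negative degrees, and the identity $(b_\gamma+B_\gamma)H + H(b_\gamma+B_\gamma) = 1$ follows from the commutation relation $b_\gamma(B_\gamma h)^n = -(B_\gamma h)^{n-1}B_\gamma + (B_\gamma h)^n b_\gamma$.
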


\begin{proof}
Corollary \ref{cor.zero} gives that $\Hd_q(A, \gamma) = 0$
for all $q$. Proposition \ref{prop.decomp} then gives that 
$\Hd_q(A \rtimes \Gamma)_{\gamma} = 0$ for all $q$. Connes\rp\ SBI-long
exact sequence relating the cyclic and Hochschild homologies then gives
that $\Hc_q(A \rtimes \Gamma)_{\gamma} = 0$ for all $q$.  This in
turn implies the vanishing of the periodic cyclic homology.
\end{proof}

\section{Crossed products of regular functions}
Our focus from now on will be on the case when $A$ is a finitely
generated commutative algebra and $\Gamma$ is finite. Goodwillie\rp s
theorem then allows us to reduce to the case when $A = \calo[V]$, where $V$ is a
complex, affine algebraic variety and where $\calo[V]$ is the ring of
regular functions on $V$.  We will thus concentrate on crossed
products of the form $\calo[V] \rtimes \Gamma$, where $\Gamma$ is a
discrete group acting by automorphisms on $\calo[V]$. We begin with a
discussion of the case when there is, in fact, no group.

\subsection{The commutative case}
In this section we recall some known constructions and results, 
see \cite{Baranovsky, Brylinski, Emmanouil, Farinati, FeiginTsygan, Kaledin, 
LodayBook, LodayQuillen} 
and the references therein. We mostly follow (and expand) the method in 
\cite{BrylinskiNistor}, where rather the case of $\CI$-functions 
was considered.
We begin by fixing some notation. As usual, we shall denote by
$\Omega^q(X)$ the space of algebraic $q$--forms on a smooth, algebraic
variety $X$. If $Y \subset X$ is a smooth subvariety, we denote by
$\omega_{\vert Y} \in \Omega^q(Y)$ the restriction of $\omega \in
\Omega^q(X)$ to $Y$.

In this subsection, we will consider affine, complex algebraic
varieties $X$ and $V$, where $X$ is assumed to be smooth, $V \subset
X$, and $I \subset \calo[X]$ is the ideal defining $V$ in $X$.  Recall
that $\calo[X]$ denotes the ring of regular functions on $X$, it is
the quotient of a polynomial ring by the ideal defining $X$. Let $g :
\calo[X] \to \calo[X]$ be an endomorphism such that
$g\sp{-1}(I)=I$. We shall denote by $X^{g}$ the set of fixed points of $g : X
\to X$ and assume that it is also a smooth variety. We then let $\chi_g$
be the {\em twisted} Connes-Hochschild-Kostant-Rosenberg map $\chi_g :
\calo[X]^{\otimes n+1} \to \Omega^{n}(X^{g})$,
\begin{equation}\label{eq.Hochschild-Kostant-Rosenberg}
  \chi_g(a_0\otimes \ldots \otimes a_n) \to \frac{1}{n!}
  a_0da_1\ldots da_n \vert_{X^{g}} \,,
\end{equation}
where the restriction to $X\sp{g}$ is defined since $X\sp{g}$ is
  smooth. The map $\chi_g$ descends to a map
\begin{equation}\label{eq.Hochschild-Kostant-Rosenberg&}
  \chi_g : \Hd_n(\calo[X], g) \, \to \, \Omega^{n}(X^{g}) \,, \quad n \ge 0 \; ,
\end{equation}
for any fixed $n$. When $g = 1$, the identity automorphism, the map
$\chi_g$ is known to be an isomorphism for each $n$
\cite{LodayQuillen}. This fundamental result has been proved in
the smooth, compact manifold case in \cite{ConnesIHES}. See also
\cite{Hubl0, Hubl1, Karoubi87, LodayBook}. 

We want to identify the groups $\Hd_*(A, g)$, $A = \calo[X]$, and
their completions with respect to the powers of $I\sp{n}$.  The idea
is to first use localization to reduce to the case when $X$ is a
vector space, $X = \CC^{N}$, and $g$ acts linearly on $\CC^{N}$. In
that case, the calculation is achieved by constructing a more
appropriate free resolution of the module $A_g$ with free $A\sp{e}$
modules using Koszul complexes (here $A = \calo[X]$).

While the proof is rather lenghty, it is standard and follows a well
understood path in algebraic geometry and commutative
algebra. However, this path is not so well known and understood when
it comes to cyclic homology calculations, so we include all the
details for the benefit of the reader.  The reader with more
experience (or less patience), can skip directly to Proposition
\ref{prop.brylinski}.  Our first step is to construct the relevant
Koszul complex.


\begin{definition}\label{def.koszul}
Let $R$ be a commutative complex algebra, $M$ be an $R$-module, $E$ be
a finite dimensional complex vector space, and $f: E \to R$ be a
linear map. Then to this data we associate the Koszul complex $(\KK_*,
\pa) = (\KK_*(M, E, f), \pa)$, with differential $\partial:\KK_j\to
\KK_{j-1}$, defined by
\begin{equation*}
\begin{gathered}
  \KK_j = \KK_j(M, E, f) := M \otimes \wedge^j E \ \mbox{
    and}\\ \partial ( m \otimes e_{i_1} \wedge \ldots \wedge e_{i_j} )
  \, = \, \sum_{k=1}^j (-1)^{k-1} f(e_{i_k}) m \otimes e_{i_1} \wedge
  \ldots \wedge \widehat{e}_{i_k}\wedge \ldots \wedge e_{i_j} \,,
\end{gathered}
\end{equation*}
where $m \in M$ and $e_1,\ldots,e_n$ is an arbitrary basis of $E$.
\end{definition}

In the rest of this subsection, by $E$ and $F$, sometimes decorated
with indices, we shall denote {\em finite-dimensional, complex
vector spaces.}
The definition above is correct as explained in the following remark.

\begin{remark}\label{rem.diff.koszul}
The definition of the differential in the Koszul comples is, in fact,
independent of the choice of a basis of $E$. Indeed, if we denote by
$i_\xi : \wedge^j E \to \wedge^{j-1} E$ the contraction with $\xi \in
E\sp{*}$, then $\pa = \sum_{j} f(e_j) \otimes i_{e_j\sp{*}}$, where
$e_j\sp{*}$ is the dual basis of $e_j$, and this definition is
immediately seen to be independent of the choice of the basis $(e_j)$
of $E$.
\end{remark}

We need to look first at the simplest instances of Koszul complexes.

\begin{example}\label{ex.dim.one}
Let $E = \CC$. Then $f : E \to R$ is completely determined by $r :=
f(1) \in R$. Then, up to an isomorphism, $\KK_*(R, \CC, f)$ is
isomorphic to the complex
\begin{equation*}
 0 \longleftarrow M \overset{r}\longleftarrow M \longleftarrow 0
 \longleftarrow 0 \ \ldots
\end{equation*}
that has only two non-zero modules:\ $\KK_0 \simeq \KK_1 \simeq M$. We
are, in fact most interested in the case $M = R = \calo[\CC] \simeq
\CC[x]$ (we identify the ring of polynomial functions on $\CC$ with the
ring of polynomials in one indeterminate $X$) and $r = f(1) = x$, in
which case the Koszul complex $\KK_*(\CC[x], \CC, x)$ provides a free
resolution of the $\CC[x]$ module $\CC \simeq \CC[x]/x\CC[x]$ (thus on
$\CC$ we consider the $\CC[x]$-module structure given by \dlp
evaluation at 0\drp, that is, $P \cdot \lambda = P(0) \lambda$ for $P
\in \CC[x]$ and $\lambda \in \CC$). The other case in which we are
interested is the complex $\KK_*(\CC[x], \CC, 0)$, in which case, the
homology coincides with $\KK_*(\CC[x], \CC, x)$ itself, because the
differential is zero.
\end{example}

We treat the other Koszul complexes that we need by combining the
two basic examples considered in Example \ref{ex.dim.one} above,
using also the functoriality of the Koszul complex. In all our applications,
we will have $M = R$, so we assume from now on that this is the case.
The next remark deals with functoriality.

\begin{remark}\label{rem.functoriality}
First, a ring morphism $\phi: R \to R_1$ induces a morphism of
complexes
\begin{equation*}
 \phi_* \, : \, \KK_*(R, E, f) \, \to \, \KK_*(R_1, E,\phi\circ f)
\end{equation*}
given by the formula $\phi_*(r \otimes e_{i_1} \wedge \ldots \wedge
e_{i_j}) = \phi(r) \otimes e_{i_1} \wedge \ldots \wedge e_{i_j}$.
Then we notice also that a linear map $g : E_1 \to E$ induces a
natural morphism of complexes
\begin{equation*}
 g_* : \KK_*(R, E_1, f \circ g) \to \KK_*(R, E, f)
\end{equation*}
given by the formula $g_*(r \otimes e_{i_1} \wedge \ldots \wedge
e_{i_j}) = r \otimes g(e_{i_1}) \wedge \ldots \wedge g(e_{i_j})$. If
$g$ is an isomorphism, then $g_*$ is an isomorphism as well.
\end{remark}

Let us denote by $\overset{.}{\otimes}$ the graded tensor product of
complexes. Koszul complexes can be simplified using the following well
known lemma whose proof is a direct calculation.

\begin{lemma}\label{one}
Assume that $R = R_1 \otimes R_2$, $E = E_1 \oplus E_2$, $f_i : E_i \to R_i$
linear, and $f = f_1 \otimes 1 + 1 \otimes f_2 := ( f_1 \otimes 1 , 1 \otimes f_2 )$. 
Then
\begin{equation*}
 \KK_*(R, E, f) \, \simeq \, \KK_*(R_1, E_1, f_1) \overset{.}{\otimes}
 \KK_*(R_2, E_2, f_2) \,.
\end{equation*}
\end{lemma}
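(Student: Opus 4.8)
The plan is to prove this by exhibiting an explicit isomorphism of complexes and tracking the differentials. We are given $R = R_1 \otimes R_2$, $E = E_1 \oplus E_2$, and $f = f_1 \otimes 1 + 1 \otimes f_2$ (interpreting each $f_i$ composed with the inclusion $E_i \hookrightarrow E$ and the structure map $R_i \to R$). The graded tensor product of Koszul complexes has, in degree $j$, the module $\bigoplus_{p+q=j} (R_1 \otimes \wedge^p E_1) \otimes (R_2 \otimes \wedge^q E_2)$, while $\KK_j(R,E,f) = R \otimes \wedge^j E$. First I would use the canonical isomorphisms $R = R_1 \otimes R_2$ and $\wedge^j(E_1 \oplus E_2) \simeq \bigoplus_{p+q=j} \wedge^p E_1 \otimes \wedge^q E_2$, together with the commutativity isomorphism swapping the middle tensor factors $\wedge^p E_1$ and $R_2$, to build a degree-preserving linear isomorphism
\begin{equation*}
 \Phi \, : \, \KK_*(R_1, E_1, f_1) \overset{.}{\otimes} \KK_*(R_2, E_2, f_2) \, \longrightarrow \, \KK_*(R, E, f) \,.
\end{equation*}

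Next I would verify that $\Phi$ is a chain map. Here the sign conventions are the crucial point. The differential on the graded tensor product acts on a homogeneous element $x_1 \otimes x_2$, with $x_1$ of degree $p$, by the Leibniz rule $\pa(x_1 \otimes x_2) = \pa_1(x_1) \otimes x_2 + (-1)^p x_1 \otimes \pa_2(x_2)$. On the other side, using the basis-free description from Remark \ref{rem.diff.koszul}, the differential on $\KK_*(R,E,f)$ is $\pa = \sum_j f(e_j) \otimes i_{e_j^*}$, and because $f$ splits as $f_1 \otimes 1 + 1 \otimes f_2$ and the contractions by $E_1^*$ and $E_2^*$ act on the respective exterior factors, this $\pa$ decomposes into a piece coming from $f_1$ and a piece coming from $f_2$. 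The basis-free formula for the differential makes this decomposition transparent and independent of any choices, which is exactly why that remark was recorded.

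The main obstacle I expect is bookkeeping of the Koszul signs: when the differential $\pa_2$ associated to $f_2$ is made to act, it must first pass the contraction operator across the factor $\wedge^p E_1$ (equivalently, across $x_1$ of degree $p$), and this produces precisely the sign $(-1)^p$ demanded by the graded Leibniz rule, together with matching the $(-1)^{k-1}$ signs in the definition of $\pa$ from Definition \ref{def.koszul}. I would handle this by checking the claim on decomposable generators $r_1 r_2 \otimes (e_{i_1} \wedge \cdots \wedge e_{i_p}) \wedge (e'_{j_1} \wedge \cdots \wedge e'_{j_q})$ with $e_{i_k} \in E_1$ and $e'_{j_l} \in E_2$, and confirming that reordering the wedge factors to separate the $E_1$- and $E_2$-parts contributes the same sign on both sides of the equation $\Phi \circ \pa^{\otimes} = \pa \circ \Phi$. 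Once the signs are confirmed to agree on generators, linearity and the fact that $\Phi$ is an isomorphism in each degree give the isomorphism of complexes, completing the proof; since the statement itself records that the proof is a direct calculation, no deeper input is needed.
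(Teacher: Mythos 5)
Your proposal is correct and is exactly the ``direct calculation'' that the paper invokes without writing out: the canonical identification $R_1\otimes R_2\otimes\wedge^p E_1\otimes\wedge^q E_2 \simeq \bigoplus_{p+q=j}\KK_p(R_1,E_1,f_1)\otimes\KK_q(R_2,E_2,f_2)$, followed by the sign check on decomposable generators, where the $(-1)^{k-1}$ signs in Definition \ref{def.koszul} applied to a wedge ordered with the $E_1$-factors first produce precisely the Koszul sign $(-1)^p$ of the graded Leibniz rule. Your handling of the signs via the basis-free contraction formula of Remark \ref{rem.diff.koszul} is sound, so nothing is missing.
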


Let $E$ be a vector space and let $E^*$ be its dual. We denote by
$\calo[E]$ the ring of regular functions on $E$, as usual. In this
case, $\calo[E]$ is isomorphic to the ring of polynomial functions in
$\dim(E)$ variables. Then there is a natural inclusion
\begin{equation}
 i: E^*\to \calo[E] \,.
\end{equation}
Although we shall not use that, let us mention, in order to provide
some intuition, that the differential of the resulting Koszul complex
$\KK_*(\calo[E],E^*,i)$ is the Fourier transform of the standard de
Rham differential. Recall that in this
subsection $E$ and $F$ (sometimes with indices)
denote finite-dimensional, complex vector spaces.
Its homology is given by the following lemma.

\begin{lemma}\label{two}
Let  $i : E\sp{*}
\to \calo[E]$ be the canonical inclusion. Then the resulting complex
$\KK_*(\calo[E], E\sp{*}, i)$ has homology
\begin{equation}
  H_q \KK_*(\calo[E], E\sp{*}, i) \ \simeq \
  \begin{cases}
     \ 0 & \mbox{ if } \    q >0 \\                    
     \, \CC & \mbox{ if }  \   q = 0 \,,                   
  \end{cases}
\end{equation}
with the isomorphism for $q = 0$ being given by the the evaluation at
$0 \in E$ morphism $\KK_0(\calo[E], E\sp{*}, i) = \calo[E] \to \CC$.
\end{lemma}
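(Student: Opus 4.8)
The plan is to choose coordinates, reduce to the one-dimensional case treated in Example \ref{ex.dim.one}, and then assemble everything using the multiplicativity of Koszul complexes from Lemma \ref{one} together with the K\"unneth formula over the field $\CC$.

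First, by Remark \ref{rem.diff.koszul} the complex does not depend on the choice of basis of $E\sp*$, so I may fix a basis $\xi_1, \ldots, \xi_N$ of $E\sp*$ (with $N = \dim E$) and let $x_k := i(\xi_k) \in \calo[E]$ be the corresponding coordinate functions. Under these coordinates $\calo[E] \simeq \CC[x_1] \otimes \cdots \otimes \CC[x_N]$, and the splitting $E\sp* = \CC\xi_1 \oplus \cdots \oplus \CC\xi_N$ with $i(\xi_k) = x_k$ matches exactly the hypotheses of Lemma \ref{one}, with $R_k = \CC[x_k]$, $E_k = \CC\xi_k$, and $f_k(\xi_k) = x_k$. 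Applying Lemma \ref{one} inductively then yields a natural isomorphism of complexes
\[
\KK_*(\calo[E], E\sp*, i) \;\simeq\; \KK_*(\CC[x_1], \CC, x_1) \overset{.}{\otimes} \cdots \overset{.}{\otimes} \KK_*(\CC[x_N], \CC, x_N) \,.
\]

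By Example \ref{ex.dim.one}, each factor $\KK_*(\CC[x_k], \CC, x_k)$ is the two-term complex $\CC[x_k] \overset{x_k}{\longleftarrow} \CC[x_k]$, a free resolution of $\CC \simeq \CC[x_k]/x_k\CC[x_k]$, so its homology is $\CC$ in degree $0$ and vanishes in positive degrees. Since every module $\KK_j$ is a $\CC$-vector space and every differential is $\CC$-linear, the K\"unneth theorem over the field $\CC$ carries no $\Tor$ correction terms, so
\[
H_q\big(\KK_*(\calo[E], E\sp*, i)\big) \;\simeq\; \bigoplus_{q_1+\cdots+q_N=q} H_{q_1}\otimes\cdots\otimes H_{q_N} \,,
\]
where $H_{q_k}$ is the homology of the $k$-th factor. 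As each factor contributes $\CC$ only in degree $0$, the right-hand side equals $\CC$ for $q=0$ and vanishes for $q>0$, which is the asserted homology.

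Finally I would identify the degree-zero class explicitly. In degree $0$ we have $\KK_0 = \calo[E]$ and the image of $\partial$ is $\sum_k x_k \calo[E] = (x_1, \ldots, x_N)$, the maximal ideal of functions vanishing at $0 \in E$; hence $H_0 \simeq \calo[E]/(x_1,\ldots,x_N) \simeq \CC$ via evaluation at $0$, exactly as stated. I expect the only genuinely delicate point to be bookkeeping: checking that the iterated use of Lemma \ref{one} really presents $i$ as the sum $f_1 \otimes 1 + \cdots + 1 \otimes f_N$ of coordinate inclusions and that the identification respects the grading. Once that is in place, the K\"unneth computation and the degree-$0$ identification are routine precisely because we are working over a field, so that no higher $\Tor$ terms can appear.
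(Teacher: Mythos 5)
Your proof is correct and follows essentially the same route as the paper's: reduce to coordinates, split the Koszul complex into one-dimensional factors via Lemma \ref{one} and Example \ref{ex.dim.one}, and conclude with the K\"unneth formula over $\CC$. Your explicit identification of $H_0$ as $\calo[E]/(x_1,\ldots,x_N) \simeq \CC$ via evaluation at $0$ is a small but welcome addition that the paper leaves implicit.
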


\begin{proof} 
We can assume that $E = \CC\sp{n}$, by the functoriality of the Koszul
complexes (Remark \ref{rem.functoriality}).  If $n = 1$, we may
identify $\calo[E] = \CC[x]$, in which case the result has already
been proved (see Example \ref{ex.dim.one}).  In general, we can
proceed by induction by writting $\CC\sp{n} = \CC\sp{n-1} \oplus \CC$
and using the fact that the Koszul complex associated to $E =
\CC\sp{n}$ is the tensor product of the Koszul complexes associated to
$\CC\sp{n-1}$ and $\CC$ and then invoquing Lemma \ref{one}.  The
calculation of the homology groups is completed using the
K\umlaut{u}nneth formula, which gives that the homology of a tensor
product of complexes (over $\CC$) is the tensor product of the
individual homologies \cite{MacLane}. 
See also \cite{GelfandManinBook}.
\end{proof}

It is useful to state explicitly the following direct consequence of
the above lemma and of the functoriality of the Koszul complexes
(Remark \ref{rem.functoriality}).

\begin{corollary}\label{cor.two}
Let $R = \calo[F]$ 
and let $g : E \to F\sp{*}$ be an isomorphism. Let $R_1$ be a
commutative complex algebra.  Denote by $f := i \circ g : E \to
\calo[F]$, then 
\begin{equation}
  H_q \KK_*(R \otimes R_1, E, f) \ \simeq \
  \begin{cases}
     \ 0 & \mbox{ if } \    q >0 \\                    
     \, R_1 & \mbox{ if }  \   q = 0 \,,                   
  \end{cases}
\end{equation}
with the isomorphism for $q = 0$ being induced by the the evaluation
at $0 \in F$ morphism $\KK_0(R \otimes R_1, E, f) = R \otimes R_1 =
\calo[F] \otimes R_1 \to \CC \otimes R_1 = R_1$.
\end{corollary}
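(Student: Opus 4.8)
The plan is to deduce this corollary directly from Lemma~\ref{two} by combining the two functoriality observations of Remark~\ref{rem.functoriality} with a base-change (tensoring with $R_1$) argument. The point is that the Koszul complex in the statement differs from the one computed in Lemma~\ref{two} only by (i) the choice of identification $g$ of $E$ with $F^*$, and (ii) the extra tensor factor $R_1$.

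First I would dispose of the isomorphism $g : E \to F^*$. Since $f = i \circ g$ where $i : F^* \to \calo[F]$ is the canonical inclusion, the second functoriality statement in Remark~\ref{rem.functoriality} applied to the isomorphism $g$ yields an isomorphism of complexes
\begin{equation*}
  g_* : \KK_*(R, E, i \circ g) \xrightarrow{\ \simeq\ } \KK_*(R, F^*, i)\,,
\end{equation*}
and the analogous statement holds after tensoring the base ring $R = \calo[F]$ with $R_1$. Thus, up to this canonical isomorphism, it suffices to compute $H_* \KK_*(\calo[F] \otimes R_1, F^*, i)$, where now $i : F^* \to \calo[F] \hookrightarrow \calo[F] \otimes R_1$ is the canonical inclusion followed by $a \mapsto a \otimes 1$.

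Next I would handle the extra factor $R_1$ by recognizing $\KK_*(\calo[F] \otimes R_1, F^*, i)$ as the base change of $\KK_*(\calo[F], F^*, i)$ along $\calo[F] \to \calo[F] \otimes R_1$. Concretely, because the module in degree $j$ is $(\calo[F] \otimes R_1) \otimes \wedge^j F^* \simeq \bigl(\calo[F] \otimes \wedge^j F^*\bigr) \otimes R_1$ and the differential acts only through the $\calo[F]$-factor (it is $\pa \otimes \mathrm{id}_{R_1}$), the whole complex is $\KK_*(\calo[F], F^*, i) \otimes R_1$ as a complex of $\CC$-vector spaces. Since we are working over the field $\CC$, tensoring with $R_1$ is exact, so it commutes with taking homology; hence
\begin{equation*}
  H_q \KK_*(\calo[F] \otimes R_1, F^*, i) \ \simeq \ \bigl(H_q \KK_*(\calo[F], F^*, i)\bigr) \otimes R_1\,.
\end{equation*}
Invoking Lemma~\ref{two} (with $E$ there replaced by $F$, so $F^*$ plays the role of the dual space), the right-hand side is $0$ for $q > 0$ and $\CC \otimes R_1 \simeq R_1$ for $q = 0$, with the degree-zero isomorphism induced by the evaluation-at-$0$ map $\calo[F] \to \CC$ tensored with $\mathrm{id}_{R_1}$. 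Tracing this identification back through $g_*$ gives exactly the stated evaluation map $\KK_0(R \otimes R_1, E, f) = \calo[F] \otimes R_1 \to \CC \otimes R_1 = R_1$, which completes the argument.

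I do not expect any genuine obstacle here: this is a formal consequence of Lemma~\ref{two}. The only point requiring a little care is bookkeeping — making sure the evaluation map in degree zero is transported correctly through the isomorphism $g_*$ and through the exactness of $- \otimes R_1$, so that the claimed $q=0$ description matches on the nose. Everything else is routine once exactness of base change over $\CC$ is used to commute $\otimes R_1$ past homology.
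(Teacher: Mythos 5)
Your proof is correct and takes essentially the same route the paper intends: the paper states this corollary without a separate proof, declaring it a direct consequence of Lemma \ref{two} and the functoriality of Remark \ref{rem.functoriality}, which is precisely the argument you carry out. Your explicit treatment of the extra factor $R_1$ --- identifying the complex with $\KK_*(\calo[F],F^*,i)\otimes R_1$ and using exactness of $-\otimes R_1$ over $\CC$ (equivalently, Lemma \ref{one} with the trivial second factor) --- simply fills in the bookkeeping the paper leaves implicit.
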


Most of the time, the above corollary will be used for $R_1 = \CC$.
We now prove the result that we need for K\umlaut{u}nneth complexes.

\begin{proposition}\label{prop.res}
Let $h : E_1 \to E\sp{*}$ be a linear map and $f := i \circ h : E_1
\to \calo[E]$, where $i : E\sp{*} \to \calo[E]$ is the canonical
inclusion.  Denote by $F \subset E$ the annihilator of $h(E_1) \subset
E\sp{*}$ and by $F_1 \subset E_1$ the kernel of $h$.  Then the
restriction morphism $\calo[E] \to \calo[F]$ and the choice of a
projection $E_1 \to F_1$ define isomorphisms
\begin{equation*}
    res \, : \, H_q \KK_*(\calo[E], E_1, f) \, \simeq \, H_q
    \KK_*(\calo[F], F_1, 0) \, = \, \calo[F] \otimes \wedge^q F_1 \;.
\end{equation*}
These isomorphisms are independent of the choice of the projection
$E_1 \to F_1$.
\end{proposition}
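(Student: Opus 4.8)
The plan is to reduce the computation to the two product factors governed by Lemma \ref{one} and Corollary \ref{cor.two}. First I would choose a linear complement $G$ to $F$ in $E$ and a linear complement $G_1$ to $F_1 = \ker h$ in $E_1$; the latter choice is precisely the choice of a projection $\pi : E_1 \to F_1$. This yields $\calo[E] \cong \calo[F] \otimes \calo[G]$ together with the decomposition $E_1 = F_1 \oplus G_1$. Because $F$ is the annihilator of $h(E_1)$, every $\xi \in h(E_1)$ restricts to $0$ on $F$, so under $i : E\sp{*} \to \calo[E]$ the image $i(h(E_1))$ lies in the factor $1 \otimes \calo[G]$. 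Consequently $f = i \circ h$ decomposes as in Lemma \ref{one}, with $f_1 = 0 : F_1 \to \calo[F]$ (since $F_1 = \ker h$) and some $f_2 : G_1 \to \calo[G]$.

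The key linear-algebra point, which I would verify next, is that $f_2$ is exactly of the type treated in Corollary \ref{cor.two}. Indeed $h|_{G_1} : G_1 \to h(E_1)$ is an isomorphism because $G_1$ complements $\ker h$, while the restriction of functionals $\xi \mapsto \xi|_G$ maps $h(E_1)$ isomorphically onto $G\sp{*}$ (its kernel consists of functionals vanishing on both $F$ and $G$, hence on $E$, and the dimensions agree). Thus $f_2 = i_G \circ g'$ for an isomorphism $g' : G_1 \to G\sp{*}$, and Corollary \ref{cor.two} gives $H_q \KK_*(\calo[G], G_1, f_2) \cong \CC$ for $q = 0$ and $0$ otherwise, the $q=0$ isomorphism being evaluation at $0 \in G$. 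Applying Lemma \ref{one} we obtain $\KK_*(\calo[E], E_1, f) \cong \KK_*(\calo[F], F_1, 0) \overset{.}{\otimes} \KK_*(\calo[G], G_1, f_2)$, and the K\umlaut{u}nneth formula (as in the proof of Lemma \ref{two}) collapses the second factor, yielding $H_q \KK_*(\calo[E], E_1, f) \cong \calo[F] \otimes \wedge^q F_1$.

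It remains to identify this isomorphism with $res$ and to prove independence of the projection. Restriction $\calo[E] \to \calo[F]$ is exactly evaluation at $0$ on the $\calo[G]$-factor, and $\pi_* : \wedge^* E_1 \to \wedge^* F_1$ is projection onto the degree-zero part of the $\wedge^* G_1$-factor; hence the explicit chain map $\Phi(a \otimes \omega) = (a|_F) \otimes \pi_*(\omega)$ realizes the K\umlaut{u}nneth isomorphism. One checks directly that $\Phi$ is a chain map, since $f(e)|_F = 0$ for every $e$ gives $\Phi \circ \pa = 0$, matching the vanishing differential of the target. Note that $\Phi$ involves only the restriction and the projection $\pi$, never the auxiliary complement $G$, so the quasi-isomorphism statement does not depend on $G$.

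For independence of the projection I would argue as follows. The inclusion $F_1 \hookrightarrow E_1$ induces a canonical chain map $j : \KK_*(\calo[E], F_1, 0) \to \KK_*(\calo[E], E_1, f)$ (it is a chain map because $f$ vanishes on $F_1$), and for any projection-induced $\Phi$ one computes $\Phi \circ j = (a \mapsto a|_F) \otimes \mathrm{id}_{\wedge^q F_1}$, which is manifestly independent of the projection. Since restriction $\calo[E] \to \calo[F]$ is surjective, $\Phi \circ j$ is surjective on homology; as $\Phi$ is an isomorphism, $j$ is surjective on homology, so $\Phi$ is determined by the canonical composite $\Phi \circ j$, hence independent of the projection. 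The main obstacle is the linear-algebra bookkeeping of the second paragraph: correctly matching the annihilator $F$, the kernel $F_1$, and the dualities so that the $G$-factor is precisely the acyclic Koszul complex of Corollary \ref{cor.two}. Once that identification is in place, both the homology computation and the independence argument are formal.
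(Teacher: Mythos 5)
Your proof is correct and follows essentially the same route as the paper's: choose complements, split the Koszul complex as a graded tensor product via Lemma \ref{one}, identify the second factor as the acyclic complex of Corollary \ref{cor.two} (with $R_1 = \CC$), and conclude by the K\"unneth formula, the first factor having zero differential. In fact you supply details the paper's proof leaves implicit --- the verification that $f_2$ comes from an isomorphism $G_1 \to G^*$, the explicit chain-level realization of $res$, and especially the independence of the projection, which the paper asserts but does not argue and which your composition with the canonical map $j : \KK_*(\calo[E], F_1, 0) \to \KK_*(\calo[E], E_1, f)$ settles cleanly.
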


\begin{proof}
Let $F_1\rp$ be the kernel of the chosen projection $E_1 \to F_1$,
which thus gives a direct sum decomposition $E_1 = F_1 \oplus
F_1\rp$. Similarly, let us chose a complement $F\rp$ to $F$ in $E$,
which also gives a direct sum decomposition $E = F \oplus F\rp$. Then
we obtain that $\calo[E] = \calo[F] \otimes \calo[F\rp]$ and that the
map $f$ splits as $f_1 \otimes 1 + 1 \otimes f_2$, $f_1 : F_1 \to \calo[F]$,
$f_2 : F_1\rp \to \calo[F\rp]$, with $f_1 = 0$
and with $f_2$ obtained from a linear isomorphism $F_1\rp \to (F\rp)\sp{*}$.
Then Lemma \ref{one} gives that $ \KK_*(\calo[E], E_1, f) \, \simeq \,
\KK_*(\calo[F], F_1, 0) \overset{.}{\otimes} \KK_*(\calo[F\rp],
F_1\rp, f_2)$. The homology of the last complex is given by Corollary
\ref{cor.two} with $R_1 = \CC$ and is seen to be $\CC$ in dimension
zero and zero in the other dimensions. This gives the desired result
since $\KK_*(\calo[F], F_1, 0)$ has vanishing differentials.
\end{proof}

\begin{corollary}\label{cor.res}
Let $h : E\sp{*} \to E\sp{*}$ be a linear map, such that $h$ induces an
injective map $E\sp{*}/\ker(h) \to E\sp{*}/\ker(h)$   
Let $f := i \circ h : E\sp{*} \to \calo[E]$, where $i : E\sp{*} \to \calo[E]$ is
the canonical inclusion.  Denote by $F \subset E$ the annihilator of
$h(E\sp{*}) \subset E\sp{*}$. Then the restriction morphisms $\calo[E] \to
\calo[F]$ and $E\sp{*} \to F\sp{*}$ define isomorphisms
\begin{equation*}
    res_{\KK} \, : \, H_q \KK_*(\calo[E], E\sp{*}, f) \, \simeq \, H_q
    \KK_*(\calo[F], F\sp{*}, 0) \, = \, \calo[F] \otimes \wedge^q
    F\sp{*} \, = \, \Omega\sp{p}[F] \;.
\end{equation*}
\end{corollary}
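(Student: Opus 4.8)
The plan is to recognize Corollary \ref{cor.res} as the special case $E_1 = E\sp{*}$ of Proposition \ref{prop.res}, and then to use the extra injectivity hypothesis to rewrite the answer in the intrinsic form stated. Applying Proposition \ref{prop.res} directly with $E_1 = E\sp{*}$ and the given $h$, and choosing any projection $E\sp{*} \to \ker(h)$, yields
\[
  H_q \KK_*(\calo[E], E\sp{*}, f) \, \simeq \, \calo[F] \otimes \wedge^q \ker(h),
\]
where $F \subset E$ is the annihilator of $h(E\sp{*})$, since here the subspace $F_1 \subset E_1$ of Proposition \ref{prop.res} is exactly $\ker(h)$. Thus the only thing left to establish is that the restriction map $E\sp{*} \to F\sp{*}$ carries $\ker(h)$ isomorphically onto $F\sp{*}$, so that $\wedge^q \ker(h)$ may be replaced by $\wedge^q F\sp{*}$.

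First I would unwind the hypothesis. Saying that $h$ induces a map on $E\sp{*}/\ker(h)$ at all forces $\ker(h)$ to be $h$-invariant, and injectivity of this induced map says exactly that $\ker(h\sp{2}) = \ker(h)$. In finite dimensions this is equivalent to the Fitting-type splitting $E\sp{*} = \ker(h) \oplus h(E\sp{*})$: if $w = h(v) \in \ker(h) \cap h(E\sp{*})$ then $h\sp{2}(v) = 0$, so $v \in \ker(h\sp{2}) = \ker(h)$ and hence $w = h(v) = 0$, while rank--nullity forces the dimensions to add up. I would then take the projection $E\sp{*} \to \ker(h)$ used in Proposition \ref{prop.res} to be the one along this complement $h(E\sp{*})$.

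Next I would identify the kernel of the restriction map $E\sp{*} \to F\sp{*}$. By definition $F = h(E\sp{*})\sp{\perp}$, so the functionals on $E$ vanishing on $F$ are precisely $F\sp{\perp} = h(E\sp{*})\sp{\perp\perp} = h(E\sp{*})$ by the double-annihilator identity in finite dimensions. Hence restriction $E\sp{*} \to F\sp{*}$ is surjective with kernel exactly $h(E\sp{*})$. Restricting it to the complementary summand $\ker(h)$, the splitting from the previous step gives $\ker(h) \cap h(E\sp{*}) = 0$, so restriction is injective on $\ker(h)$; comparing dimensions, $\dim \ker(h) = \dim E\sp{*} - \dim h(E\sp{*}) = \dim F\sp{*}$, shows it is an isomorphism $\ker(h) \to F\sp{*}$. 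Feeding this identification into the displayed formula, and noting $\calo[F]\otimes \wedge^q F\sp{*} = \Omega^q[F]$, gives the corollary; independence of the isomorphism from the chosen projection is inherited from Proposition \ref{prop.res}.

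The genuinely load-bearing step is the translation of the abstract injectivity hypothesis into the concrete direct-sum decomposition $E\sp{*} = \ker(h) \oplus h(E\sp{*})$; once that is in hand, everything else is standard double-annihilator bookkeeping and a dimension count. I do not expect any obstacle beyond confirming that the restriction map $E\sp{*}\to F\sp{*}$ and the projection along $h(E\sp{*})$ furnish the same identification of $\ker(h)$ with $F\sp{*}$ that underlies the isomorphism of Proposition \ref{prop.res}.
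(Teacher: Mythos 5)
Your proof is correct and follows the same route as the paper's: apply Proposition \ref{prop.res} with $E_1 = E\sp{*}$, use the injectivity hypothesis to obtain the splitting $E\sp{*} = \ker(h) \oplus h(E\sp{*})$, take the projection along $h(E\sp{*})$, and identify $\ker(h) \cong F\sp{*}$ via restriction using the double-annihilator identity. The paper's own proof is just a terser version of yours, asserting without detail the Fitting-type splitting and the identification $E\sp{*}/h(E\sp{*}) \simeq F\sp{*}$ that you verify explicitly.
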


\begin{proof} 
This follows from Proposition \ref{prop.res} for $E_1 = E\sp{*}$ as
follows.  First of all, by assumption, we have that $F_1 : = \ker(h :
E\sp{*} \to E\sp{*})$ has $h(E\sp{*})$ as complement. We shall
therefore choose the projection $E\sp{*} \to F_1$ to vanish on
$h(E\sp{*})$. But $E\sp{*}/h(E\sp{*}) \simeq F\sp{*}$ naturally, by
restricting linear forms on $E$ to $F$, since $F$ was defined as the 
anihilator of $h(E\sp{*})$. 
Thus $F_1 \cong F\sp{*}$ and the projection $E\sp{*} \to
F_1$ becomes simply the restriction $E\sp{*} \to F\sp{*}$.
\end{proof}

Koszul complexes are relevant for Hochschild homology through the
following lemma that is also well known \cite{LodayQuillen}. We denote
as usual by $S_q$ the group of permutations of the set
$\{1,\ldots,q\}$, $q \ge 1$.

\begin{lemma}\label{three} 
Let $i : E\sp{*}
\to \calo[E]$ be the canonical inclusion and  $\delta : E\sp{*}
\to R^{e} := R \otimes R$ be given by the formula $\delta(\xi) = i(\xi) \otimes 1 -
1 \otimes i(\xi)$. Then the Koszul complex $\KK_* (R^{e}, E\sp{*},
\delta)$ is a resolution of $R$ by projective $R^{e}$-modules.  Fix a
basis $e_1, \ldots, e_n$ of $E\sp{*}$ and define $\kappa_0 :
\KK_*(R\sp{e}, E\sp{*}, \delta) \to (\calb_*\rp(R), b\rp)$ by the
formula
\begin{equation*}
  \kappa_0 (a_1 \otimes a_2 \otimes e_{j_1} \wedge \ldots \wedge
  e_{j_p}) \, := \, \sum_{\sigma \in S_p}\mbox{\rm sign}(\sigma)
  \, a_1 \otimes i(e_{j_{\sigma(1)}}) \otimes \ldots \otimes
  i(e_{j_{\sigma(p)}}) \otimes a_2 \,.
\end{equation*}
Then $\kappa_0$ is a chain map, that is, $\kappa_0 \pa = b\rp
\kappa_0$.
\end{lemma}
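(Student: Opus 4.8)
The statement has two parts, and I would treat them separately.

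For the resolution claim, observe first that each $\KK_p(R^e, E\sp{*}, \delta) = R^e \otimes \wedge^p E\sp{*}$ is a free $R^e$-module (since $\wedge^p E\sp{*}$ is a finite-dimensional vector space), hence projective; so only exactness and the identification of $H_0$ remain. The plan is to recognize this Koszul complex as an instance of the one analyzed in Proposition \ref{prop.res}. Writing $W := E \oplus E$, we have $R^e = \calo[E] \otimes \calo[E] = \calo[W]$ and $W\sp{*} = E\sp{*} \oplus E\sp{*}$. The map $\delta$ factors as $\delta = i_W \circ h$, where $i_W : W\sp{*} \to \calo[W]$ is the canonical inclusion and $h : E\sp{*} \to W\sp{*}$ is the injective linear map $h(\xi) = (\xi, -\xi)$; indeed $i_W(h(\xi))$ is the function $(v,w) \mapsto \xi(v) - \xi(w)$, which is exactly $\delta(\xi)$ viewed in $\calo[W]$. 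Applying Proposition \ref{prop.res} with ambient space $W$ in the role of its $E$, source $E_1 = E\sp{*}$, and this $h$, the kernel $F_1 = \ker h$ is $0$ and the annihilator $F \subset W$ of $h(E\sp{*})$ is the diagonal $\Delta = \{(v,v)\} \cong E$. Hence $H_q \KK_*(R^e, E\sp{*}, \delta) \simeq \calo[\Delta] \otimes \wedge^q (0)$, which is $R$ for $q = 0$ and $0$ for $q > 0$; moreover the degree-zero isomorphism is induced by restriction to $\Delta$, which under $\calo[\Delta] \cong R$ is precisely the multiplication map $R^e \to R$, $a_1 \otimes a_2 \mapsto a_1 a_2$. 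Thus $\KK_*(R^e, E\sp{*}, \delta)$ is a projective resolution of $R$ with the expected augmentation.

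For the chain-map claim I would compute $b\rp \kappa_0$ and $\kappa_0 \pa$ on a generator $a_1 \otimes a_2 \otimes e_{j_1} \wedge \cdots \wedge e_{j_p}$ and compare. Expanding $\kappa_0$ gives the antisymmetrized sum $\sum_{\sigma \in S_p} \sign(\sigma)\, a_1 \otimes i(e_{j_{\sigma(1)}}) \otimes \cdots \otimes i(e_{j_{\sigma(p)}}) \otimes a_2$, and applying $b\rp$ (the $g = 1$ case of Equation \eqref{eq.def.bg}) to each summand produces one ``front'' term $a_1 i(e_{j_{\sigma(1)}}) \otimes \cdots$, one ``back'' term $\cdots \otimes i(e_{j_{\sigma(p)}}) a_2$ carrying sign $(-1)^p$, and $p-1$ ``interior'' terms merging adjacent factors $i(e_{j_{\sigma(\ell)}}) i(e_{j_{\sigma(\ell+1)}})$. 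The key point is that the interior terms cancel after summing over $S_p$: pairing $\sigma$ with $\sigma \circ (\ell, \ell+1)$ leaves the merged product unchanged, since $R$ is commutative, but reverses $\sign(\sigma)$, so the contributions cancel in pairs.

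Only the front and back terms survive, and I would then reorganize each by the value $k = \sigma(1)$ (respectively $k = \sigma(p)$): factoring out $i(e_{j_k})$ and summing over permutations of the remaining indices reproduces exactly $\kappa_0$ applied to the corresponding summand of $\pa$, namely the $i(e_{j_k}) a_1 \otimes a_2$ term (respectively the $a_1 \otimes i(e_{j_k}) a_2$ term) of Definition \ref{def.koszul}. The one thing to verify carefully, and essentially the only real work in the proof, is that the signs agree: moving $k$ to the front contributes $(-1)^{k-1}$, matching the sign in $\pa$, while moving $k$ to the back contributes $(-1)^{p-k}$, which combined with the $(-1)^p$ from $b\rp$ gives $(-1)^k$, matching the sign $-(-1)^{k-1}$ of the second family of terms in $\pa$. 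With these sign matches established, $\kappa_0 \pa = b\rp \kappa_0$ follows. The main obstacle is thus purely the sign combinatorics; the conceptual content, namely commutativity forcing the interior cancellation and the annihilator/diagonal identification for the resolution, is straightforward.
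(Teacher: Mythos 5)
Your proof is correct and takes essentially the same route as the paper: for the resolution claim the paper applies Corollary \ref{cor.two} to the splitting of $R^e \simeq \calo[E \oplus E]$ into the subalgebra generated by the image of $\delta$ and the diagonal subalgebra generated by the elements $i(\xi)\otimes 1 + 1 \otimes i(\xi)$, which is exactly the diagonal/anti-diagonal decomposition that your invocation of Proposition \ref{prop.res} performs internally. For the chain-map identity the paper simply records that it is a direct calculation; your sign bookkeeping (interior terms cancelling in pairs by commutativity of $R$, front and back terms reassembling into $\kappa_0 \partial$ with signs $(-1)^{k-1}$ and $(-1)^k$) is a correct execution of that calculation.
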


We provide a proof in our setting, for the benefit of the reader.

\begin{proof} 
Recall that $R$ is commutative, so $R^{\opp} = R$, and hence the
notation $R^{e} = R \otimes R$ is justified. The Koszul complex 
$\KK_* (R^{e}, E\sp{*}, \delta)$ is a resolution of $R$ by projective
$R^{e}$-modules by Corollary \ref{cor.two} which is used for $R_1 = \calo[E]$
and the decomposition $R^e \simeq R_2 \otimes R_1$, where $R_2$ is
the polynomial ring generated by the image of $\delta$, $R_1$ 
is the polynomial ring generated by vectors of the form $i(\xi) \otimes 1
+ 1 \otimes i(\xi)$, $\xi \in E$, and $R_1 \simeq R_2 \simeq R$. 
The fact that $\kappa_0$ is a
chain map is a direct calculation.
\end{proof}

We then obtain

\begin{corollary}\label{cor.three} 
Let $g : E \to E$ be linear and  $f :=i\circ  (g\sp{*}-1) :
E\sp* \to E\sp* \subset \calo[E]$. Let us denote also by $g$ the
endomorphism of $\calo[E]$ induced by $g$. Using the notation of
Lemma \ref{three}, we have that $\kappa_E : (\KK_*(R, E\sp{*}, f),\pa) \to (\calb_*(R), b_g)$
\begin{equation*}
	\kappa_E(a \otimes e_{j_1}\wedge \ldots \wedge e_{j_p}) \, := \,
	\sum_{\sigma \in S_p}\mbox{\rm sign}(\sigma) \, a \otimes
	i(e_{j_{\sigma(1)}}) \otimes i(e_{j_{\sigma(2)}}) \otimes \ldots \otimes
	i(e_{j_{\sigma(p)}})
\end{equation*}
is a quasi-isomorphism (that is, $\kappa_E \pa = b_g \kappa_E$, and it
induces an isomorphism of homology groups).
\end{corollary}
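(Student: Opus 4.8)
The plan is to deduce this twisted statement from its untwisted counterpart, Lemma~\ref{three}, by applying the functor $R_g \otimes_{R\sp{e}} (-)$ to the resolutions appearing there. Recall that Lemma~\ref{three} provides the free $R\sp{e}$-resolution $\KK_*(R\sp{e}, E\sp{*}, \delta)$ of $R$ together with a chain map $\kappa_0$ from it to the normalized bar resolution $(\calb_*\rp(R), b\rp)$. Both are projective (indeed free) resolutions of $R$ as an $R\sp{e}$-module, and at level zero the formula for $\kappa_0$ reduces to the identity of $R\sp{e}$, so $\kappa_0$ covers the identity of $R$. The essential observation is that tensoring this entire picture with $R_g$ over $R\sp{e}$ reproduces exactly the twisted data of the corollary.

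First I would record the effect of the functor on the two resolutions. By the discussion following Definition~\ref{def.H.twisted}, tensoring $(\calb_*\rp(R), b\rp)$ with $R_g$ over $R\sp{e}$ yields $(\calb_*(R), b_g)$, and under this identification $\kappa_0$ becomes precisely the map $\kappa_E$ of the statement. It then remains to identify $R_g \otimes_{R\sp{e}} \KK_*(R\sp{e}, E\sp{*}, \delta)$. Since $\KK_j(R\sp{e}, E\sp{*}, \delta) = R\sp{e} \otimes \wedge\sp{j} E\sp{*}$ is free over $R\sp{e}$, the tensor product is $R_g \otimes \wedge\sp{j} E\sp{*} = R \otimes \wedge\sp{j} E\sp{*}$, and the only point to verify is that the Koszul differential transforms correctly. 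This comes down to computing the right action of $\delta(\xi) = i(\xi)\otimes 1 - 1\otimes i(\xi)$ on $R_g$: using the module structure $a(a'\otimes a'') = a''a\,g(a')$ and the identity $g(i(\xi)) = i(g\sp{*}\xi)$, which holds because $g$ acts on $\calo[E]$ by pullback and so $i(\xi)\circ g = g\sp{*}\xi$, one finds
\[
 a\cdot\delta(\xi) \, = \, a\,g(i(\xi)) - i(\xi)\,a \, = \, a\,\big(i(g\sp{*}\xi) - i(\xi)\big) \, = \, a\,i\big((g\sp{*}-1)\xi\big) \, = \, a\,f(\xi),
\]
where commutativity of $R$ is used. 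Hence $R_g \otimes_{R\sp{e}} \KK_*(R\sp{e}, E\sp{*}, \delta)$ is exactly the Koszul complex $\KK_*(R, E\sp{*}, f)$ with $f = i\circ(g\sp{*}-1)$.

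With these identifications in place the conclusion is formal. Because both $\KK_*(R\sp{e}, E\sp{*}, \delta)$ and $(\calb_*\rp(R), b\rp)$ resolve $R$ and $\kappa_0$ covers the identity of $R$, the map $\kappa_0$ is a quasi-isomorphism of bounded-below complexes of projective $R\sp{e}$-modules, hence a chain homotopy equivalence admitting an $R\sp{e}$-linear homotopy inverse and $R\sp{e}$-linear contracting homotopies. Since the additive functor $R_g \otimes_{R\sp{e}}(-)$ carries homotopy equivalences to homotopy equivalences, the induced map $\kappa_E : \KK_*(R, E\sp{*}, f) \to (\calb_*(R), b_g)$ is itself a homotopy equivalence, and in particular a chain map satisfying $\kappa_E\,\pa = b_g\,\kappa_E$ and a quasi-isomorphism. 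I expect the one genuinely computational point to be the transformation of the Koszul differential under the tensor product, that is, the verification that the twist by $g$ built into $R_g$ converts the untwisted map $\delta$ into $f = i\circ(g\sp{*}-1)$; once that is settled, everything else is a direct appeal to Lemma~\ref{three} and to the comparison theorem of homological algebra.
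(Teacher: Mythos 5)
Your proposal is correct and follows essentially the same route as the paper: tensor the two resolutions of Lemma \ref{three} with $R_g$ over $R\sp{e}$, identify the resulting complexes with $(\calb_*(R), b_g)$ and $\KK_*(R, E\sp{*}, f)$ respectively, note that $\kappa_0 \otimes_{R\sp{e}} 1_{R_g} = \kappa_E$, and conclude by the comparison theorem for projective resolutions. Your explicit computation that the right action of $\delta(\xi)$ on $R_g$ is multiplication by $f(\xi) = i\bigl((g\sp{*}-1)\xi\bigr)$ is exactly the content the paper compresses into its appeal to ``functoriality of the Koszul complex'' (Remark \ref{rem.functoriality}), so you have simply made that step more explicit.
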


\begin{proof}
Denote $R := \calo[E]$, as above. The groups $\Hd_q(R, g)$ can be
obtained from any projective resolution of  $R$ as  a $R\sp{e}$ module
by tensoring this projective resolution with $R_g$ over $R\sp{e}$, by Proposition
\ref{prop.isom.gt}. We shall use to this end the resolution of Lemma
\ref{three}. Thus $\Hd_q(R, g)$ are the homology groups of the complex
\begin{equation*}
  (\KK_* (R^{e}, E\sp{*}, \delta)\otimes_{R\otimes R} R_g, \pa \otimes 1) \,.
\end{equation*}
By the functoriality of the Koszul complex (Remark
\ref{rem.functoriality}), this tensor product is nothing else but the
Koszul complex $\KK_* (R, E\sp{*}, f)$. Let $\kappa_0$ be as in the statement
of Lemma \ref{three}. We have then that $\kappa_0
\otimes_{R\otimes R} 1_{R_g} = \kappa_E$. Since $\kappa_0$ is a
morphism of projective resolutions by Lemma \ref{three}, it follows
that $\kappa_E$ is a quasi-isomorphism by standard homological algebra.
\end{proof}

We shall need the following lemma, which is a particular
case of Proposition \ref{prop.brylinski}. Some closely related 
results with a Hopf algebra flavor can be found in \cite{Farinati}   
and some applications of it to deformation of algebras are given in \cite{SW}.

\begin{lemma}\label{lemma.brylinski}
We use the notation of Corollary \ref{cor.three} and assume that $g-1:
E/\ker(g-1) \to E/\ker(g-1)$ injective. Let $E\sp{g} := \ker(g-1)$. 
Then the restriction
$\calo[E] \to \calo[E^{g}]$ defines isomorphisms
\begin{equation*}
  res_{\Hd} \, : \, \Hd_q(\calo[E], g) \ \to \ \Hd_q(\calo[E^{g}]) \,,
\end{equation*}
and hence the twisted 
Connes-Hochschild-Kostant-Rosenberg map $\chi_g$ of Equation
\eqref{eq.Hochschild-Kostant-Rosenberg} gives isomorphisms
\begin{equation*}
  \chi_g : \Hd_q(\calo[E], g) \, \cong \, \Omega^q(E^{g}) \;.
\end{equation*}
\end{lemma}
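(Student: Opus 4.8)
The plan is to combine the Koszul resolution machinery from Corollary \ref{cor.three} with the homology computation from Corollary \ref{cor.res}, and then to identify the resulting restriction map with the twisted Hochschild-Kostant-Rosenberg map. First I would set $R := \calo[E]$ and apply Corollary \ref{cor.three}, which provides a quasi-isomorphism $\kappa_E : (\KK_*(R, E\sp{*}, f), \pa) \to (\calb_*(R), b_g)$, where $f = i \circ (g\sp{*}-1)$. Consequently $\Hd_q(\calo[E], g) \simeq H_q \KK_*(\calo[E], E\sp{*}, f)$. The injectivity hypothesis on $g - 1 : E/\ker(g-1) \to E/\ker(g-1)$ dualizes to the hypothesis of Corollary \ref{cor.res} applied to $h := g\sp{*} - 1 : E\sp{*} \to E\sp{*}$; here I would need to check carefully that $g - 1$ inducing an injection on $E/\ker(g-1)$ is equivalent to $g\sp{*} - 1$ inducing an injection on $E\sp{*}/\ker(g\sp{*}-1)$, which follows from the standard duality between the image and coimage of a linear map and its transpose.

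With that verified, Corollary \ref{cor.res} applies directly and yields an isomorphism
\begin{equation*}
  res_{\KK} \, : \, H_q \KK_*(\calo[E], E\sp{*}, f) \, \simeq \, \calo[F] \otimes \wedge^q F\sp{*} \, = \, \Omega^q[F] \,,
\end{equation*}
where $F \subset E$ is the annihilator of $h(E\sp{*}) = (g\sp{*}-1)(E\sp{*})$. The key geometric identification here is that this annihilator $F$ is exactly the fixed-point space $E\sp{g} = \ker(g - 1)$. Indeed, a vector $v \in E$ is fixed by $g$ precisely when $(g-1)v = 0$, and pairing against $E\sp{*}$ shows this is equivalent to $\xi((g-1)v) = 0$ for all $\xi$, i.e. $((g\sp{*}-1)\xi)(v) = 0$ for all $\xi$, which says exactly $v \in \operatorname{ann}((g\sp{*}-1)E\sp{*})$. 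So $F = E\sp{g}$ and $\Omega^q[F] = \Omega^q(E\sp{g})$, giving the first claimed isomorphism $res_{\Hd}$.

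The remaining work, and the step I expect to be the main obstacle, is verifying that the abstract isomorphism $res_{\KK}$ from Corollary \ref{cor.res} coincides with the geometrically meaningful twisted HKR map $\chi_g$ of Equation \eqref{eq.Hochschild-Kostant-Rosenberg}, so that the second displayed isomorphism follows. This requires tracing an element through the composite: starting from a class in $\Hd_q(\calo[E], g)$, lifting it through $\kappa_E$ to the Koszul complex, applying $res_{\KK}$ (restriction of functions $\calo[E] \to \calo[F]$ together with restriction of forms $E\sp{*} \to F\sp{*}$), and comparing with the value $\tfrac{1}{q!} a_0 da_1 \cdots da_q \vert_{E\sp{g}}$. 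The factor $\tfrac{1}{q!}$ and the sign-weighted antisymmetrization $\sum_{\sigma} \operatorname{sign}(\sigma)$ appearing in the definition of $\kappa_E$ are exactly what make the antisymmetric tensor $e_{j_1} \wedge \cdots \wedge e_{j_q}$ correspond to the differential form $de_{j_1} \wedge \cdots \wedge de_{j_q}$ under $\chi_g$; the point is that $i(\xi) \mapsto d\xi$ matches the Koszul contraction with the de Rham differential, as flagged in the remark preceding Lemma \ref{two}. I would carry out this compatibility check at the chain level on representatives of the form $a \otimes e_{j_1} \wedge \cdots \wedge e_{j_q}$, confirming that $\chi_g \circ \kappa_E$ agrees with the composition $\calo[F] \otimes \wedge^q F\sp{*} \hookrightarrow \Omega^q(E\sp{g})$ up to the normalizing constant, which then forces $\chi_g$ itself to be an isomorphism once $res_{\Hd}$ is known to be one. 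This diagram-chasing of explicit chain representatives — keeping the signs, the factorial normalization, and the two restriction maps aligned — is the genuinely delicate part, whereas the structural isomorphisms are immediate from the cited corollaries.
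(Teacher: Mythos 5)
Your proposal is correct and is essentially the paper's own proof: both identify $\Hd_q(\calo[E],g)$ with the homology of the Koszul complex via $\kappa_E$ (Corollary \ref{cor.three}), apply Corollary \ref{cor.res} after observing that the annihilator of $(g\sp{*}-1)(E\sp{*})$ is exactly $E\sp{g}=\ker(g-1)$, and finish with a chain-level compatibility check — your identity $\chi_g \circ \kappa_E = res_{\KK}$ is precisely the paper's square $res_{\calb} \circ \kappa_E = \kappa_{E\sp{g}} \circ res_{\KK}$ composed with the untwisted HKR isomorphism $\chi$, so the two verifications carry the same content. One small repair to your logic: Corollary \ref{cor.res} by itself yields only the abstract Koszul isomorphism, not yet the first claimed statement that the Hochschild-level restriction map $res_{\Hd}$ implements it, and your closing clause has the dependence backwards — your chain identity already forces $\chi_g$ to be an isomorphism outright (since $\kappa_E$ and $res_{\KK}$ are quasi-isomorphisms), after which $res_{\Hd}$ is an isomorphism because of the factorization $\chi_g = \chi \circ res_{\calb}$ and the classical (untwisted) HKR theorem for $\calo[E\sp{g}]$.
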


\begin{proof} 
Let $R = \calo[E]$, as before and $f := g\sp{*}-1$. Corollary \ref{cor.three} gives that
the groups $\Hd_q(\calo[E], g)$ are isomorphic to the homology groups
of the Koszul complex $(\KK_* (R, E\sp{*}, f), \pa)$.  Since the
annihilator of $(g\sp{*} -1)(E\sp{*})$ is $\ker(g-1) =: E\sp{g}$, we
have by Corollary \ref{cor.res} that the homology groups of this
Koszul complex are the same as those of the Koszul complex $(\KK_*
(\calo[E\sp{g}], (E\sp{g})\sp{*}, 0)$, with the isomorphism being
given by restriction from $E$ to $E\sp{g}$, a map that we will denote
$res_{\KK} : \KK_* (\calo[E], E\sp{*}, f) \to \KK_* (\calo[E\sp{g}], (E\sp{g})\sp{*}, 0)$.
 
Let us denote by $res_\calb : (\calb(\calo[E]), b_g) \to
(\calb(\calo[E^{g}]), b)$ the natural map given by restriction. Then
we have that the restriction maps $res_\calb$ and the restriction map
$res_{\KK}$ just defined satisfy $res_\calb \circ \kappa_{E} =
\kappa_{E^{g}} \circ res_{\KK}$, with the maps $\kappa$ as in
Corollary \ref{cor.three}.  That same corollary gives that
$\kappa_{E}$ are $\kappa_{E^{g}}$ quasi-isomorphisms.  Since
$res_{\KK}$ is also a quasi-isomophisms, we obtain that $res_\calb$ is
a quasi-isomorphism as well.  Recall that $res_{\Hd}$ is the map
induced at the level of homology by $res_{\calb}$.  That means that
$res_{\Hd}$ is an isomorphism and hence proves the first half of our
statement.

To prove the last part of our statement, we first notice that the
Connes-Hochschild-Kostant-Rosenberg map is an isomorphism $\chi :
\Hd_q(\calo[E^{g}] ) \to \Omega^q(E^{g})$. Hence so is $\chi_g$, as
the composition $\chi_g := \chi \circ res_{\calb}$.
\end{proof}

\subsection{Completion at a maximal ideal}
For the rest of this paper, we shall use completions extensively. Let
us fix a maximal ideal $\mfk m$ of our base ring $\kk$. We denote by
$\widehat{\kk}_{\mfk m}$ the completion of $\kk$ with respect to $\mfk
m$: $\widehat{\kk}_{\mfk m} := \displaystyle{\lim_{\longleftarrow}}\,
\kk/{\mfk m}\sp{n}\kk$.  For the rest of this subsection, all
completions will be with respect to $\mfk m$, and this will be
stressed by including $\mfk m$ as an index.  For any $\kk$-module $M$,
we shall denote by
\begin{equation*}
 \widehat M_{\mfk m} \, := \, \displaystyle{\lim_{\longleftarrow}}\, M/\mfk m^n M
\end{equation*} 
the completion of $M$ with respect to the topology defined by an ideal
$\mfk m \subset \kk$. We notice that the completion $\widehat
M_{\mfk m}$ is naturally a $\widehat{\kk}_{\mfk m}$-module, and hence
we obtain a natural map $comp: M \otimes_{\kk} \widehat{\kk}_{\mfk m}
\to \widehat M_{\mfk m}$.  
In case $M$ is finitely generated as a
$\kk$-module (always the case in what follows), then $comp: M
\otimes_{\kk} \widehat{\kk}_{\mfk m} \to \widehat M_{\mfk m}$ is an
isomorphism \cite{AtiyahMacDonald, BourbakiAlgComm}, which will be
used as an identification from now on. Thus, in order not to
overburden the notation, we shall drop $comp$ from the notation and
simply identify $\widehat {M}_{\mfk m}$ and $M \otimes_{\kk}
\widehat{\kk}_{\mfk m}$.

\begin{remark}\label{remark.e.f}
We know that every element $x \in \kk \smallsetminus \mfk m$ is
invertible in $\widehat{\kk}_{\mfk m}$ by writing a convergent Neumann
series for the inverse. Therefore, for every $\kk$-module $M$, the
canonical map $M \to \widehat M_{\mfk m}$ factors through $M_{\mfk
  m} := (\kk \smallsetminus {\mfk m})\sp{-1}M$, 
  that is, it is the composition of the canonical maps $M \to
M_{\mfk m} \to \widehat M_{\mfk m}$. In particular, if $M_{\mfk m}
=0$, then $\widehat M_{\mfk m}=0$, since the image of $M_{\mfk m}$ in
$\widehat M_{\mfk m}$ is dense. Moreover, we see that completion
with respect to $\mfk m$ and localization at $\mfk m$ commute, so
there is no danger of confusion in the notation $\widehat M_{\mfk m}$:\
that is, $(\widehat M)_{\mfk m} = \widehat {(M_{\mfk m})}$.
\end{remark}

\begin{remark}\label{remark.e.completion}
If $f : M \to N$ is a morphism of $\kk$-modules, we shall denote by
$\widehat f_{\mfk m} : \widehat M_{\mfk m} \to \widehat N_{\mfk m}$
the induced morphism of the corresponding completions with respect to
the topology defined by the powers of $\mfk m$. If $M$ and $N$ are
finitely generated, we shall make no difference between $\widehat
f_{\mfk m} : \widehat M_{\mfk m} \to \widehat N_{\mfk m}$ and $f
\otimes_{\kk} 1 : M \otimes_{\kk} \widehat{\kk}_{\mfk m} \to N
\otimes_{\kk} \widehat{\kk}_{\mfk m}$ Assume $M$ and $N$ are finitely
generated $\kk$-modules.  Then it is a
standard result in commutative algebra that $f$ is an isomorphism if,
and only if, $\widehat f_{\mfk m}$ is an isomorphism for all maximal
ideals $\mfk m$ of $A$ \cite{AtiyahMacDonald, BourbakiAlgComm}.
\end{remark}

Let $A$ be a finite type $\kk$ algebra. We let 
\begin{equation}\label{eq.def.can}
 can \, :\, \Hd_*(A, g) \otimes_\kk \widehat{\kk}_{\mfk m} \, \cong \, 
 \tHd_*(\widehat A_{\mfk m}, g)
\end{equation}
denote the canonical isomorphism of Theorem
\ref{theorem.completion}. It is obtained from 
the fact that $\tHd_*(\widehat A_{\mfk m}, g)$ is a $\widehat{\kk}_{\mfk m}$-module
using also the natural map 
$\Hd_*(A, g) \to \tHd_*(\widehat A_{\mfk m}, g)$ induced by the
inclusion $A \to \widehat A_{\mfk m}$.
We shall use the map $can$ in the following corollary in the following
setting: $E$ will be a finite dimensional, complex vector space, $\kk
:= \calo[E]$, and $\mfk m$ will be the maximal ideal of $\kk :=
\calo[E]$ corresponding to functions vanishing at 0.  Then we shall
consider, as explained, the filtrations defined by $\mfk m$ and denote
by $\widehat \calo[E]_{\mfk m}$ and $\widehat \calo[E\sp{g}]_{\mfk m}$
the completions of $\calo[E]$ and $\calo[E\sp{g}]$ with respect to the
filtrations defined by the powers of $\mfk m$. 

The following corollary 
is an analog of Lemma \ref{lemma.brylinski} for completed algebras.

\begin{corollary}\label{cor.brylinski}
We use the notation and assumptions of Lemma \ref{lemma.brylinski}, in
particular, $g$ is a linear endomorphism of $E$ such that $g-1:
E/\ker(g-1) \to E/\ker(g-1)$ is injective. Let $\mfk m$ be the maximal
ideal of $\kk := \calo[E]$ corresponding to functions vanishing at 0.
Then the restriction $\widehat{\calo}[E]_{\mfk m} \to
\widehat{\calo}[E^{g}]_{\mfk m}$ defines an isomorphism
\begin{equation*}
 \widehat{res}_{\Hd} \, : \, \tHd_q(\widehat{\calo}[E]_{\mfk m}, g) 
  \ \to \ \tHd_q(\widehat{\calo}[E^{g}]_{\mfk m}) \,.
\end{equation*}
Define $\widehat \chi_g$ by $ \chi_g \circ can = \chi_g
\otimes_{\kk} 1$.  Then we have an isomorphism
\begin{equation*}
  \widehat \chi_g : \tHd_q(\widehat \calo[E]_{\mfk m}, g) \ \cong
  \ \widehat{\Omega}^q(E^{g})_{\mfk m} \, := \,
  \displaystyle{\lim_{\longleftarrow}}\ \Omega^q(E^{g})/{\mfk m}\sp{n}
  \Omega^q(E^{g})\;.
\end{equation*}
\end{corollary}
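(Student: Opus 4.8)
The plan is to deduce Corollary \ref{cor.brylinski} by completing the two isomorphisms of Lemma \ref{lemma.brylinski} at the maximal ideal $\mfk m$. Every group occurring here is a finitely generated module over $\kk = \calo[E]$, and completion at $\mfk m$ is realized by the functor $-\otimes_\kk \widehat{\kk}_{\mfk m}$ through the canonical isomorphism $can$ of Equation \eqref{eq.def.can}. Since a functor carries isomorphisms to isomorphisms, the completed maps will again be isomorphisms; the only genuine work is to match them with the restriction-induced maps named in the statement, i.e. to invoke the naturality of $can$, and to identify $\Omega^q(E^g)\otimes_\kk \widehat{\kk}_{\mfk m}$ with the completed module of forms $\widehat{\Omega}^q(E^g)_{\mfk m}$.

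First I would record that Lemma \ref{lemma.brylinski} gives a $\kk$-linear isomorphism $res_{\Hd} : \Hd_q(\calo[E], g) \to \Hd_q(\calo[E^g])$, where $\kk = \calo[E]$ acts on the target through the restriction homomorphism $\calo[E] \to \calo[E^g]$. That homomorphism intertwines $g$ on $\calo[E]$ with the identity on $\calo[E^g]$, because $g$ fixes $E^g = \ker(g-1)$ pointwise; in particular $\calo[E^g]$ is a finite type $\kk$-algebra (a quotient of $\kk$) on which $g$ acts as the identity. Applying $-\otimes_\kk \widehat{\kk}_{\mfk m}$ yields an isomorphism $res_{\Hd}\otimes_\kk 1$. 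By Theorem \ref{theorem.completion} both $\Hd_q(\calo[E], g)$ and $\Hd_q(\calo[E^g])$ are finitely generated over $\kk$, so the map $can$ identifies the source with $\tHd_q(\widehat{\calo}[E]_{\mfk m}, g)$ and the target with $\tHd_q(\widehat{\calo}[E^g]_{\mfk m})$ (untwisted, since $g$ acts trivially there). Naturality of $can$ with respect to the restriction homomorphism then shows that the transported isomorphism is exactly the map $\widehat{res}_{\Hd}$ induced by $\widehat{\calo}[E]_{\mfk m} \to \widehat{\calo}[E^g]_{\mfk m}$, proving the first assertion.

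For the second assertion I would begin from the isomorphism $\chi_g : \Hd_q(\calo[E], g) \cong \Omega^q(E^g)$ of Lemma \ref{lemma.brylinski} and again apply $-\otimes_\kk \widehat{\kk}_{\mfk m}$, obtaining an isomorphism $\chi_g \otimes_\kk 1$ onto $\Omega^q(E^g)\otimes_\kk \widehat{\kk}_{\mfk m}$. The module $\Omega^q(E^g) = \calo[E^g]\otimes \wedge^q (E^g)^{*}$ is finitely generated over $\kk$, so the $comp$ isomorphism for finitely generated modules recalled before Remark \ref{remark.e.f} gives $\Omega^q(E^g)\otimes_\kk \widehat{\kk}_{\mfk m} \cong \widehat{\Omega}^q(E^g)_{\mfk m}$. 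Precomposing with $can$ on the source and postcomposing with this identification produces precisely the map $\widehat{\chi}_g$ determined by $\widehat{\chi}_g \circ can = \chi_g \otimes_\kk 1$, and it is an isomorphism as a composite of isomorphisms.

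The step I expect to be the main obstacle is the naturality bookkeeping of the second paragraph: one must verify that the purely algebraic isomorphism $res_{\Hd}\otimes_\kk 1$, read through the two instances of $can$, agrees with the map $\widehat{res}_{\Hd}$ genuinely induced by the restriction of completed algebras $\widehat{\calo}[E]_{\mfk m}\to \widehat{\calo}[E^g]_{\mfk m}$. This reduces to the functoriality of $can$ in the algebra argument, which follows from its construction in Equation \eqref{eq.def.can} out of the inclusion $A\to \widehat{A}_{\mfk m}$ and the $\widehat{\kk}_{\mfk m}$-module structure; once that is granted, everything else is formal, resting only on the exactness of completion on finitely generated modules over the Noetherian ring $\kk$.
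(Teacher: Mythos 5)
Your proposal is correct and follows essentially the same route as the paper: complete the two isomorphisms of Lemma \ref{lemma.brylinski} using the $can$ isomorphism of Theorem \ref{theorem.completion}, identify $\Omega^q(E^g)\otimes_\kk \widehat{\kk}_{\mfk m}$ with $\widehat{\Omega}^q(E^g)_{\mfk m}$ by finite generation, and check via naturality of $can$ (the paper does this with the commutative squares built from the maps $nat$ induced by $A \to \widehat{A}_{\mfk m}$) that the transported isomorphisms agree with $\widehat{res}_{\Hd}$ and $\widehat{\chi}_g$. The naturality bookkeeping you flag as the main obstacle is exactly what the paper's two diagrams record, so no gap remains.
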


As the notation suggests, $\widehat \chi_g$ is, up to canonical
identifications, nothing but the extension by continuity (or
completion) of the usual Connes-Hochschild-Kostant-Rosenberg map.

\begin{proof} 
For the first part of the proof, let us denote for any $\kk$-algebra
$A$ by $nat$ the natural map $\Hd_*(A, g) \to \tHd_*(\widehat A, g)$.
Hence $can = nat \otimes_{\kk} 1$, see Equation
\eqref{eq.def.can}. Let us consider then the diagram
\begin{equation}\label{diagram.HH}
\begin{CD}
   \Hd_q(\calo[E], g) @>{ res_{\Hd} }>> \Hd_q(\calo[E\sp{g}])
   \\ @V{nat}VV @V{nat}VV \\ \tHd_q(\widehat{\calo[E]}_{\mfk m}, g)
   @>{\widehat {res}_{\Hd}} >> \tHd_q(\widehat{\calo[E\sp{g}]}_{\mfk
     m})
\end{CD}
\end{equation}
whose arrows are given by the natural morphisms of the corresponding
algebras and hence is commutative. That is, we have the relation $nat
\circ res_{\Hd} = \widehat{res}_{\Hd} \circ nat$, which gives then the
relation $ can \circ ({res}_{\Hd} \otimes_{\kk} 1) =
\widehat{res}_{\Hd} \circ can, $ that is, that the diagram
\begin{equation}\label{diagram1}
\begin{CD}
   \Hd_q(\calo[E], g) \otimes_{\kk} \widehat{\kk}_{\mfk m} @>{
     res_{\Hd} \otimes_{\kk} 1 }>> \Hd_q(\calo[E\sp{g}]) \otimes_{\kk}
   \widehat{\kk}_{\mfk m}\\ @V{can}VV
   @V{can}VV\\ \tHd_q(\widehat{\calo[E]}_{\mfk m}, g) @>{\widehat
     {res}_{\Hd}} >> \tHd_q(\widehat{\calo[E\sp{g}]}_{\mfk m})
\end{CD}
\end{equation}
is commutative. The map ${res}_{\Hd} : \Hd_q(\calo[E], g) \to
\Hd_q(\calo[E^{g}])$ is an isomorphism by Lemma \ref{lemma.brylinski}.
Therefore ${res}_{\Hd} \otimes_{\kk} 1 : \Hd_q(\calo[E], g)
\otimes_\kk \widehat{\kk}_{\mfk m} \to \Hd_q(\calo[E^{g}]) \otimes_\kk
\widehat{\kk}_{\mfk m}$ is an isomorphism as well.  Since the vertical
arrows (that is, the maps $can$)
are isomorphisms, we obtain that $\widehat{res}_{\Hd}$ is an
isomorphism as well.

The fact that $\widehat \chi_g$ is an isomorphism follows from the
commutative diagram
\begin{equation}\label{diagram.chi}
\begin{CD} 
   \Hd_q(\calo[E], g) \otimes_{\kk} \widehat{\kk}_{\mfk m} @>{\chi_g
     \otimes_{\kk} 1 }>> \Omega^q(E^{g}) \otimes_{\kk} \widehat
   \kk_{\mfk m}\\ @V{can}VV @| \\ \tHd_q(\widehat{\calo[E]}_{\mfk m},
   g) @>{\widehat \chi_g} >> \widehat{\Omega}^q(E^{g})_{\mfk m}
\end{CD}
\end{equation}
(that is, $\widehat \chi_g \circ can = \chi_g \otimes_{\kk} 1$) and
the fact that $can$ and $\chi_g$ are isomorphisms.
\end{proof}

In plain terms, one has that $\widehat{\chi}_g$ is an isomorphism since
it is the completion of an isomorphism.

We now come back to the case of a general smooth, complex algebraic
variety $X$. We obtain the following result (due to Brylinski in the
case of the algebra of smooth functions \cite{Brylinski}).  See also
\cite{DolgushevAND}

\begin{proposition}\label{prop.brylinski} \
Let $X$ be a smooth, complex, affine algebraic variety and $g$ an
endomorphism of $\calo[X]$ such that $X\sp{g}$ is also a smooth
algebraic variety and such that, for any fixed point $x \in X\sp{g}$,
$T_xX\sp{g}$ is the kernel of $g_* - 1$ acting on $T_xX$ and $g_* - 1$
induces an injective endomorphism of $T_xX/T_xX\sp{g}$.  Then the
twisted Connes-Hochschild-Kostant-Rosenberg map $\chi_g$ induces
isomorphisms
\begin{equation*}
  \chi_g \, : \, \Hd_q(\calo[X], g) \ \cong \ \Omega^q(X^{g})\;.
\end{equation*}
\end{proposition}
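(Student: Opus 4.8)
The plan is to verify the asserted isomorphism one completion at a time, reducing the global statement to the linear, completed computation already carried out in Corollary \ref{cor.brylinski}. First I observe that both sides are finitely generated $\calo[X]$-modules: the source because $\calo[X]$ is a finite type algebra over itself and $\Hd_q(\calo[X],g)\simeq \Tor_q^{\calo[X]^e}(\calo[X]_g,\calo[X])$ is finitely generated by Theorem \ref{theorem.completion}, and the target because $\Omega^q(X^g)$ is a coherent $\calo[X^g]$-module and $\calo[X]\to\calo[X^g]$ is surjective. Moreover $\chi_g$ is $\calo[X]$-linear, where $\calo[X]$ acts on $\Omega^q(X^g)$ through restriction to $X^g$, since $\chi_g(f a_0\otimes a_1\otimes\cdots\otimes a_n)=(f|_{X^g})\,\chi_g(a_0\otimes\cdots\otimes a_n)$. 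Hence, by Remark \ref{remark.e.completion}, it suffices to show that the completed map $\widehat{(\chi_g)}_{\mfk m}$ is an isomorphism for every maximal ideal $\mfk m$ of $\calo[X]$.

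Next I dispose of the maximal ideals $\mfk m$ that do not lie on the fixed-point set, i.e. those with $g^{-1}(\mfk m)\neq \mfk m$. For such $\mfk m$, Proposition \ref{prop.support} (or Corollary \ref{cor.zero}) gives $\Hd_q(\calo[X],g)_{\mfk m}=0$, while $\Omega^q(X^g)$, being an $\calo[X^g]$-module, is supported on $X^g$ and therefore also localizes to zero at $\mfk m$. By Remark \ref{remark.e.f} both completions vanish, so $\widehat{(\chi_g)}_{\mfk m}$ is trivially an isomorphism.

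It remains to treat a fixed point, that is a maximal ideal $\mfk m$ with $g^{-1}(\mfk m)=\mfk m$, corresponding to $x\in X^g$. Here I would use the smoothness of $X$ to linearize. Choosing formal coordinates at $x$ identifies $\widehat{\calo[X]}_{\mfk m}$ with the completion of $\calo[E]$ at the origin, where $E:=T_xX$; averaging such a coordinate system over the finite cyclic group $\langle g\rangle$ (the case arising from a finite group action, which is the case of interest) produces a $g$-equivariant such identification, under which $g$ acts on $E$ through $g_*=dg_x$. The two hypotheses of the proposition --- that $T_xX^g=\ker(g_*-1)$ and that $g_*-1$ is injective on $T_xX/T_xX^g$ --- are exactly the hypotheses of Lemma \ref{lemma.brylinski} and Corollary \ref{cor.brylinski} for the linear endomorphism $g_*$ of $E$, and under the formal identification the fixed subvariety $X^g$ corresponds to $E^g=\ker(g_*-1)$. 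Since $\chi_g$ is defined by the same Connes--Hochschild--Kostant--Rosenberg formula on both $X$ and its linear model and is natural for the restriction maps, it is carried by this identification to the completed twisted map of Corollary \ref{cor.brylinski}, which is an isomorphism. Therefore $\widehat{(\chi_g)}_{\mfk m}$ is an isomorphism, completing the verification.

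The main obstacle is the fixed-point step, and specifically the construction of a $g$-equivariant formal isomorphism $\widehat{\calo[X]}_{\mfk m}\cong\widehat{\calo[E]}_{\mfk m_0}$ together with the verification that $\chi_g$ is natural under it. The equivariant averaging is clean when $g$ has finite order; the role of the two tangent-space hypotheses is precisely to guarantee that, after this linearization, the data $(E,g_*)$ satisfies the injectivity condition of Corollary \ref{cor.brylinski} and that $E^g$ is the formal model of $X^g$, so that the local computation reduces verbatim to the linear case. All other steps --- the reduction to completions via Remark \ref{remark.e.completion} and the vanishing off $X^g$ --- are formal.
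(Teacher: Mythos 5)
Your global strategy --- reduce via Remark \ref{remark.e.completion} to the completed maps at each maximal ideal $\mfk m$, kill the non-fixed ideals using Corollary \ref{cor.zero} and Remark \ref{remark.e.f}, and compare the completion at a fixed point with the linear model of Corollary \ref{cor.brylinski} --- is exactly the paper's. The gap is in your fixed-point step: you linearize $g$ itself by averaging a formal coordinate system over $\langle g\rangle$, which requires $\langle g\rangle$ to be a finite group, whereas the proposition is stated for an \emph{arbitrary} endomorphism $g$ of $\calo[X]$ subject only to the two tangent-space hypotheses; such a $g$ need not be invertible, let alone of finite order. For a general self-map fixing a point, no $g$-equivariant formal linearization exists (formal normal-form theory: resonances obstruct conjugating a map to its differential), so this step cannot be repaired by a cleverer choice of coordinates; as written, your argument proves only the finite-order case, which you yourself flag.

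The paper gets around this precisely at this point, and the device is worth internalizing. After the (non-equivariant) identification $\widehat{\calo[X]}_{\mfk m}\simeq\widehat{\calo[E]}_{\mfk m}$ coming from smoothness, it treats $\chi_g\sp{E}:(\calb_*(\calo[E]),b_g)\to(\Omega^*(E^{g}),0)$ and $\chi_{Dg}\sp{E}:(\calb_*(\calo[E]),b_{Dg})\to(\Omega^*(E^{g}),0)$ as maps of complexes filtered by the powers of $\mfk m$, where $Dg$ is the differential of $g$ at the fixed point. The key observation is that the associated graded complexes, and the induced maps between them, depend only on $Dg$: the action of $g$ on $\bigoplus_n \mfk m\sp{n}/\mfk m\sp{n+1}$ coincides with that of its linear part. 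Corollary \ref{cor.brylinski} (the same linear input you invoke) shows $\chi_{Dg}\sp{E}$ is a quasi-isomorphism, hence the maps on the associated graded level are quasi-isomorphisms, and Lemma \ref{lemma.qi.complete} then upgrades this to the completed complex for $g$ itself. Replacing your averaging step by this comparison of associated graded complexes yields the full statement.
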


\begin{proof} Recall that, for a finitely generated $\kk$-module
$M$, we identify $\widehat M_{\mfk m}$ with $M \otimes_{\kk} \widehat
  \kk_{\mfk m}$. However, in this proof, it will be
  convenient to work with the tensor product $M \otimes_{\kk} \widehat
  \kk_{\mfk m}$ rather than with the completion $\widehat M_{\mfk m}$,
  for notational simplicity.  Denote $\kk = \calo[X]$ and let $\mfk m$
  be an arbitrary maximal ideal of $\kk$. For the clarity of
    the presentation, we shall write in this proof $\chi_g\sp{X}$
    instead of simply $\chi_g$.  As explained in Remark
  \ref{remark.e.completion}, it is enough to check that the map
\begin{equation}\label{eq.?iso}
  \chi_g\sp{X} \otimes_{\kk} 1\, : \, \Hd_q(\calo[X], g) 
  \otimes_{\kk} \widehat{\kk}_{\mfk m} \ \to \ 
  \Omega^q(X^{g}) \otimes_{\kk} \widehat{\kk}_{\mfk m}\;.
\end{equation}
is an isomorphism (since the maximal ideal $\mfk m$ of $A$ was chosen
arbitrarily). We shall now check that this property is satisfied.

Indeed, if $\mfk m$ is not fixed by $g$ (recall that the maximal
ideals ${\mfk m}$ of $\calo[X]$ and the points of $X$ are in one-to-one
correspondence), then $\chi\sp{X}_g \otimes_{\kk} 1$ is an isomorphism since
both groups in Equation \eqref{eq.?iso} are zero, by Corollary
\ref{cor.zero} and Remark \ref{remark.e.f}.

Let us therefore assume that $\mfk m$ is fixed by $g$
and denote by $E:= ({\mfk m}/{\mfk m}\sp{2})\sp{*}$
the tangent space to $X$ at $\mfk m$. The assumption
that $X$ is smooth at $\mfk m$ means, by definition
\cite{HartshorneBook}, that we have a natural isomorphism $tan:
\widehat \calo[X]_{\mfk m} \simeq \widehat \calo[E]_{\mfk m}$.
Similarly, we have a natural isomorphism
\begin{equation*}
 \Omega^q(X^{g}) \otimes_{\kk} \widehat{\kk}_{\mfk m} \, \simeq \,
 \widehat{\Omega}^q(X^{g})_{\mfk m} \, \simeq \,
 \widehat{\Omega}^q(E^{g})_{\mfk m} \, \simeq \, \Omega^q(E^{g})
 \otimes_{\kk} \widehat{\kk}_{\mfk m} \,,
\end{equation*}
which we shall also denote by $tan$.

Recall that the canonical map
$can : \Hd_q(\calo[X], g) \otimes_{\kk} \widehat{\kk}_{\mfk m} \to
\tHd_q(\widehat{\calo[X]}_{\mfk m}, g)$ is an isomorphism (by Theorem
\ref{theorem.completion}) and consider then the commutative diagram
%
%
\begin{equation}\label{diagram}
\begin{CD}
  \Hd_q(\calo[X], g) \otimes_{\kk} \widehat{\kk}_{\mfk m} 
  @>{\chi_g\sp{X} \otimes_{\kk} 1 }>> 
  \Omega^q(X^{g}) \otimes_{\kk} \widehat{\kk}_{\mfk m} \\
   @V{can}VV @| \\
  \tHd_q(\widehat{\calo[X]}_{\mfk m}, g) @>{\widehat \chi_g\sp{X}}>>
  \widehat{\Omega}^q(X^{g})_{\mfk m}\\
  @V{tan}VV @V{tan}VV \\
  \tHd_q(\widehat{\calo[E]}_{\mfk m}, g) @>{\widehat \chi_g\sp{E}} >>
  \widehat{\Omega}^q(E^{g})_{\mfk m}
\end{CD}
\end{equation}
in which the vertical arrows are isomorphisms as explained. 

In order to complete the proof, it is enough to check that 
the bottom horizontal arrow is also an isomorphism. The problem is
that $g$ is not a linear map. Let us denote by $Dg : T_{\mfk m}E
\to T_{\mfk m}E$ the differential of the map $g$ at $\mfk m$
and consider the chain maps
\begin{equation}\label{eq.Omega}
 \begin{gathered}
  \chi_g\sp{E} : (\calb_*(\calo[E]), b_g)  \, \to \,  
  (\Omega^*(E^{g}), 0) \ \ \mbox{ and}\\
  \chi_{Dg}\sp{E} : (\calb_*(\calo[E]), b_{Dg})  \, \to \, 
  (\Omega^*(E^{g}), 0)
 \end{gathered}
\end{equation}
of filtered complexes (with the filtration defined by the powers
of $\mfk m$). The second map is a quasi-isomorphism by 
Corollary \ref{cor.brylinski}. Recall that each of the three filtered
complexes in Equation \eqref{eq.Omega} gives rise to a spectral sequence
with $E\sp{1}$ term given as the homology of the subquotient 
complexes. We have that the corresponding $E\sp{1}$-terms and maps
between these $E\sp{1}$-terms depend only on $Tg$. 
Since $\chi_{Dg}\sp{E}$ induces a quasi-isomorphism, the result
follows from Lemma \ref{lemma.qi.complete}.
\end{proof}

We note that the assumptions of our proposition are satisfied for $g$
a finite order automorphism.

We have the following analog of a result of \cite{Hubl1}, where from
now on the completions are with respect to an ideal $I$, and hence the
subscript ${\mfk m}$ will be dropped from the notation.

\begin{theorem}\label{thm.deRham} \
Let $A = \calo[X]$ with $X$ a complex, smooth, affine algebraic variety,
$V \subset X$ a subvariety with ideal $I$ and $g$ 
finite order automorphism of
$\calo[X]$ that leaves $V$ invariant. Then  the twisted
Connes-Hochschild-Kostant-Rosenberg map $\chi_g$ induces an
isomorphism
\begin{equation*}
  \chi_g \, : \, \tHd_q(\widehat{\calo[X]}, g) \ \cong
  \ \widehat{\Omega}^q( V^{g} ) \, := \,
  \displaystyle{\lim_{\longleftarrow}}\ \Omega^q(X^{g})/I^n
  \Omega^q(X^{g})\;,
\end{equation*}
where the completions are taken with respect to the powers of $I$. 
\end{theorem}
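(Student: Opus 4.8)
The plan is to derive this statement as the $I$-adic completion of the global isomorphism of Proposition~\ref{prop.brylinski}, using Theorem~\ref{theorem.completion} to interchange completion with twisted Hochschild homology. I would take $\kk = A = \calo[X]$, so that $\calo[X]$ is trivially a finite-type $\kk$-algebra and the defining ideal $I$ of $V$ serves as the ideal $I_0 \subset \kk$; then $\widehat{\kk} = \displaystyle{\lim_{\longleftarrow}}\, \calo[X]/I^n$ is exactly $\widehat{\calo[X]}$. Since $g$ leaves $V$ invariant we have $g^{-1}(I) = I$, so $g$ preserves the $I$-adic filtration and $\tHd_q(\widehat{\calo[X]}, g)$ is defined. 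Because $g$ has finite order, $X^g$ is smooth and the tangent-space hypotheses of Proposition~\ref{prop.brylinski} are met, so $\chi_g : \Hd_q(\calo[X], g) \to \Omega^q(X^g)$ is an isomorphism; in particular $\Hd_q(\calo[X], g) \cong \Omega^q(X^g)$ is finitely generated over $\calo[X^g]$, and hence over $\calo[X]$ via the surjection $\calo[X] \to \calo[X^g]$ induced by the closed embedding $X^g \hookrightarrow X$.

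Next I would apply Theorem~\ref{theorem.completion} with this $\kk$, $A$, and $I_0 = I$, obtaining the natural isomorphism $can : \Hd_q(\calo[X], g) \otimes_{\calo[X]} \widehat{\calo[X]} \to \tHd_q(\widehat{\calo[X]}, g)$ of Equation~\eqref{eq.def.can}. Tensoring the isomorphism $\chi_g$ of Proposition~\ref{prop.brylinski} with $\widehat{\calo[X]}$ over $\calo[X]$ produces an isomorphism $\chi_g \otimes 1 : \Hd_q(\calo[X], g) \otimes_{\calo[X]} \widehat{\calo[X]} \to \Omega^q(X^g) \otimes_{\calo[X]} \widehat{\calo[X]}$. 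Finally, since $\Omega^q(X^g)$ is finitely generated over the Noetherian ring $\calo[X]$, the standard comparison between extension of scalars to the completion and $I$-adic completion (recalled in the subsection on completion at a maximal ideal; see \cite{AtiyahMacDonald, BourbakiAlgComm}) identifies $\Omega^q(X^g) \otimes_{\calo[X]} \widehat{\calo[X]}$ with $\displaystyle{\lim_{\longleftarrow}}\, \Omega^q(X^g)/I^n \Omega^q(X^g) = \widehat{\Omega}^q(V^g)$. Composing $can^{-1}$, $\chi_g \otimes 1$, and this last identification yields the desired isomorphism $\tHd_q(\widehat{\calo[X]}, g) \cong \widehat{\Omega}^q(V^g)$.

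The step I expect to require the most care --- and the only genuine obstacle --- is verifying that this composite is the map induced by the completed Connes-Hochschild-Kostant-Rosenberg map $\chi_g$, and not merely an abstract isomorphism. This is a naturality check: the explicit formula~\eqref{eq.Hochschild-Kostant-Rosenberg} for $\chi_g$ is compatible with the inclusion $\calo[X] \to \widehat{\calo[X]}$, whence $\widehat{\chi}_g \circ can = \chi_g \otimes 1$, exactly as in the commutative diagram~\eqref{diagram.chi} of Corollary~\ref{cor.brylinski}. Once this identity is confirmed, together with the finite-generation input needed to equate the tensor product with the completion, the theorem follows. Both ingredients are routine consequences of the results already established, so the argument is short despite the accumulation of identifications.
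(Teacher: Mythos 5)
Your proposal is correct and follows exactly the paper's own route: the paper proves Theorem~\ref{thm.deRham} by taking the $I$-adic completion of the isomorphism $\chi_g$ of Proposition~\ref{prop.brylinski} and invoking Theorem~\ref{theorem.completion}, which is precisely your argument with $\kk = \calo[X]$ and $I_0 = I$. The extra details you supply (finite generation of $\Omega^q(X^g)$ over $\calo[X]$ to identify the tensor product with the completion, and the naturality check $\widehat{\chi}_g \circ can = \chi_g \otimes 1$ as in diagram~\eqref{diagram.chi}) are exactly the identifications the paper leaves implicit.
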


\begin{proof} 
This follows by taking the $I$-adic completion of the map $\chi_g$ of
Proposition \ref{prop.brylinski} and then using Theorem
\ref{theorem.completion}.
\end{proof}

\subsection{Crossed products}

We now return to cross products. The cyclic homology of crossed
products was extensively studied due to its connections with
non-commutative geometry \cite{ConnesBook}. See \cite{C7, C6, C1, C4,
  hochschildPadic, C2, C3, C5} for a few recent results. The homology
of cross-product algebras was studied recently by Dave in relation to residues
\cite{Shantanu} and by Manin and Marcolli in relation to arithmetic
geometry \cite{Manin1}. In this section, we compute the
homology of $\calo[V]\rtimes \Gamma$, for $V$ an algebraic variety
(not necessarily smooth) and $\Gamma$ a finite group. The idea is to 
reduce to the case when $V$ is the affine space $\CC\sp{n}$ acted upon
linearly by $\Gamma$. In this case the Hochschild and cyclic homology
groups of $\calo[V]\rtimes \Gamma$ were computed in \cite{Farinati, SW},
but we need a slight enhacement of those results, which are provided
by the results of the previous subsection.

Let us therefore fix an affine, complex algebraic variety $V$ and let
$\Gamma$ be a finite group acting by automorphisms on $\calo[V]$. Let
us choose a $\Gamma$-equivariant embedding $V \to X$, where $X$ is a
smooth, affine algebraic variety on which $\Gamma$ acts by regular
maps. Propositions \ref{prop.decomp} and \ref{prop.brylinski} then
give the following result (see also \cite{DolgushevAND, Farinati, SW}).

\begin{proposition}\label{proposition.cross}\
Let $\Gamma$ be a finite group acting on a smooth, complex, affine algebraic
variety $X$ and $\{\gamma_1 , \dots , \gamma_\ell\}$ be a list of
representatives of its conjugacy classes. We denote by $C_j$ the
centralizer of $\gamma_j$ and by $X_j$ be the set of fixed points of
$\gamma_j$. Then
\begin{equation*}
 \Hd_q(\calo[X] \rtimes \Gamma) \ \cong \ \bigoplus_{j=1}^\ell \,
 \Omega^q(X_j)^{C_{j}} ,
\end{equation*}
%
 \begin{equation*}
   \Hc_q(\calo[X] \rtimes \Gamma) \ \cong \ \bigoplus_{j=1}^\ell \,
   \left( \Omega^q(X_j)^{C_{j}}/ d\Omega^{q-1}(X_j)^{C_{j}}\, \oplus
   \, \bigoplus_{k \ge 1} H^{q-2k}( X_j)^{C_{j}} \right) \,,
\end{equation*}
	and 
\begin{equation*}
  \Hp_q(\calo[X] \rtimes \Gamma) \ \cong \ \bigoplus_{j=1}^\ell \,
  \bigoplus_{k \in \ZZ} \, H^{q-2k}(X_j)^{C_{j}} \, .
\end{equation*}
\end{proposition}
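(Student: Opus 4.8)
The plan is to assemble Proposition~\ref{proposition.cross} from the machinery already in place, treating the three homology theories in parallel via the conjugacy-class decomposition. First I would invoke the decomposition of Equation~\eqref{eq.decomp} together with Proposition~\ref{prop.decomp}, which reduces each of the three groups $\Hd_q$, $\Hc_q$, $\Hp_q$ of $\calo[X]\rtimes\Gamma$ to a direct sum over the conjugacy-class representatives $\gamma_j$ of the $C_j$-invariants of the corresponding \emph{twisted} homology groups of $\calo[X]$:
\begin{equation*}
  \Hd_q(\calo[X]\rtimes\Gamma) \,\cong\, \bigoplus_{j=1}^\ell \Hd_q(\calo[X],\gamma_j)^{C_j},
\end{equation*}
and likewise for $\Hc_q$ and $\Hp_q$. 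This is the structural skeleton; everything else is the identification of the individual twisted summands.

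For the Hochschild case the identification of the summand is immediate: since $X$ is smooth and each $\gamma_j$ is a finite-order automorphism, the hypotheses of Proposition~\ref{prop.brylinski} hold (as noted in the remark following its proof, finite order guarantees that $g_*-1$ is injective on $T_xX/T_xX^{\gamma_j}$ and that $X^{\gamma_j}$ is smooth). Hence $\chi_{\gamma_j}$ gives an isomorphism $\Hd_q(\calo[X],\gamma_j)\cong\Omega^q(X_j)$, and taking $C_j$-invariants yields the first displayed formula. The $C_j$-action is compatible with $\chi_{\gamma_j}$ because the Connes--Hochschild--Kostant--Rosenberg map is natural with respect to automorphisms of $X$ fixing $\gamma_j$ up to the centralizer action, so the invariants on the two sides correspond.

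For the cyclic and periodic cyclic summands the plan is to pass from the Hochschild identification to the full mixed complex. Having identified $\chi_{\gamma_j}$ as a quasi-isomorphism from $(\calb_*(\calo[X]),b_{\gamma_j})$ to $(\Omega^*(X_j),0)$, I would upgrade this to a morphism of mixed complexes, tracking the operator $B_{\gamma_j}$ across the isomorphism: on the de~Rham side $B$ becomes (a multiple of) the exterior derivative $d$, so the target mixed complex is $(\Omega^*(X_j),0,d)$. The cyclic and periodic cyclic homology of this mixed complex is computed by the standard $(b,B)$-bicomplex, which for a mixed complex with zero Hochschild differential yields exactly the de~Rham-type answer: the cyclic homology splits as $\Omega^q(X_j)/d\Omega^{q-1}(X_j)$ in the top slot together with the algebraic de~Rham cohomology groups $H^{q-2k}(X_j)$ for $k\ge 1$, while the periodic version collapses to $\bigoplus_{k\in\ZZ}H^{q-2k}(X_j)$. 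Taking $C_j$-invariants throughout (the $C_j$-action commutes with both $b$ and $B$, hence with the whole spectral sequence) produces the two remaining displayed formulas.

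\textbf{The main obstacle} I expect is not any single hard computation but the careful verification that the quasi-isomorphism $\chi_{\gamma_j}$ of Proposition~\ref{prop.brylinski}, which was established only at the level of Hochschild homology, genuinely intertwines the Connes operator $B_{\gamma_j}$ with the de~Rham differential $d$ at the level of the full mixed complex (equivalently, that it is an $S$-map or a morphism of $(b,B)$-bicomplexes). One must check this compatibility $B_{\gamma_j}$-versus-$d$ either directly on the chain level or by a spectral-sequence comparison argument analogous to the one used at the end of the proof of Proposition~\ref{prop.brylinski}; once that compatibility is in hand, the cyclic and periodic statements are formal consequences of the homological algebra of mixed complexes with vanishing Hochschild differential. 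A secondary point requiring care is the exactness of the functor $(-)^{C_j}$ of taking invariants under the finite group $C_j$: because we work over $\CC$ and $C_j$ is finite, $(-)^{C_j}$ is exact (averaging provides a projection), so it commutes with passing to homology and with the direct-sum decomposition, which is what legitimizes moving the invariants past all the identifications above.
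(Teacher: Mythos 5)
Your proposal is correct and follows essentially the same route as the paper's proof: decompose via Proposition \ref{prop.decomp}, identify the twisted Hochschild summands via Proposition \ref{prop.brylinski}, then upgrade $\chi_{\gamma_j}$ to a morphism of mixed complexes $\big(\calb_*(\calo[X])^{C_j}, b_{\gamma_j}, B_{\gamma_j}\big) \to \big(\Omega^*(X_j)^{C_j}, 0, d\big)$, from which the cyclic and periodic formulas follow formally. The compatibility you flag as the main obstacle is precisely the identity $\chi_g B_g = d\chi_g$ that the paper invokes at this same point, so your plan and the paper's argument coincide.
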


\begin{proof}
The result for Hochschild homology follows from Propositions
\ref{prop.decomp} and \ref{prop.brylinski}. Let $g = \gamma_j$, for
some $j$. The result for Hochschild homology, together with the
equation $\chi_g B_g = d \chi_g$, with $d$ the de Rham differential,
then gives the other isomorphisms using a standard argument based on
mixed complexes. Mixed complexes were introduced in
\cite{JonesKassel89}. They are also reviewed in
\cite{KazhdanNistorSchneider, SeibtBook}.

Let us recall now this standard argument based on mixed complexes.
For any $\gamma \in \Gamma$ with centralizer $C_\gamma$, the twisted
Connes-Hochschild-Kostant-Rosenberg map $\chi_\gamma$ defines a map of
mixed complexes
\begin{equation*}
  \chi_\gamma \, : \, \big (\calb_*(\calo[X])^{C_\gamma}, b_\gamma,
  B_\gamma \big ) \to (\Omega^*(X^\gamma)^{C_\gamma}, 0, d) \,,
\end{equation*}
which is an isomorphism in Hochschild homology by Proposition
\ref{prop.brylinski}.  It follows that $\chi_\gamma$ induces an
isomorphism of the corresponding cyclic and periodic cyclic homology
groups. Since the cohomology groups of the de Rham complex
$\Omega\sp{*}(X)$ identify with $H^*(X; \CC)$, the singular cohomology
groups of $X$ with complex coefficients, \cite{EmmanouilCR, Emmanouil,
  HartshorneDR, Hubl1} and these groups vanish for $*$ large.
\end{proof}

Similarly, we obtain

\begin{theorem}\label{theorem.cross}\
Let $\Gamma$ be a finite group acting on a smooth, complex, affine
algebraic variety $X$ and $V \subset X$ be an invariant subvariety.
Let $\{\gamma_1 , \dots , \gamma_\ell\}$ be a list of representatives
of conjugacy classes of $\Gamma$, let $C_j$ be the centralizer of
$\gamma_j$, and let $X_j$ and $V_j$ be the set of fixed points of
$\gamma_j$.  We complete with respect to the powers of the ideal $I
\subset \calo[X]$ defining $V$ in $X$.  Then we have
\begin{equation*}
\begin{gathered}
 \tHd_q(\widehat{\calo[X]} \rtimes \Gamma) \ \cong
 \ \bigoplus_{j=1}^\ell \, \widehat{\Omega}^q(X_j)^{C_{j}} \\
 \tHc_q(\widehat{\calo[X]} \rtimes \Gamma) \ \cong
 \ \bigoplus_{j=1}^\ell\, \left( \, \widehat{\Omega}^q(X_j)^{C_{j}}/
 d\widehat{\Omega}^{q-1}(X_j)^{C_{j}} \ \oplus\ \bigoplus_{k \ge 1} \,
 H^{q-2k}( V^{j}; \CC)^{C_{j}} \, \right)\\
  \tHp_q(\widehat{\calo[X]} \rtimes \Gamma) \ \cong
  \ \bigoplus_{j=1}^\ell \, \bigoplus_{k \in \ZZ} \, H^{q-2k}(V^{j};
  \CC)^{C_{j}} \,.
\end{gathered}
\end{equation*}
\end{theorem}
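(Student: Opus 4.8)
The plan is to follow the proof of Proposition~\ref{proposition.cross} essentially verbatim, replacing Proposition~\ref{prop.brylinski} by its $I$-adic counterpart Theorem~\ref{thm.deRham} and Proposition~\ref{prop.decomp} by its completed version Proposition~\ref{prop.decomp2}. First I would note that the topological Hochschild and cyclic complexes of $\widehat{\calo[X]}\rtimes\Gamma$ split over the conjugacy classes $\conj{\gamma_j}$, so it suffices to identify each $\gamma_j$-component. By Proposition~\ref{prop.decomp2} this component is computed by the $C_j$-invariants of the $\gamma_j$-twisted topological theory of $\widehat{\calo[X]}$; since $C_j$ is finite and we work over $\CC$, averaging makes $(-)^{C_j}$ exact, so it commutes with passage to homology and with the inverse limits defining the completions.

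For the Hochschild statement I would apply Theorem~\ref{thm.deRham} with $g=\gamma_j$, which is a finite order automorphism leaving $V$ invariant (so that $X_j=X^{\gamma_j}$ is smooth), to obtain $\tHd_q(\widehat{\calo[X]},\gamma_j)\cong\widehat\Omega^q(X_j)$, the $I$-adic completion of $\Omega^q(X^{\gamma_j})$. Taking $C_j$-invariants then yields the first isomorphism $\tHd_q(\widehat{\calo[X]}\rtimes\Gamma)_{\gamma_j}\cong\widehat\Omega^q(X_j)^{C_j}$.

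For the cyclic and periodic cyclic statements I would run the mixed-complex argument of Proposition~\ref{proposition.cross} in the completed category. The twisted Connes--Hochschild--Kostant--Rosenberg map furnishes a morphism of mixed complexes
\[
\chi_{\gamma_j}\colon\big(\widehat\calb_*(\widehat{\calo[X]})^{C_j},\,b_{\gamma_j},\,B_{\gamma_j}\big)\longrightarrow\big(\widehat\Omega^*(X_j)^{C_j},\,0,\,d\big),
\]
where the identity $\chi_g B_g=d\chi_g$, an algebraic relation that survives completion, guarantees compatibility with the $B$-operators. By the previous paragraph this map is a quasi-isomorphism on topological Hochschild homology, and the standard SBI/bicomplex machinery for mixed complexes then promotes it to isomorphisms on $\tHc$ and $\tHp$; the delicate passage to the completed periodic complex, where a product rather than a sum appears, is controlled by the completeness hypotheses and the $\lim^1$ exact sequence packaged in Lemma~\ref{lemma.qi.complete}. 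Reading off the homology of the target mixed complex $(\widehat\Omega^*(X_j)^{C_j},0,d)$, whose $b$-differential vanishes, produces the cokernel term $\widehat\Omega^q(X_j)^{C_j}/d\widehat\Omega^{q-1}(X_j)^{C_j}$ together with copies of the cohomology of the completed de Rham complex $(\widehat\Omega^*(X_j),d)$.

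The one genuinely nontrivial input, and the step I expect to be the main obstacle, is the identification of the cohomology of the $I$-adically completed de Rham complex with singular cohomology: $H^*(\widehat\Omega^*(X_j),d)\cong H^*(V_j;\CC)$, where $V_j=V^{\gamma_j}$ is cut out in $X_j=X^{\gamma_j}$ by the restriction of the invariant ideal $I$. This is the formal (algebraic) de Rham comparison theorem, asserting that the de Rham cohomology of the formal completion of $X_j$ along $V_j$ recovers the complex cohomology of $V_j$; it is the completed analog of the statement used in Proposition~\ref{proposition.cross} and is exactly the H\umlaut{u}bl-type result cited just before the theorem. Granting this comparison, and using once more the exactness of $(-)^{C_j}$, the de Rham terms become $H^{q-2k}(V_j;\CC)^{C_j}$ and all three displayed formulas follow.
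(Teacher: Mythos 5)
Your proposal follows the paper's own proof essentially verbatim: the Hochschild statement is obtained from Proposition \ref{prop.decomp2} combined with Theorem \ref{thm.deRham}, and the cyclic and periodic cyclic statements follow by running the mixed-complex argument of Proposition \ref{proposition.cross} in the completed setting, with the formal de Rham comparison $H^*(\widehat{\Omega}^*(X_j),d)\cong H^*(V_j;\CC)$ as the extra ingredient. You have also correctly singled out that comparison (the H\"ubl/Hartshorne-type infinitesimal cohomology result) as the one genuinely nontrivial input, which is precisely the point the paper emphasizes in its own, much terser, proof.
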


\begin{proof}
The result for Hochschild homology follows from Proposition
\ref{prop.decomp2} and Theorem \ref{thm.deRham}.  The rest for is very
similar, except that one has to use also the fact that the de Rham
complex $\widehat{\Omega}^{*}(V) := \displaystyle{\lim_{\leftarrow}} \,
\Omega(X)/ I\sp{k} \Omega(X)$ endowed with the de Rham differential
has cohomology $H\sp{*}_{\rm inf}(V) \simeq H^*(V; \CC)$ 
\cite{EmmanouilCR, Emmanouil, FeiginTsygan, HartshorneDR, Hubl1}.
\end{proof}

This result was obtained in the case of smooth functions in 
\cite{BrylinskiNistor}.
We now come to our main result. Note that in the following theorem we
do not assume the variety $V$ to be smooth and that we recover the
orbifold homology groups. The case when $\Gamma$ is trivial is due to
Feigin and Tsygan \cite{FeiginTsygan} (see also Emmanouil\rp s papers
\cite{EmmanouilCR, Emmanouil} and the references therein) for
the case when $V$ is smooth, see also the paper by Dolgushev and Etinghof
\cite{DolgushevAND}. See also  \cite{BaiLiWang}.

\begin{theorem}\label{theorem.cross2}\
Let $\Gamma$ be a finite group acting on a complex, affine algebraic
variety $V$. We do not assume $V$ to be smooth. 
Let $\{\gamma_1 , \dots , \gamma_\ell\}$ be a list of
representatives of conjugacy classes of $\Gamma$, let $C_j$ be the
centralizer of $\gamma_j$, and let $V_j \subset V$ be the set of fixed
points of $\gamma_j$.  Then
\begin{equation*}
  \Hp_q(\calo[V] \rtimes \Gamma) \ \cong \ \bigoplus_{j=1}^\ell \,
  \bigoplus_{k \in \ZZ} \, H^{q-2k}(V_{j}; \CC)^{C_{j}} \, .
\end{equation*}
\end{theorem}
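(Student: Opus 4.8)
The plan is to reduce the non-smooth case to the already-established smooth case (Theorem \ref{theorem.cross}) via a resolution/embedding argument combined with Goodwillie's theorem. The key obstruction to applying the previous results directly is that $V$ is no longer assumed smooth, so the twisted Hochschild-Kostant-Rosenberg map $\chi_g$ need not be an isomorphism on $\calo[V]$ itself. The standard device to circumvent this is to embed $V$ in a smooth variety and pass to the infinitesimal neighborhood.

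\begin{proof}
We reduce to the smooth case treated in Theorem \ref{theorem.cross}. Choose a $\Gamma$-equivariant closed embedding $V \hookrightarrow X$, where $X$ is a smooth, complex, affine algebraic variety on which $\Gamma$ acts by regular maps; such an embedding exists because $\Gamma$ is finite and $V$ is affine (embed $V$ in some $\CC\sp{n}$ and average the $\Gamma$-action, or equivalently embed using a generating set of the $\Gamma$-module $\calo[V]$). Let $I \subset \calo[X]$ be the $\Gamma$-invariant ideal defining $V$, so that $\calo[V] = \calo[X]/I$ and $\calo[V] \rtimes \Gamma = (\calo[X] \rtimes \Gamma)/(I \rtimes \Gamma)$.

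The crucial point is that periodic cyclic homology is insensitive to the difference between the quotient $\calo[V] \rtimes \Gamma$ and the completion of $\calo[X] \rtimes \Gamma$ along $J := I \rtimes \Gamma$. Indeed, by Seibt's theorem (Theorem \ref{Seibt}), the quotient morphism induces an isomorphism
\begin{equation*}
  \tHp_*(\widehat{\calo[X] \rtimes \Gamma}) \ \cong \ \Hp_*\big((\calo[X] \rtimes \Gamma)/J\big) \ = \ \Hp_*(\calo[V] \rtimes \Gamma),
\end{equation*}
where the completion is taken with respect to the powers of $J$. By Proposition \ref{prop.decomp2}, the completed crossed product $\widehat{\calo[X]} \rtimes \Gamma$ is precisely this $J$-adic completion, so the left-hand side equals $\tHp_*(\widehat{\calo[X]} \rtimes \Gamma)$.

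Now Theorem \ref{theorem.cross} computes this last group explicitly. Since each $\gamma_j$ has finite order (as $\Gamma$ is finite), the hypotheses of Proposition \ref{prop.brylinski}, and hence of Theorem \ref{theorem.cross}, are satisfied, giving
\begin{equation*}
  \tHp_q(\widehat{\calo[X]} \rtimes \Gamma) \ \cong \ \bigoplus_{j=1}^\ell \, \bigoplus_{k \in \ZZ} \, H^{q-2k}(V_j; \CC)^{C_j},
\end{equation*}
where $V_j = X_j \cap V$ is the fixed-point set of $\gamma_j$ on $V$ and the cohomology appearing is the infinitesimal (equivalently, singular) cohomology of the subvariety $V_j$. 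The identification of the completed de Rham cohomology $H\sp{*}_{\rm inf}(V_j)$ with the singular cohomology $H^*(V_j; \CC)$ is exactly the input cited in the proof of Theorem \ref{theorem.cross}, and it holds without any smoothness hypothesis on $V_j$. Combining the two displayed isomorphisms yields the claimed formula for $\Hp_q(\calo[V] \rtimes \Gamma)$.
\end{proof}

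\emph{The main obstacle} I anticipate lies not in the formal reduction but in verifying that the fixed-point sets $V_j$ appearing after completion genuinely coincide with the fixed-point sets of $\gamma_j$ on $V$ and that their infinitesimal cohomology recovers singular cohomology in the singular case; this is where one must invoke the comparison theorem between algebraic infinitesimal cohomology and topological cohomology for possibly singular varieties, rather than any further homological manipulation.
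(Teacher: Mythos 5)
Your proof is correct and follows essentially the same route as the paper's: a $\Gamma$-equivariant embedding of $V$ into a smooth affine variety, Seibt's theorem (Theorem \ref{Seibt}) combined with Proposition \ref{prop.decomp2} to identify $\Hp_*(\calo[V] \rtimes \Gamma)$ with $\tHp_*(\widehat{\calo[X]} \rtimes \Gamma)$, and then an appeal to Theorem \ref{theorem.cross}. The only difference is that the paper additionally introduces the invariant subring $\kk = \calo[E]^{\Gamma}$ and the ideal $I_0 = \kk \cap I$, proving $I^j \subset I_0\calo[E] \subset I$, so as to place the crossed product and its adic topology within the finite-type framework underlying the completion machinery --- a technical verification you bypass by citing Proposition \ref{prop.decomp2} as stated, which is legitimate given the results as formulated in the paper.
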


\begin{proof}
Let us choose a $\Gamma$-equivariant embedding $V \subset E$ into an
affine space. This can be done by first choosing a finite system of
generators $I := \{a_i\}$ of $\calo[V]$ and then replacing it with
$I_\Gamma := \{\gamma a_i\}$, for all $\gamma \in \Gamma$. We let $E$
to be the vector space with basis $(\gamma, a_i)$. Let $I \subset \calo[E]$
be the ideal defining $V$ in $E$.

We let $\kk = \calo[E]^\Gamma = \calo[E/\Gamma]$, then
$\kk$ is a finitely generated complex algebra by Hilbert\rp s finiteness
theorem \cite{Potier}.
Moreover, $\calo[E]$ is a
finitely generated $\kk$-module, and hence $\calo[E] \rtimes \Gamma$
is also a finite type $\kk$-algebra. Let $I_0 := \kk \cap I$.
We also notice that there exists $j$ such that $I^j
\subset I_0 \calo[E] \subset I$. Indeed, this is due to the fact that if
a character $\phi : \calo[E] \to \CC$ vanishes on $I_0$, then it vanishes on $I$
as well, so $I$ and $I_0 \calo[E]$ have the same nilradical. To prove this, let
us consider $\phi : \calo[E] \to \CC$ that vanishes on $I_0$ and let $a \in I$.
Since $a \in I$ and $I$ is $\Gamma$-invariant, 
the polynomial $P(X) := \prod_{\gamma \in \Gamma} (X - \gamma(a))$ has
coefficients in $I$ that are invariant with respect to $\gamma$, so they are
in $I_0$. This gives $\phi(P(X)) = X\sp{n} 
= \prod_{\gamma \in \Gamma} (X - \phi(\gamma(a)))$, with $n$ the number of elements
in $\Gamma$. Therefore $\phi(\gamma(a)) = 0$ for all $\gamma$. In particular,
$\phi(a) = 0$.

The fact that there exists $j$ such that $I^j
\subset I_0 \calo[E] \subset I$ gives that completing with respect to $I
\rtimes \Gamma$ or with respect to $I_0$ has the same effect. Then
$\calo[E] \rtimes \Gamma/I \rtimes \Gamma \simeq \calo[V] \rtimes
\Gamma$.  Therefore $\tHp_q (\calo[E] \rtimes \Gamma) \simeq \Hp_q
(\calo[V] \rtimes \Gamma)$, by Seibt's theorem (Theorem
\ref{Seibt}). The result then follows from Theorem
\ref{theorem.cross}.
\end{proof}

This theorem extends the calculations for the cross-products of the form
$\calc^{\infty}(X) \rtimes \Gamma$, see \cite{BlockGetzler, 
  BrylinskiNistor, Brylinski,  NestCyclicR, FeiginTsygan, 
  NestCyclicZ, nistorInvent90, PongeIII}.  A similar result
exists for orbifolds in the $\calc^\infty$-setting (often the
resulting groups are called ``orbifold cohomology'' groups). It would
be interesting to extend our result to ``algebraic orbifolds''.
Theorem \ref{cor.cross2} now follows right away. Since for a finite
type algebra $A$ and $\Gamma$ finite, $A \rtimes \Gamma$ is again a 
finite type algebra, it would be interesting to compare the result
of Theorem \ref{theorem.cross2} with the spectral sequence of 
\cite{KazhdanNistorSchneider},
which are based on the excision exact sequence in cyclic homology
\cite{CuntzQuillenJAMS, CuntzQuillenInvent, MeyerExcision}. Cyclic cohomology for 
various algebras (among which cross-products play a central
role) has played a role in index theory \cite{
BaumConnes1, 
ConnesBook,
ConnesSkandalis, 
ConnesMoscoviciNovikov, LeschMoscoviciPflaum, PerrotRodsphon, PongeI, 
PongeII}.

We can now complete the proof of one of our main theorems, Theorem \ref{cor.cross2}.

\begin{proof} (of Theorem \ref{cor.cross2}).
Let $I$ be the nilradical of $A$.  Then $A/I \simeq \calo[V]$ and $I$
is nilpotent.  The result then follows from Goodwillie\rp s result,
Theorem \ref{Goodwillie} and Theorem \ref{theorem.cross2}.
\end{proof}

\begin{remark}
The above result is no longer true if we replace $A = \calo[V]$ with
an Azumaya algebra. Indeed, let $A = M_2(\CC)$ and let $\Gamma :=
(\ZZ/2\ZZ)\sp{2}$ act on $A$ by the inner automorphisms induced by the
matrices
\begin{equation*}
 u_1 \, := \, \left [ 
 \begin{array}{cc} 
    \ 0 & 1 \ \\
    \ 1 & 0 \
 \end{array}
 \right ] \quad \mbox{and} \quad
 u_2 \, := \, \left [ 
 \begin{array}{cc} 
    {\ 1 \ }  & {\ 0 \ } \\
    {\ 0 \  } &  -1
 \end{array}
 \right ]  \;.
\end{equation*}
Then $A \rtimes \Gamma \simeq M_4(\CC)$ by the morphism defined by
\begin{equation}
 M_2(\CC) \ni a \to a \otimes 1 \in M_2(\CC) \otimes M_2(\CC) \simeq M_4(\CC)
\end{equation}
and by $u_i \to u_i \otimes u_i$. Therefore the periodic cyclic
homology of $A \rtimes \Gamma$ is concentrated at the identity. 
\end{remark}

This remark is related to a result of P. Green. See the beginning of the 
introduction to \cite{ErpWilliams} for a statement of this result in the form 
we need it (but with $\ZZ_2$ replaced by the unit circle group $S\sp{1} = \TT$).

\section{Affine Weyl groups}
\label{sec.weyl}

We now use the general results developed in previous sections to
determine the periodic cyclic cohomology of the group algebras of
(extended) affine Weyl groups. These results continue the results in
\cite{BaumNistor}, where the corresponding Hecke algebras were
studied. In fact, cyclic and Hochschild homologies behave remarkably
well for the algebras associated to reductive $p$-adic groups
\cite{RG5, BaumNistor, Crisp, KazhdanNistorSchneider, RG1, RG2, RG3, RG4}. 
Some connections with the Langlands program were pointed
out in \cite{langlands}.

An (extended) affine Weyl group is the crossed product $W = X \rtimes
W_0$, where $W_0$ is a finite Weyl groups and $X$ is a sublattice of
the lattice of weights of a complex algebraic group with Weyl group
$W_0$. Then $\CC[W]$, the group algebra of $W$,
satisfies $\CC[W] \cong \calo(X^*) \rtimes W_0$, that is, it is
isomorphic to the crossed product algebra of the ring of regular
functions on $X^* := \Hom(X, \CC^*)$, the dual torus of $X$, by the
action of $W_0$. The cyclic homology groups of 
$\CC[W] \cong \calo(X^*) \rtimes W_0$
thus can be computed in two, dual ways, either using the results of
\cite{BurgheleaGr, ConnesIHES, Karoubi87} on the periodic cyclic
homology of group algebras or using our determination in Theorem
\ref{theorem.cross2}.

As a concrete example, let us compute
the periodic cyclic cohomology of the group algebra $\CC[W]$ of the
extended Weyl group $W := \ZZ^n \rtimes S_n$, the symmetric Weyl group
$S_n$ acting by permutation on the components of $\ZZ^n$.  Denote by
\begin{equation*}
	\Pi(n) \, := \, \{ (\lambda_1,\lambda_2,\ldots,
        \lambda_r),\ \lambda_1 \ge \lambda_2 \ge \ldots \ge \lambda_r
        > 0, \ \sum \lambda_j = n,\ \lambda_i \in \ZZ \, \}
\end{equation*}
the set of partitions of $n$. The set $\Pi(n)$ is in bijective
correspondence with the set of conjugacy classes of $S_n$.  If
$S_\lambda \simeq S_{\lambda_1} \times S_{\lambda_2} \times \ldots
\times S_{\lambda_r}$ denotes the Young (or parabolic) subgroup of
$S_n$ consisting of permutations leaving the first $\sum_{j = 1}^k
\lambda_j$ elements invariant, for all $k$, then
to the partition $\lambda$ there corresponds a conjugacy class in
$S_n$ represented by a permutation $\sigma_\lambda \in S_\lambda$,
which in each factor is the cyclic permutation $(l,l+1, \ldots, m)$ of
maximum length, for suitable integers $l < m$ (more precisely $l =
\lambda_1 + \lambda_2 + \ldots + \lambda_k +1$ and $m = \lambda_1 +
\lambda_2 + \ldots + \lambda_{k +1}$, for a suitable $k$). Let
$\sigma_\lambda$ be that element.  Then the set of fixed points of
$\sigma_\lambda$ acting on $(\CC^*)^n$ is a tours naturally identified
with $(\CC^*)^r$.

The centralizer $(S_n)_{\sigma_\lambda}$ is isomorphic to a
semi-direct product of a permutation group, denoted $Q_\lambda$,
which permutes the equal length cycles of $\sigma_\lambda$, and the
commutative group generated by the cycles of $\sigma_\lambda$. The
action of the centralizer $(S_n)_{\sigma_\lambda}$ on the set
$(X\sp{*})\sp{\sigma_\lambda} \simeq (\CC^*)^r$ of fixed points of
$\sigma_\lambda$ descends to an action of $Q_\lambda$ on that set.
The group $Q_\lambda$ permutes the equal-length cycles of $\sigma_\lambda$
and acts accordingly by permutations on $(\CC^*)^r$. Let
$t(\lambda)$ be the number of permutation subgroups appearing as
factors of $Q_\lambda$. Thus, $t(\lambda)$ is the set of distinct
values in the sequence $\{\lambda_1, \lambda_2, \ldots, \lambda_r\}$.
Then $(\CC^*)^r/Q_\lambda$ is the product of $(\CC^*)^{t(\lambda)}$
and a euclidean space, so $\cohom^*((\CC^*)^r/Q_\lambda) \cong
\cohom^*((\CC^*)^{t(\lambda)})\cong
\cohom^*(\TT^{t(\lambda)})$, with $\TT \simeq S^{1}$ the unit circle..

\begin{theorem}\label{theorem.GLn}\
Let $A = \CC[W]$ be the group algebra of the extended affine Weyl
group $W=\ZZ^n \rtimes S_n$. Then
\begin{equation*}
 \Hp_q(A) \cong \bigoplus_{\lambda \in \Pi(n)}\,
 \bigoplus_{k=0}\sp{[t(\lambda)/2]} \ \cohom^{2k + q}
 \big (\TT^{t(\lambda)} \big )\;, \quad q = 0, 1\; .
\end{equation*}
\end{theorem}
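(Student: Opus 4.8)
The plan is to read off $\Hp_q(A)$ directly from Theorem \ref{theorem.cross2}, applied to the crossed product $\calo((\CC^*)^n)\rtimes S_n \cong \CC[W]$ with $V = (\CC^*)^n$ and $\Gamma = S_n$, and then to evaluate each summand using the description of conjugacy classes, fixed points and centralizers recalled above. Since $V = (\CC^*)^n$ is affine and $S_n$ is finite, Theorem \ref{theorem.cross2} applies and gives
\begin{equation*}
  \Hp_q(A) \ \cong \ \bigoplus_{\lambda \in \Pi(n)} \bigoplus_{k \in \ZZ}
  \cohom^{q-2k}(V_\lambda; \CC)^{C_\lambda},
\end{equation*}
where $V_\lambda$ is the fixed-point set of a representative $\sigma_\lambda$ of the conjugacy class indexed by $\lambda$ and $C_\lambda = (S_n)_{\sigma_\lambda}$ is its centralizer. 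It therefore remains only to compute the invariant cohomology groups $\cohom^{*}(V_\lambda; \CC)^{C_\lambda}$ and to reindex the resulting double sum.

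For the first task I would proceed as follows. As recalled before the statement, $V_\lambda$ is the subtorus $(\CC^*)^r$ of $(\CC^*)^n$ cut out by equating the coordinates inside each cycle of $\sigma_\lambda$, where $r$ is the number of parts of $\lambda$. The centralizer $C_\lambda$ is the semidirect product of the abelian subgroup generated by the cycles of $\sigma_\lambda$ with the group $Q_\lambda \cong \prod_d S_{m_d}$ that permutes the cycles of equal length (the index $d$ ranging over the distinct part-values of $\lambda$, with $m_d$ their multiplicities). The abelian part acts trivially on $V_\lambda$, so the $C_\lambda$-action factors through $Q_\lambda$, which permutes the corresponding coordinates of $(\CC^*)^r$; hence $\cohom^{*}(V_\lambda; \CC)^{C_\lambda} = \cohom^{*}((\CC^*)^r; \CC)^{Q_\lambda}$. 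Because $Q_\lambda$ is finite, the transfer isomorphism identifies these invariants with $\cohom^{*}((\CC^*)^r/Q_\lambda; \CC)$, and, as noted before the statement, the quotient $(\CC^*)^r/Q_\lambda$ decomposes as the product over $d$ of the symmetric products $\operatorname{Sym}^{m_d}(\CC^*) \cong \CC^{m_d-1}\times \CC^*$, hence is homotopy equivalent to $\TT^{t(\lambda)}$. This gives $\cohom^{*}(V_\lambda; \CC)^{C_\lambda} \cong \cohom^{*}(\TT^{t(\lambda)}; \CC)$.

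Substituting this into the displayed decomposition gives $\Hp_q(A) \cong \bigoplus_{\lambda} \bigoplus_{k\in\ZZ} \cohom^{q-2k}(\TT^{t(\lambda)}; \CC)$. Since $\cohom^{j}(\TT^{t(\lambda)}; \CC)$ vanishes unless $0 \le j \le t(\lambda)$, for fixed $\lambda$ the nonzero inner terms are exactly those with $j = q-2k \ge 0$ and $j \le t(\lambda)$; writing $j = 2k+q$ and letting $k$ run over $0 \le k \le [t(\lambda)/2]$ enumerates them (with at most one trailing zero term when $q=1$ and $t(\lambda)$ is even), and the two values $q = 0, 1$ suffice by the $2$-periodicity of $\Hp$. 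This yields the stated formula. The genuine content all sits in the structural results cited above, so I expect the only real care to be needed in the bookkeeping of the reindexing and in matching the $Q_\lambda$-factorization of the quotient to the distinct-part-value count $t(\lambda)$; the sole nonformal input, namely that finite-group invariants of $\cohom^{*}(-;\CC)$ coincide with the cohomology of the quotient, is supplied by the standard transfer argument.
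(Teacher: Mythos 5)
Your proposal is correct and follows essentially the same route as the paper: the paper proves this theorem by applying Theorem \ref{theorem.cross2} to $\CC[W]\cong\calo((\CC^*)^n)\rtimes S_n$ and then invoking exactly the combinatorial description it sets up beforehand (conjugacy classes indexed by $\Pi(n)$, fixed-point tori $(\CC^*)^r$, centralizer acting through $Q_\lambda$, and $(\CC^*)^r/Q_\lambda \simeq \TT^{t(\lambda)}\times\text{euclidean space}$). Your write-up merely makes explicit two points the paper leaves implicit — the transfer identification $H^*((\CC^*)^r;\CC)^{Q_\lambda}\cong H^*((\CC^*)^r/Q_\lambda;\CC)$ and the symmetric-product computation $\operatorname{Sym}^m(\CC^*)\cong\CC^{m-1}\times\CC^*$ justifying the homotopy type of the quotient — both of which are correct.
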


See also \cite{brodzkiPlymenBull02, brodzkiPlymenDoc02} for some
related results. Theorem 3.1 and Lemma 3.2 in \cite{brodzkiPlymenDoc02} extends 
some of these results to Hecke algebras.

\bibliographystyle{amsplain}
\bibliography{cyclic,nistor}

\providecommand{\bysame}{\leavevmode\hbox to3em{\hrulefill}\thinspace}
\providecommand{\MR}{\relax\ifhmode\unskip\space\fi MR }
\providecommand{\MRhref}[2]{%
  \href{http://www.ams.org/mathscinet-getitem?mr=#1}{#2}
}
\providecommand{\href}[2]{#2}
\begin{thebibliography}{10}

\bibitem{C7}
R.~Akbarpour and M.~Khalkhali, \emph{Hopf algebra equivariant cyclic homology
  and cyclic homology of crossed product algebras}, J. Reine Angew. Math.
  \textbf{559} (2003), 137--152.

\bibitem{AtiyahMacDonald}
M.~Atiyah and MacDonald, \emph{Introduction to comutative algebra},
  Addison-Wesley, Reading, Mass.-London, 1969.

\bibitem{RG5}
A.-M. Aubert, P.~Baum, R.~Plymen, and M.~Solleveld, \emph{Geometric structure
  in smooth dual and local {L}anglands conjecture}, Jpn. J. Math. \textbf{9}
  (2014), no.~2, 99--136. \MR{3258616}

\bibitem{Baranovsky}
V.~Baranovsky, \emph{Orbifold cohomology as periodic cyclic homology},
  Internat. J. Math. \textbf{14} (2003), no.~8, 791--812. \MR{2013145
  (2005g:14044)}

\bibitem{BaumConnes2}
P.~Baum and A.~Connes, \emph{Chern character for discrete groups}, A f\^ete of
  topology, Academic Press, Boston, MA, 1988, pp.~163--232.

\bibitem{BaumConnes1}
\bysame, \emph{{$K$}-theory for discrete groups}, Operator algebras and
  applications, {V}ol.\ 1, London Math. Soc. Lecture Note Ser., vol. 135,
  Cambridge Univ. Press, Cambridge, 1988, pp.~1--20. \MR{996437 (91i:46083)}

\bibitem{BaumNistor}
P.~Baum and V.~Nistor, \emph{Periodic cyclic homology of {I}wahori-{H}ecke
  algebras}, $K$-Theory \textbf{27} (2002), no.~4, 329--357. \MR{1962907
  (2004b:16011)}

\bibitem{BlockGetzler}
J.~Block, E.~Getzler, and J.D. Jones, \emph{The cyclic homology of crossed
  product algebras. {II}. {T}opological algebras}, J. Reine Angew. Math.
  \textbf{466} (1995), 19--25. \MR{1353312 (96j:19003)}

\bibitem{BohmStefan}
G.~B{\"o}hm and D.~{\c{S}}tefan, \emph{({C}o)cyclic (co)homology of
  bialgebroids: an approach via (co)monads}, Comm. Math. Phys. \textbf{282}
  (2008), no.~1, 239--286.

\bibitem{BourbakiAlgComm}
N.~Bourbaki, \emph{Alg\`ebre {C}ommutative}, Hermann, Paris, 1961.

\bibitem{brodzkiPlymenBull02}
J.~Brodzki and R.~Plymen, \emph{Chern character for the {S}chwartz algebra of
  {$p$}-adic {${\rm GL}(n)$}}, Bull. London Math. Soc. \textbf{34} (2002),
  no.~2, 219--228. \MR{1874250 (2003i:19003)}

\bibitem{brodzkiPlymenDoc02}
\bysame, \emph{Complex structure on the smooth dual of {${\rm GL}(n)$}}, Doc.
  Math. \textbf{7} (2002), 91--112 (electronic). \MR{1911211 (2003i:46083)}

\bibitem{Brylinski}
J.-L. Brylinski, \emph{Cyclic homology and equivariant theories}, Ann. Inst.
  Fourier (Grenoble) \textbf{37} (1987), no.~4, 15--28. \MR{927388 (89j:55008)}

\bibitem{BrylinskiLoc}
J.-L. Brylinski, \emph{Central localization in {H}ochschild homology}, J. Pure
  Appl. Algebra \textbf{57} (1989), 1--4.

\bibitem{BrylinskiNistor}
J.-L. Brylinski and V.~Nistor, \emph{Cyclic cohomology of \'etale groupoids},
  $K$-Theory \textbf{8} (1994), no.~4, 341--365.

\bibitem{BurgheleaGr}
D.~Burghelea, \emph{The cyclic homology of the group rings}, Comment. Math.
  Helv. \textbf{60} (1985), no.~3, 354--365.

\bibitem{BurgheleaComm}
D.~Burghelea and M.~Vigu{\'e}-Poirrier, \emph{Cyclic homology of commutative
  algebras. {I}}, Algebraic topology---rational homotopy ({L}ouvain-la-{N}euve,
  1986), Lecture Notes in Math., vol. 1318, Springer, Berlin, 1988, pp.~51--72.

\bibitem{C6}
G.~Carboni, J.A. Guccione, J.J. Guccione, and C.~Valqui, \emph{Cyclic homology
  of {B}rzezi\'nski's crossed products and of braided {H}opf crossed products},
  Adv. Math. \textbf{231} (2012), no.~6, 3502--3568. \MR{2980507}

\bibitem{Ruan}
Weimin Chen and Yongbin Ruan, \emph{A new cohomology theory of orbifold}, Comm.
  Math. Phys. \textbf{248} (2004), no.~1, 1--31. \MR{2104605 (2005j:57036)}

\bibitem{ConnesIHES}
A.~Connes, \emph{Non commutative differential geometry}, Publ. Math. IHES
  \textbf{62} (1985), 41--144.

\bibitem{ConnesBook}
A.~Connes, \emph{Noncommutative geometry}, Academic Press, San Diego, 1994.

\bibitem{ConnesMoscoviciNovikov}
A.~Connes and H.~Moscovici, \emph{Cyclic cohomology, the {N}ovikov conjecture
  and hyperbolic groups}, Topology \textbf{29} (1990), no.~3, 345--388.

\bibitem{ConnesSkandalis}
A.~Connes and G.~Skandalis, \emph{The longitudinal index theorem for
  foliations}, Publ. Res. Inst. Math. Sci., Kyoto Univ. \textbf{20} (1984),
  no.~6, 1139--1183.

\bibitem{Crisp}
T.~Crisp, \emph{Restriction to compact subgroups in the cyclic homology of
  reductive {$p$}-adic groups}, J. Funct. Anal. \textbf{267} (2014), no.~2,
  477--502. \MR{3210037}

\bibitem{CuntzQuillenJAMS}
J.~Cuntz and D.~Quillen, \emph{Cyclic homology and nonsingularity}, J. Amer.
  Math. Soc. \textbf{8} (1995), no.~2, 373--442. \MR{1303030 (96e:19004)}

\bibitem{CuntzQuillenInvent}
\bysame, \emph{Excision in bivariant periodic cyclic cohomology}, Invent. Math.
  \textbf{127} (1997), no.~1, 67--98.

\bibitem{Shantanu}
S.~Dave, \emph{An equivariant noncommutative residue}, J. Noncommut. Geom.
  \textbf{7} (2013), no.~3, 709--735. \MR{3108693}

\bibitem{DolgushevAND}
V.~Dolgushev and P.~Etingof, \emph{Hochschild cohomology of quantized
  symplectic orbifolds and the {C}hen-{R}uan cohomology}, Int. Math. Res. Not.
  (2005), no.~27, 1657--1688. \MR{2152067 (2006h:53101)}

\bibitem{NestCyclicR}
G.~Elliott, T.~Natsume, and R.~Nest, \emph{Cyclic cohomology for one-parameter
  smooth crossed products}, Acta Math. \textbf{160} (1988), no.~3-4, 285--305.

\bibitem{EmmanouilCR}
I.~Emmanouil, \emph{Cyclic homology and de {R}ham homology of commutative
  algebras}, C. R. Acad. Sci. Paris S\'er. I Math. \textbf{318} (1994), no.~5,
  413--417. \MR{1267818 (94m:19002)}

\bibitem{Emmanouil}
\bysame, \emph{The cyclic homology of affine algebras}, Invent. Math.
  \textbf{121} (1995), no.~1, 1--19. \MR{1345282 (96e:19006)}

\bibitem{Farinati}
M.~Farinati, \emph{Hochschild duality, localization, and smash products}, J.
  Algebra \textbf{284} (2005), no.~1, 415--434.

\bibitem{FeiginTsygan}
B.~Feigin and B.~L. Tsygan, \emph{Additive {$K$}--{T}heory and cristaline
  cohomology}, Funct. Anal. Appl. \textbf{19} (1985), 52--62.

\bibitem{gabriel13}
Olivier Gabriel and Martin Grensing, \emph{Six-term exact sequences for smooth
  generalized crossed products}, J. Noncommut. Geom. \textbf{7} (2013), no.~2,
  499--524.

\bibitem{GelfandManinBook}
S.~Gelfand and Y.~Manin, \emph{Methods of homological algebra}, second ed.,
  Springer Monographs in Mathematics, Springer-Verlag, Berlin, 2003.
  \MR{1950475 (2003m:18001)}

\bibitem{Goodwillie}
T.~Goodwillie, \emph{Cyclic homology, derivations, and the free loopspace},
  Topology \textbf{24} (1985), no.~2, 187--215. \MR{793184 (87c:18009)}

\bibitem{HartshorneDR}
R.~Hartshorne, \emph{On the {D}e {R}ham cohomology of algebraic varieties},
  Inst. Hautes \'Etudes Sci. Publ. Math. (1975), no.~45, 5--99. \MR{0432647 (55
  \#5633)}

\bibitem{HartshorneBook}
\bysame, \emph{Algebraic geometry}, Springer-Verlag, New York-Heidelberg, 1977,
  Graduate Texts in Mathematics, No. 52. \MR{0463157 (57 \#3116)}

\bibitem{BaiLiWang}
Jianxun Hu and Bai-Ling Wang, \emph{Delocalized {C}hern character for stringy
  orbifold {K}-theory}, Trans. Amer. Math. Soc. \textbf{365} (2013), no.~12,
  6309--6341. \MR{3105753}

\bibitem{Hubl0}
R.~H{\"u}bl, \emph{Traces of differential forms and {H}ochschild homology},
  Lecture Notes in Mathematics, vol. 1368, Springer-Verlag, Berlin, 1989.
  \MR{995670 (92a:13010)}

\bibitem{Hubl1}
R.~H\"ubl, \emph{A note on the {H}ochschild homology and cyclic homology of a
  topological algebra}, Manuscripta Math. \textbf{77} (1992), 63--70.

\bibitem{C1}
P.~Jara and D.~{\c{S}}tefan, \emph{Hopf-cyclic homology and relative cyclic
  homology of {H}opf-{G}alois extensions}, Proc. London Math. Soc. (3)
  \textbf{93} (2006), no.~1, 138--174. \MR{2235945 (2007g:16013)}

\bibitem{JonesKassel89}
J.D. Jones and C.~Kassel, \emph{Bivariant cyclic theory}, $K$-Theory \textbf{3}
  (1989), 339--365.

\bibitem{Julg}
P.~Julg, \emph{{$K$}-th\'eorie \'equivariante et produits crois\'es}, C. R.
  Acad. Sci. Paris S\'er. I Math. \textbf{292} (1981), no.~13, 629--632.

\bibitem{Kaledin}
D~Kaledin, \emph{Spectral sequences for cyclic homolog},
  http://arxiv.org/abs/1601.00637.

\bibitem{Karoubi}
M.~Karoubi, \emph{Connexions, courbure et classes characteristiques en
  {K}-theorie algebrique}, Proceedings of the Canadian Mathematical Society,
  vol.~2, 1982, part 1, pp.~19--27.

\bibitem{Karoubi87}
\bysame, \emph{Homologie cyclique et {K}-theorie}, Asterisque \textbf{149}
  (1987), 1--147.

\bibitem{KazhdanNistorSchneider}
D.~Kazhdan, V.~Nistor, and P.~Schneider, \emph{Hochschild and cyclic homology
  of finite type algebras}, Selecta Math. (N.S.) \textbf{4} (1998), no.~2,
  321--359. \MR{1669952 (2000b:16015)}

\bibitem{Potier}
J.~Le~Potier, \emph{Lectures on vector bundles}, Cambridge Studies in Advanced
  Mathematics, vol.~54, Cambridge University Press, Cambridge, 1997, Translated
  by A. Maciocia.

\bibitem{LeschMoscoviciPflaum}
M.~Lesch, H.~Moscovici, and M.~Pflaum, \emph{Connes-{C}hern character for
  manifolds with boundary and eta cochains}, Mem. Amer. Math. Soc. \textbf{220}
  (2012), no.~1036, viii+92.

\bibitem{LodayBook}
J.-L. Loday, \emph{Cyclic homology}, Grundlehren der Mathematischen
  Wissenschaften [Fundamental Principles of Mathematical Sciences], vol. 301,
  Springer-Verlag, Berlin, 1992, Appendix E by Mar{\'{\i}}a O. Ronco.
  \MR{1217970 (94a:19004)}

\bibitem{LodayQuillen}
J.-L. Loday and D.~Quillen, \emph{Cyclic homology and the {L}ie homology of
  matrices}, Comment. Math. Helv. \textbf{59} (1984), 565--591.

\bibitem{Lorenz}
M.~Lorenz, \emph{On the homology of graded algebras}, Comm. Algebra \textbf{20}
  (1992), no.~2, 489--507.

\bibitem{MacLane}
S.~Mac~Lane, \emph{Homology}, Springer-Verlag, Berlin, 1995.

\bibitem{C4}
A.~Makhlouf and D.~{\c{S}}tefan, \emph{Coactions on {H}ochschild homology of
  {H}opf-{G}alois extensions and their coinvariants}, J. Pure Appl. Algebra
  \textbf{214} (2010), no.~9, 1654--1677. \MR{2593691 (2011e:16014)}

\bibitem{ManinNCG}
Y.~Manin, \emph{Topics in noncommutative geometry}, M. B. Porter Lectures,
  Princeton University Press, Princeton, NJ, 1991. \MR{1095783 (92k:58024)}

\bibitem{Manin1}
Y.~Manin and M.~Marcolli, \emph{Holography principle and arithmetic of
  algebraic curves}, Adv. Theor. Math. Phys. \textbf{5} (2001), no.~3,
  617--650. \MR{1898372 (2003b:14031)}

\bibitem{Maszczyk}
T.~Maszczyk and S.~S{\"u}tl{\"u}, \emph{Cyclic homology and quantum orbits},
  SIGMA Symmetry Integrability Geom. Methods Appl. \textbf{11} (2015), Paper
  041, 27.

\bibitem{MeyerExcision}
R.~Meyer, \emph{Excision in {H}ochschild and cyclic homology without continuous
  linear sections}, J. Homotopy Relat. Struct. \textbf{5} (2010), no.~1,
  269--303.

\bibitem{NestCyclicZ}
R.~Nest, \emph{Cyclic cohomology of crossed products with {${\bf Z}$}}, J.
  Funct. Anal. \textbf{80} (1988), no.~2, 235--283.

\bibitem{posthuma06}
N.~Neumaier, M.~J. Pflaum, H.~B. Posthuma, and X.~Tang, \emph{Homology of
  formal deformations of proper \'etale {L}ie groupoids}, J. Reine Angew. Math.
  \textbf{593} (2006), 117--168.

\bibitem{nistorInvent90}
V.~Nistor, \emph{Group cohomology and the cyclic cohomology of crossed
  products}, Invent. Math. \textbf{99} (1990), no.~2, 411--424.

\bibitem{hochschildPadic}
\bysame, \emph{Higher orbital integrals, {S}halika germs, and the {H}ochschild
  homology of {H}ecke algebras}, Int. J. Math. Math. Sci. \textbf{26} (2001),
  no.~3, 129--160. \MR{1840902 (2002e:46065)}

\bibitem{langlands}
\bysame, \emph{A non-commutative geometry approach to the representation theory
  of reductive {$p$}-adic groups: homology of {H}ecke algebras, a survey and
  some new results}, Noncommutative geometry and number theory, Aspects Math.,
  E37, Vieweg, Wiesbaden, 2006, pp.~301--321.

\bibitem{PerrotRodsphon}
D.~Perrot and R.~Rodsphon, \emph{An equivariant index theorem for hypoelliptic
  operators}, http://arxiv.org/abs/1412.5042.

\bibitem{Posthuma2011}
M.~J. Pflaum, H.~B. Posthuma, X.~Tang, and H.-H. Tseng, \emph{Orbifold cup
  products and ring structures on {H}ochschild cohomologies}, Commun. Contemp.
  Math. \textbf{13} (2011), no.~1, 123--182. \MR{2772581 (2012h:55006)}

\bibitem{PongeI}
R.~Ponge and Hang Wang, \emph{Noncommutative geometry and conformal geometry.
  {I.} local index formula and conformal invariants}, ArXiv preprint, to appear
  in Journal of Noncommutative geometry.

\bibitem{PongeII}
\bysame, \emph{Noncommutative geometry and conformal geometry. {II.}
  connes-chern character and the local equivariant index theorem}, ArXiv
  preprint, to appear in Journal of Noncommutative geometry.

\bibitem{PongeIII}
\bysame, \emph{Noncommutative geometry and conformal geometry. {III}.
  {V}afa-{W}itten inequality and {P}oincar\'e duality}, Adv. Math. \textbf{272}
  (2015), 761--819.

\bibitem{SeibtBook}
P.~Seibt, \emph{Cyclic homology of algebras}, World Scientific Publishing Co.,
  Singapore, 1987.

\bibitem{Seibt1}
\bysame, \emph{Local cyclic homology}, {K}-{T}heory \textbf{4} (1990),
  143--155.

\bibitem{SW}
A.~Shepler and S.~Witherspoon, \emph{Group actions on algebras and the graded
  {L}ie structure of {H}ochschild cohomology}, J. Algebra \textbf{351} (2012),
  350--381.

\bibitem{RG1}
M.~Solleveld, \emph{Periodic cyclic homology of reductive {$p$}-adic groups},
  J. Noncommut. Geom. \textbf{3} (2009), no.~4, 501--558. \MR{2539769
  (2010j:19007)}

\bibitem{RG2}
\bysame, \emph{Homology of graded {H}ecke algebras}, J. Algebra \textbf{323}
  (2010), no.~6, 1622--1648. \MR{2588128 (2011d:20008)}

\bibitem{RG3}
\bysame, \emph{On the classification of irreducible representations of affine
  {H}ecke algebras with unequal parameters}, Represent. Theory \textbf{16}
  (2012), 1--87. \MR{2869018}

\bibitem{RG4}
\bysame, \emph{Hochschild homology of affine {H}ecke algebras}, J. Algebra
  \textbf{384} (2013), 1--35. \MR{3045149}

\bibitem{Stefan}
D.~{\c{S}}tefan, \emph{Hochschild cohomology on {H}opf {G}alois extensions}, J.
  Pure Appl. Algebra \textbf{103} (1995), no.~2, 221--233.

\bibitem{Tsygan}
B.~L. Tsygan, \emph{Homology of matrix {L}ie algebras over rings and
  {H}ochschild homology}, Uspekhi Math. Nauk. \textbf{38} (1983), 217--218.

\bibitem{C2}
J.-L. Tu and Ping Xu, \emph{Chern character for twisted {$K$}-theory of
  orbifolds}, Adv. Math. \textbf{207} (2006), no.~2, 455--483.

\bibitem{ErpWilliams}
E.~van Erp and D.~Williams, \emph{Groupoid crossed products of continuous-trace
  {$C^\ast$}-algebras}, J. Operator Theory \textbf{72} (2014), no.~2, 557--576.

\bibitem{C3}
C.~Voigt, \emph{Equivariant periodic cyclic homology}, J. Inst. Math. Jussieu
  \textbf{6} (2007), no.~4, 689--763. \MR{2337312 (2008g:19002)}

\bibitem{C5}
Jiao Zhang and Naihong Hu, \emph{Cyclic homology of strong smash product
  algebras}, J. Reine Angew. Math. \textbf{663} (2012), 177--207. \MR{2889710}

\end{thebibliography}

\end{document}